\documentclass{amsart}

\usepackage{a4wide}
\usepackage{amsmath,amssymb}
\usepackage{amsthm}
\usepackage{aliascnt}
\usepackage{tikz-cd}
\usepackage{multicol}
\usepackage{xcolor}
\usepackage[colorlinks=true,linkcolor=black,citecolor=blue,urlcolor=blue]{hyperref}
\usepackage{cleveref}
\usepackage{quiver}
\usepackage{orcidlink}
\renewcommand{\MR}[1]{}

\theoremstyle{definition}
\newtheorem{definition}{Definition}[section]
\newaliascnt{remark}{definition}
\newtheorem{remark}[remark]{Remark}
\aliascntresetthe{remark}

\theoremstyle{plain}
\newaliascnt{proposition}{definition}
\newtheorem{proposition}[proposition]{Proposition}
\aliascntresetthe{proposition}
\newaliascnt{lemma}{definition}
\newtheorem{lemma}[lemma]{Lemma}
\aliascntresetthe{lemma}
\newaliascnt{corollary}{definition}
\newtheorem{corollary}[corollary]{Corollary}
\aliascntresetthe{corollary}
\newaliascnt{example}{definition}
\newtheorem{example}[example]{Example}
\aliascntresetthe{example}
\newaliascnt{theorem}{definition}
\newtheorem{theorem}[theorem]{Theorem}
\aliascntresetthe{theorem}
\newaliascnt{problem}{definition}
\newtheorem{problem}[problem]{Problem}
\aliascntresetthe{problem}

\crefname{definition}{definition}{definitions}
\Crefname{definition}{Definition}{Definitions}
\crefname{remark}{remark}{remarks}
\Crefname{remark}{Remark}{Remarks}
\crefname{proposition}{proposition}{propositions}
\Crefname{proposition}{Proposition}{Propositions}
\crefname{lemma}{lemma}{lemmas}
\Crefname{lemma}{Lemma}{Lemmas}
\crefname{corollary}{corollary}{corollaries}
\Crefname{corollary}{Corollary}{Corollaries}
\crefname{example}{example}{examples}
\Crefname{example}{Example}{Examples}
\crefname{theorem}{theorem}{theorems}
\Crefname{theorem}{Theorem}{Theorems}
\crefname{problem}{problem}{problems}
\Crefname{problem}{Problem}{Problems}

\begin{document}

\title[Transposed Poisson structures in graded and algebroid geometry]
{Transposed Poisson structures in graded and algebroid geometry}
\author[G.~{La Rosa}]{Gianmarco La Rosa\,\orcidlink{0000-0003-1047-5993}}
\address{Dipartimento di Matematica e Informatica, Universit\`a degli Studi di Palermo, via Archirafi 34, 90123, Palermo, Italy}
\email{gianmarco.larosa@unipa.it}
\subjclass[2020]{17B63, 17B70, 53D17, 17B66, 16W25}
\keywords{Transposed Poisson algebra, Gerstenhaber algebra, Lie--Rinehart algebra, $F$-algebroid, $\frac12$-derivation, Jacobi bracket}
\date{\today}
\raggedbottom
\begin{abstract}
	We ask whether the transposed Leibniz rule
$2z\cdot[x,y]=[z\cdot x,y]+[x,z\cdot y]$ survives the two passages that turn
Poisson algebras into geometry, the shift of degree which produces Gerstenhaber
algebras and the passage from algebras to algebroids. We introduce graded
	transposed Poisson algebras with a bracket of arbitrary degree and show that the
	parity of the degree decides which shape the answer takes. In even degree, over
	a unital base, such brackets are in bijection with homogeneous derivations, and
	on the exterior algebra of a vector bundle in degree zero they are exactly the
	gauge derivations of the bundle. In odd degree the bracket is forced to be the
	product with a single element of that degree, so a non-negatively graded algebra
	carries no non-zero transposed Gerstenhaber bracket. In particular, for a
	Lie--Rinehart algebra $(A,L)$, the Schouten--Rinehart Gerstenhaber algebra
	$\Lambda_A L$ is transposed if and only if the anchor and the bracket of $L$
	both vanish. We then introduce transposed $F$-algebroids and classify those with
	an identity: if the rank is at least two the anchor vanishes and the structure
	is the datum of a derivation of the multiplication, while if the rank is one the
	bundle is trivialised by the identity and the structure is, globally, the Jacobi
	bracket of a single vector field. For the tangent $F$-algebroid of an $F$-manifold $M$ this forces $\dim M=1$. 
\end{abstract}
\maketitle
\enlargethispage{3pt}

{\footnotesize\tableofcontents}

\section{Introduction}

Let $A$ be a commutative associative algebra equipped with a Lie bracket. The
classical Poisson compatibility asks that the adjoint operators act by
derivations of the commutative product,
\[
	[x,yz]=[x,y]z+y[x,z].
\]
Exchanging the roles of the two products in this rule leads to the
\emph{transposed Leibniz rule}
\begin{equation}\label{eq:intro-transposed}
	2z\cdot[x,y]=[z\cdot x,y]+[x,z\cdot y],
\end{equation}
and to the notion of a transposed Poisson algebra, introduced by Bai et al. \cite{Bai2023}. The two compatibilities are genuinely different. Under
\eqref{eq:intro-transposed} the multiplication operators are not derivations of
the Lie bracket but half derivations, and an algebra satisfying both rules has
vanishing mixed products. Nevertheless transposed Poisson algebras retain much
of the algebraic behaviour of Poisson algebras. They are closed under tensor
products, they are Koszul self-dual as an operad, they arise from
Novikov--Poisson algebras, and they satisfy a rich family of identities
\cite{Bai2023}. They admit a bialgebra theory through anti-pre-Lie bialgebras
\cite{Liu2024}, and, by a theorem of Sartayev \cite{Sartayev2024}, they coincide with the
commutative Gelfand--Dorfman algebras and are in particular $F$-manifold
algebras \cite{DamasBeites2024}.

The structure theory has developed rapidly along one line. The multiplication
operators of a transposed Poisson structure are $\tfrac12$-derivations of the
underlying Lie algebra, in the sense of Filippov, so a Lie algebra without
non-trivial $\tfrac12$-derivations admits no transposed Poisson product other
than the zero one \cite{Ferreira2021}. This criterion, whose ancestry lies in
the study of derivations of Lie algebras \cite{Leger1963}, is the engine of the
classification results now available for solvable and perfect Lie algebras
\cite{Kaygorodov2024perfect}, for solvable Lie algebras with filiform nilradical
\cite{Abdurasulov2024} and for Virasoro-type algebras
\cite{Kaygorodov2025VirasoroType}. A survey is \cite{DamasBeites2024}.

This literature is almost entirely algebraic. Poisson algebras, by contrast,
have two standard passages into geometry, and neither has been examined for the
transposed rule.

The first passage is a shift of degree. A Poisson bracket of degree $-1$ on a
graded commutative algebra is a Gerstenhaber bracket \cite{Gerstenhaber1963},
and this is where the Poisson world becomes geometric. The Hochschild cohomology of a regular affine algebra over a perfect field is
such an object \cite{Hochschild1962}, and so is the
exterior algebra $\Lambda_A L$ of any Lie--Rinehart algebra $(A,L)$, with the
Schouten--Rinehart bracket \cite{Rinehart1963,Huebschmann1998}. Multivector
fields with the Schouten--Nijenhuis bracket \cite{Marle1997,Vaisman1994} and the
exterior algebra of a Lie algebroid \cite{Kosmann-Schwarzbach1995} are the two
guiding examples, and the construction which produces a Gerstenhaber algebra out
of a Poisson algebra is classical \cite{Kosmann-Schwarzbach1996}. The transposed
counterpart of this passage has been opened only in the super case, by Abramov
and Liivapuu \cite{Abramov2024}, who introduced transposed Poisson
superalgebras and produced structures on the algebra of differential forms of a
manifold using the Lie derivative along a vector field. Those examples live in
degree zero, and no classification accompanies them.

The second passage is from algebras to algebroids. Poisson manifolds, Jacobi
manifolds and local Lie algebras \cite{Kirillov1976,Marle1991,Vaisman1994} are
all encoded by brackets on sections of a bundle, subject to a Leibniz rule
governed by an anchor. Since transposed Poisson algebras are $F$-manifold
algebras, and since $F$-algebroids are the algebroid-level version of
$F$-manifold algebras \cite{CruzMorales2019,CruzMorales2023}, there is a square
of inclusions whose fourth corner has no name.

\[
	\begin{tikzcd}
		{\text{transposed Poisson algebra}} &&& {\text{F-manifold algebras}} \\
		\\
		{\textbf{?}} &&& {\text{F-algebroids}}
		\arrow[hook, from=1-1, to=1-4]
		\arrow[dotted, tail reversed, from=1-4, to=3-4]
		\arrow[
    hook,
    from=3-1,
    to=3-4,
    shorten <=55pt,
    shorten >=15pt
]
	\end{tikzcd}
\]

The vertical arrow on the right is the passage from an $F$-manifold algebra to
an $F$-algebroid, an $F$-manifold algebra being an $F$-algebroid over a point.
The question mark is the structure which should occupy the corresponding
position for transposed Poisson algebras. The paper is organised around three
questions.

\begin{problem}\label{pb:graded}
Poisson algebras have graded counterparts in every degree, and the passage from
degree $0$ to degree $-1$ turns Poisson algebras into Gerstenhaber algebras.
Does the transposed Leibniz rule admit the same graded counterparts, and what
are they?
\end{problem}

\begin{problem}\label{pb:LR}
When is the canonical Gerstenhaber algebra $(\Lambda_A L,\wedge,[-,-]_{SR})$
associated with the Schouten--Rinehart bracket transposed Gerstenhaber, and
which transposed Gerstenhaber brackets can be placed on the graded commutative
algebra $\Lambda_A L$?
\end{problem}

\begin{problem}\label{pb:algebroid}
Which structure occupies the empty box of the diagram, and what do its objects
look like over a base of positive dimension?
\end{problem}

\Cref{pb:graded} is answered in \Cref{sec:graded}. We define graded transposed
Poisson algebras of arbitrary degree $k$, so that $k=0$ recovers the super case
of \cite{Abramov2024} and $k=-1$ gives a transposed analogue of Gerstenhaber
algebras. The sign appearing in the definition is not a convention, since it is
the only one compatible with graded skew-symmetry, and the parity of $k$ turns
out to decide the shape of the classification, the classifying object itself
depending on $k$. For $k$ even the theory is faithful to the ungraded one, and
over a unital base transposed brackets are in bijection with homogeneous
derivations of degree $k$ (see \Cref{thm:unital-even}). For $k$ odd, the Wronskian
construction breaks down, and the bracket is forced to be
\[
	[x,y]=(-1)^{|y|}(xy)\theta,
	\qquad \theta=[1,1]\in A^{k},
\]
(see \Cref{thm:unital-odd}). A non-negatively graded algebra therefore carries no
non-zero transposed bracket of negative odd degree (see \Cref{cor:rigidity}). The
dividing line is quantified by two identities valid in every degree, whose
correction terms carry the complementary factors $\tfrac12(1-(-1)^k)$ and
$1+(-1)^k$ (see \Cref{prop:master-identity} and \Cref{lem:parity-split}).

\Cref{pb:LR} is answered in \Cref{sec:gerstenhaber}, in the reverse order to the
one in which it is stated. The second half is
\Cref{thm:transposed-gerstenhaber-unital}, which describes all transposed
Gerstenhaber brackets on a unital graded commutative algebra and leaves only the
zero bracket on $\Lambda_A L$. The first half follows. Writing $\Theta$ for the
failure of a Gerstenhaber algebra to satisfy \eqref{eq:intro-transposed}, a
short computation gives
\[
	\Theta(x,y,z)=(-1)^{|x|(|y|-1)}[z,y]\,x+(-1)^{|z|(|x|-1)}[x,z]\,y
\]
(see \Cref{lem:theta-formula}), and evaluating at $x=1$ shows that a unital
Gerstenhaber algebra is transposed precisely when its bracket vanishes (see
\Cref{thm:unital-gerstenhaber-rigid}). Hence, for a Lie--Rinehart algebra
$(A,L)$, the algebra $\Lambda_A L$ is transposed Gerstenhaber if and only if the
anchor and the bracket of $L$ both vanish (see \Cref{thm:LR-complete}). A weaker
obstruction, whose proof stays in degree one and never evaluates at the unit, is
recorded in \Cref{prop:exterior-obstruction}.

The positive counterpart lives in degree zero, and is the content of
\Cref{sec:geometry}. On the algebra of functions of a manifold, transposed
Poisson structures correspond to vector fields, and they are exactly the Jacobi
brackets with vanishing bivector part (see \Cref{prop:transposed-manifold}). On
the exterior algebra of a vector bundle $E$ the same question in degree zero has
the answer $\Gamma(\mathfrak D(E))$, the gauge derivations of $E$ (see
\Cref{thm:degree-zero-bundle}). This classifies the examples of
\cite{Abramov2024} as the case $E=T^*M$ with $\mathfrak D$-section the Lie
derivative, and it shows that in all other degrees nothing survives.

\Cref{pb:algebroid} is answered in \Cref{sec:Falgebroids}. The empty box is
filled by transposed $F$-algebroids, \Cref{def:transposed-F}, which contain
transposed Poisson algebras as the objects over a point and for which the
Hertling--Manin relation need not be imposed (see \Cref{prop:HM-free}). Over a
base of positive dimension they are scarce, and the reason is a single identity,
obtained by inserting $fZ$ into \eqref{eq:intro-transposed},
\[
	a_A(X)(f)\,(Z\cdot_A Y)=a_A(Y)(f)\,(Z\cdot_A X)
\]
(see \Cref{lem:anchor-obstruction}). It forces the multiplication to have rank one
wherever the anchor survives, and then the whole structure follows. A transposed
$F$-algebroid with identity and rank at least two has vanishing anchor and is
the datum of a derivation of the multiplication; one of rank one is trivialised
by the identity and carries the bracket
\[
	[fe,ge]=\bigl(fV(g)-gV(f)\bigr)e
\]
of a single vector field $V$ (see \Cref{thm:rank-one}, \Cref{thm:local-model}
and \Cref{cor:unital-dichotomy}). For the tangent $F$-algebroid of an
$F$-manifold this means $\dim M=1$ (see \Cref{cor:F-manifold}), with no
semisimplicity assumption. The rank one model is again a Jacobi bracket with
vanishing bivector part, so the algebroid level enlarges nothing (see
\Cref{prop:jacobi}).

The three answers are instances of a single phenomenon. Whenever a unit is
present, \eqref{eq:intro-transposed} evaluated at $1$ collapses the bracket onto
the single operator $D=[1,-]$, and the entire structure is the structure of $D$.
On functions $D$ is a vector field. On the exterior algebra of a vector bundle
in degree zero it is a gauge derivation. On an $F$-algebroid it is a gauge
derivation whose symbol is the anchor. The rigidity statements of this paper are
the record of what such a $D$ cannot be, and the classification statements are
the record of what it can. In odd degree $D$ is still there, but it is forced to
be the multiplication by the single odd element $\theta=[1,1]$ rather than a
derivation, so the bracket cannot be of Wronskian type; and since $\theta$ has
to live in degree $k$, nothing survives on a non-negatively graded algebra,
which is why the Gerstenhaber world is empty. On a bundle of rank at least two
there is no room for a symbol, which is why the anchor dies.

\Cref{sec:preliminaries} fixes conventions and recalls transposed Poisson
algebras, graded Poisson and Gerstenhaber brackets, and Lie--Rinehart algebras
with the Schouten--Rinehart bracket. The sections which follow are the ones
announced above, and \Cref{sec:problems} collects the questions left open.
Throughout, the base field has characteristic zero and commutative means
commutative and associative. Every classification below rests on the evaluation
of \eqref{eq:intro-transposed} at a unit, so the results are stated for unital
algebras. What happens without a unit is \Cref{pb:nonunital}.

Throughout, the base field has characteristic zero and commutative means
commutative and associative. Every classification below rests on the evaluation
of \eqref{eq:intro-transposed} at a unit, so the results are stated for unital
algebras; what happens without a unit is \Cref{pb:nonunital}.

\section{Preliminaries}\label{sec:preliminaries}

Throughout the paper, unless otherwise stated, all vector spaces are taken over
a field $\mathbb K$ of characteristic zero. Characteristic zero is used in an
essential way: the transposed Leibniz rule carries a factor $2$, and the
arguments below divide by it repeatedly. By a commutative algebra we shall
always mean a commutative associative algebra, not necessarily unital.

We begin by recalling the notion introduced in \cite{Bai2023}. Let $A$ be a
vector space endowed with two bilinear operations $\cdot$ and $[-,-]$ in $A$.
The triple $(A,\cdot,[-,-])$ is called a \emph{transposed Poisson algebra} if
$(A,\cdot)$ is a commutative algebra, $(A,[-,-])$ is a Lie algebra, and the
following compatibility relation holds:
\begin{equation}\label{eq:transposed-leibniz}
	2z\cdot [x,y]=[z\cdot x,y]+[x,z\cdot y],
	\qquad x,y,z\in A .
\end{equation}
This identity expressed by \Cref{eq:transposed-leibniz} will be referred to as
the \emph{transposed Leibniz rule}. The terminology is justified by comparison
with the usual Poisson identity
\[
	[x,yz]=[x,y]z+y[x,z].
\]
In a Poisson algebra, the adjoint action of the Lie bracket acts by derivations
of the commutative product. In \Cref{eq:transposed-leibniz}, instead,
multiplication by $z$ acts on the Lie bracket as one half of a derivation:
\[
	[M_zx,y]+[x,M_zy]=2M_z[x,y],
\]
where $M_z(x)=zx$. Thus the multiplication operators are not derivations of the
Lie algebra, but $\frac12$-derivations in the sense suggested by the displayed
identity. The basic source of examples is parallel to the classical construction
of Poisson brackets from derivations. Let $(A,\cdot)$ be a commutative algebra
and let $D\in\operatorname{Der}(A)$. Define
\begin{equation}\label{eq:derivation-bracket}
	[x,y]_D=xD(y)-yD(x),
	\qquad x,y\in A .
\end{equation}
Then $(A,\cdot,[-,-]_D)$ is a transposed Poisson algebra. Indeed, the Jacobi
identity for \Cref{eq:derivation-bracket} is the standard one for brackets of
this type, while the compatibility follows immediately:
\[
	\begin{aligned}
		\relax[zx,y]_D+[x,zy]_D
		  & = zxD(y)-yD(zx)+xD(zy)-zyD(x)               \\
		  & = zxD(y)-zyD(x)-xyD(z)+xzD(y)+xyD(z)-zyD(x) \\
		  & =2z\bigl(xD(y)-yD(x)\bigr)                  \\
		  & =2z[x,y]_D .                                
	\end{aligned}
\]
Thus a single derivation produces the transposed analogue of the Poisson bracket
constructed from two commuting derivations. When the commutative algebra is
unital, this construction is exhaustive. More precisely, let $A$ be a unital
commutative algebra. The assignment
\[
	D\longmapsto [-,-]_D,\qquad
	[x,y]_D=xD(y)-yD(x),
\]
gives a bijection between $\operatorname{Der}(A)$ and the set of Lie brackets on
$A$ which make $(A,\cdot,[-,-])$ into a transposed Poisson algebra. The inverse
map sends a transposed Poisson bracket to the derivation $D(x)=[1,x]$. Indeed,
applying \Cref{eq:transposed-leibniz} with the unit gives
\[
	[x,y]=x[1,y]-y[1,x],
\]
and the same identity also implies
\[
	[1,xy]=x[1,y]+y[1,x].
\]
Hence $D=[1,-]$ is a derivation and the bracket is recovered from $D$ by
\Cref{eq:derivation-bracket}. This is the statement of Proposition~2.4 in \cite{Bai2023}.

We record a few identities which will be used later. If $(A,\cdot,[-,-])$ is a
transposed Poisson algebra, then, for all $x,y,z,h,u,v\in A$,
\begin{align}
	x[y,z]+y[z,x]+z[x,y]             & =0, \label{eq:cyclic-basic}         \\
	[h[x,y],z]+[h[y,z],x]+[h[z,x],y] & =0, \label{eq:cyclic-h}             \\
	[hx,[y,z]]+[hy,[z,x]]+[hz,[x,y]] & =0, \label{eq:hom-lie-id}           \\
	[h,x][y,z]+[h,y][z,x]+[h,z][x,y] & =0, \label{eq:farkas-id}            \\
	[xu,yv]+[xv,yu]                  & =2uv[x,y]. \label{eq:mixed-product} 
\end{align}
The first of these is obtained by writing \Cref{eq:transposed-leibniz} for the
three cyclic permutations of $(x,y,z)$ and summing: the six brackets on the
right-hand side cancel in pairs by skew-symmetry. The remaining identities are
consequences of the transposed Leibniz rule and the Jacobi identity, see
\cite[Thm.~2.7]{Bai2023}; they express the rather rigid interaction between the
two operations.

Two simple consequences are worth isolating. First, for every $h\in A$, the
rescaled bracket
\begin{equation}\label{eq:rescaling}
    [x,y]_h=h[x,y]
\end{equation}
again defines a transposed Poisson algebra on the same commutative algebra $A$,
the Jacobi identity being exactly \Cref{eq:cyclic-h}. Second, the linear map
\[
	\varphi_h:A\to A,\qquad \varphi_h(x)=hx,
\]
turns $(A,[-,-])$ into a Hom-Lie algebra, since \Cref{eq:hom-lie-id} is
precisely the Hom-Jacobi identity for $\varphi_h$.

Transposed Poisson algebras are also stable under tensor products. If
$(A_1,\cdot_1,[-,-]_1)$ and $(A_2,\cdot_2,[-,-]_2)$ are transposed Poisson
algebras, then $A_1\otimes A_2$ carries a transposed Poisson structure with
product
\[
	(x_1\otimes x_2)(y_1\otimes y_2)=x_1y_1\otimes x_2y_2
\]
and bracket
\[
	[x_1\otimes x_2,y_1\otimes y_2]
	=
	[x_1,y_1]_1\otimes x_2y_2
	+
	x_1y_1\otimes [x_2,y_2]_2 .
\]
This is formally identical to the usual tensor product bracket for Poisson
algebras, but the verification uses the identities above rather than the
ordinary Leibniz rule.

Finally, the transposed and the ordinary Poisson compatibilities are essentially
orthogonal. Suppose that $(A,\cdot,[-,-])$ is at the same time a Poisson algebra
and a transposed Poisson algebra. Then all mixed products vanish,
\[
	x[y,z]=[xy,z]=0,
	\qquad x,y,z\in A ,
\]
and the converse implication is immediate. This is \cite[Proposition.~2.13]{Bai2023}.
The proof inserts the ordinary Leibniz rule into \Cref{eq:transposed-leibniz} and
then appeals to the cyclic identity \Cref{eq:cyclic-basic}. The intersection of
the two theories is therefore degenerate in a precise sense: outside the
vanishing of all mixed products, the transposed Leibniz rule is not a disguised
form of the usual Poisson rule.

The next section introduces the graded version of \Cref{eq:transposed-leibniz}.
The preceding identities will serve as the ungraded model; the main point will
be to insert the signs forced by a bracket of arbitrary degree.

Now we are going to recall Poisson and Gerstenhaber brackets. To do this, we shall use cohomological grading. If $A=\bigoplus_{i\in\mathbb Z}A^i$ is a
graded vector space and $x\in A^{|x|}$ is homogeneous, its degree will be
denoted by $|x|$. All identities involving signs are understood for homogeneous
elements and are then extended by linearity.

A graded algebra $(A,\cdot)$ is called \emph{graded commutative} if
$xy=(-1)^{|x||y|}yx$. A bilinear bracket $[-,-]$ on $A$ is said to have
\emph{degree} $k$ if $[A^p,A^q]\subseteq A^{p+q+k}$. It is a \emph{graded Lie
bracket} of degree $k$ if
\[
	[x,y]=-(-1)^{(|x|+k)(|y|+k)}[y,x]
\]
and
\[
	[x,[y,z]]
	=
	[[x,y],z]
	+(-1)^{(|x|+k)(|y|+k)}[y,[x,z]] .
\]
Equivalently, the shifted space $A[k]$ is a graded Lie algebra for the induced
bracket of degree zero.

A \emph{graded Poisson algebra of degree $k$} is a graded commutative
associative algebra $A$, together with a graded Lie bracket of degree $k$, such
that the adjoint operators act as graded derivations of the product. Thus, for
homogeneous $x,y,z\in A$,
\begin{equation}\label{eq:graded-poisson-leibniz}
	[x,yz]=[x,y]z+(-1)^{(|x|+k)|y|}y[x,z].
\end{equation}
For $k=0$ this is the usual notion of a graded Poisson algebra. For $k=-1$ one
obtains the convention traditionally used for Gerstenhaber algebras.

A \emph{Gerstenhaber algebra} is therefore a graded commutative algebra
$\mathcal G$ endowed with a bracket $[-,-]$ of degree $-1$ such that
$\mathcal G[1]$ is a graded Lie algebra and
\[
	[x,yz]=[x,y]z+(-1)^{(|x|-1)|y|}y[x,z].
\]
Equivalently, for homogeneous elements,
\[
	[x,y]=-(-1)^{(|x|-1)(|y|-1)}[y,x],
\]
and the bracket satisfies the Jacobi identity with the same shifted signs. This
is the convention used throughout the sequel. We shall repeatedly use the
resulting Leibniz rule in the first argument,
\begin{equation}\label{eq:gerst-leibniz-first}
	[yz,x]=(-1)^{|z|(|x|-1)}[y,x]z+y[z,x],
\end{equation}
which follows from \Cref{eq:graded-poisson-leibniz} and skew-symmetry.

The original example is the Hochschild cohomology of an associative algebra,
with the cup product and the Gerstenhaber bracket \cite{Gerstenhaber1963}. In
geometric situations, the guiding example is the algebra of multivector fields,
with wedge product and Schouten bracket. More generally, if $(A,L)$ is a
Lie--Rinehart algebra (whose definition will be remembered shortly), then the exterior $A$-algebra $\Lambda_A L$, with $L$
placed in degree one, carries a canonical Gerstenhaber bracket extending the Lie
bracket of $L$ and the action of $L$ on $A$ \cite{Huebschmann1998}.

We shall later compare \Cref{eq:graded-poisson-leibniz} with its transposed
analogue. The degree $k=-1$ will be the case of primary interest: it is
precisely there that the transposed rule interacts with Gerstenhaber algebras
and, in particular, with the Gerstenhaber algebra $\Lambda_A L$ attached to a
Lie--Rinehart pair.

In order to recall the definition of Lie--Rinehart algebras, let $R$ be a commutative ring and let $A$ be a commutative $R$-algebra. A
\emph{Lie--Rinehart algebra} over $(R,A)$ is an $R$-Lie algebra $L$, endowed
with an $A$-module structure and an $R$-Lie algebra morphism
\[
	\rho:L\longrightarrow \operatorname{Der}_R(A),
\]
called the \textit{anchor}, such that
\[
	[X,aY]=a[X,Y]+\rho(X)(a)Y,
	\qquad
	\rho(aX)(b)=a\rho(X)(b),
\]
for all $a,b\in A$ and $X,Y\in L$. We shall often write $X(a)$ instead of
$\rho(X)(a)$.

The notion has been introduced several times and under several names, from
Herz's pseudo-algebras of Lie and Palais's Lie rings with operators to
Rinehart's $(R,A)$-Lie algebras \cite{Herz1953_1,Herz1953_2,Palais1961,Rinehart1963}. Its
homological theory is developed in
\cite{Huebschmann1990,Huebschmann1998,Huebschmann1999}, and its smooth
counterpart, the theory of Lie algebroids, in
\cite{Mackenzie2005,CrainicFernandes2011}.

The basic example is obtained from a smooth manifold $M$, by taking
$R=\mathbb R$, $A=C^\infty(M)$ and $L=\mathfrak X(M)$ with the usual bracket of
vector fields and the natural action on functions. In
this case the construction below recovers the Schouten bracket on multivector
fields.

\begin{remark}\label{rmk:unital-convention}
	Following \cite{Rinehart1963,Huebschmann1998}, the base $A$ of a Lie--Rinehart
	algebra is unital and $L$ is a unitary $A$-module. In particular
	$\Lambda^0_AL=A$, the algebra $\Lambda_AL$ is generated in degrees $0$ and $1$,
	and the universal property which extends the bracket of $L$ and the anchor to
	$\Lambda_AL$ is available. We keep this convention throughout, and we return to
	the non-unital situation in \Cref{pb:nonunital}.
\end{remark}

Let $(A,L)$ be a Lie--Rinehart algebra. We place $L$ in degree $1$ and consider
the exterior $A$-algebra
\[
	\Lambda_A L=\bigoplus_{p\geq 0}\Lambda_A^p L.
\]
It is a graded commutative algebra with respect to the wedge product. The bracket of $L$ extends uniquely to a bracket $[-,-]_{SR}$ of degree $-1$ on
$\Lambda_AL$ which is graded antisymmetric, restricts on $\Lambda_A^0L=A$ to the
action of $L$ on $A$, and satisfies the Leibniz rule
\[
	[U,V\wedge W]_{SR}
	=
	[U,V]_{SR}\wedge W
	+(-1)^{(|U|-1)|V|}V\wedge[U,W]_{SR}
\]
for homogeneous $U,V,W\in\Lambda_AL$. Together with the wedge product it makes
$\Lambda_AL$ into a Gerstenhaber algebra, so that $\Lambda_AL[1]$ is a graded Lie
algebra \cite{Huebschmann1998}. On decomposable elements of positive degree the
extension reads
\[
	\begin{aligned}
		  & [X_1\wedge\cdots\wedge X_p, 
		Y_1\wedge\cdots\wedge Y_q]_{SR}     \\
		  & \qquad =                    
		\sum_{i,j}
		(-1)^{i+j}
		[X_i,Y_j]\wedge
		X_1\wedge\cdots\widehat X_i\cdots\wedge X_p
		\wedge
		Y_1\wedge\cdots\widehat Y_j\cdots\wedge Y_q .
	\end{aligned}
\]

We shall use this Gerstenhaber algebra as the main test case for the transposed
condition in degree $-1$. The  $[X,a]_{SR}=X(a)$ and $[a,b]_{SR}=0$
will be especially relevant: they make the anchor visible inside the bracket and
therefore allow the transposed Leibniz rule to impose direct restrictions on the
Lie--Rinehart structure.

\section{Graded transposed Poisson algebras}\label{sec:graded}

We now pass from the ungraded setting to the graded one. A super version of
transposed Poisson algebras was introduced by Abramov and Liivapuu
\cite{Abramov2024}; in their setting the grading is by $\mathbb Z_2$ and the
Lie bracket is even. Here we work with $\mathbb Z$-graded algebras and allow
the bracket to have an arbitrary degree.

Thus, throughout this section, graded means $\mathbb Z$-graded. The super case
is obtained by reducing degrees modulo two when the bracket has even degree.
The case of main interest for us is $k=-1$, where the graded Poisson identity
is the Gerstenhaber Leibniz rule.

\begin{definition}\label{def:graded-transposed}
	Let $A=\bigoplus_{i\in\mathbb Z}A^i$ be a graded vector space. A
	\emph{graded transposed Poisson algebra of degree $k$} is a triple
	$(A,\cdot,[-,-])$ such that $(A,\cdot)$ is a graded commutative associative
	algebra, $[-,-]$ is a graded Lie bracket of degree $k$, and
	\begin{equation}\label{eq:graded-transposed-leibniz}
		2z[x,y]=[zx,y]+(-1)^{|z|(|x|+k)}[x,zy]
	\end{equation}
	for all homogeneous $x,y,z\in A$.
\end{definition}

The sign in the second term is forced by the degree of the adjoint operator:
for a bracket of degree $k$, the map $[x,-]$ has degree $|x|+k$. The following
proposition makes this precise: no other choice of sign is compatible with the graded
skew-symmetry of the bracket.

\begin{proposition}\label{lem:sign-forced}
	For homogeneous $x,y,z\in A$, \Cref{eq:graded-transposed-leibniz} written for
	the triple $(x,y,z)$ and \Cref{eq:graded-transposed-leibniz} written for the
	triple $(y,x,z)$ are equivalent. If the sign $(-1)^{|z|(|x|+k)}$ is replaced by
	$(-1)^{|z||x|}$, the two resulting identities differ by the factor
	$(-1)^{k|z|}$, hence they are equivalent for all $x,y,z$ only when $k$ is even.
\end{proposition}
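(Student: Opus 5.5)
The plan is a direct sign computation. I write \Cref{eq:graded-transposed-leibniz} for the triple $(y,x,z)$ and move every bracket back into the form it has in the identity for $(x,y,z)$, using only graded skew-symmetry $[a,b]=-(-1)^{(|a|+k)(|b|+k)}[b,a]$. Graded commutativity of the product is needed only to keep $z$ on the left. I will set $\varepsilon=(-1)^{(|x|+k)(|y|+k)}$ and keep in mind that $|zx|=|z|+|x|$ and $|zy|=|z|+|y|$.

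The identity for $(y,x,z)$ is
\[
2z[y,x]=[zy,x]+(-1)^{|z|(|y|+k)}[y,zx].
\]
The three skew-symmetry substitutions are:
\begin{itemize}
\item[(i)] $[y,x]=-\varepsilon[x,y]$;
\item[(ii)] $[zy,x]=-(-1)^{(|z|+|y|+k)(|x|+k)}[x,zy]=-\varepsilon(-1)^{|z|(|x|+k)}[x,zy]$;
\item[(iii)] $[y,zx]=-(-1)^{(|y|+k)(|z|+|x|+k)}[zx,y]=-\varepsilon(-1)^{|z|(|y|+k)}[zx,y]$.
\end{itemize}
In (iii) the extra sign $(-1)^{|z|(|y|+k)}$ cancels against the coefficient already present in the identity, because its square is $1$. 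Substituting, the identity for $(y,x,z)$ becomes $-\varepsilon$ times
\[
2z[x,y]=[zx,y]+(-1)^{|z|(|x|+k)}[x,zy],
\]
which is the identity for $(x,y,z)$. Since $-\varepsilon$ is a unit, the two identities are equivalent.

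For the second claim I repeat the computation with the coefficient $(-1)^{|z||x|}$, which for $(y,x,z)$ reads $(-1)^{|z||y|}$. Substitution (ii) still produces $(-1)^{|z|(|x|+k)}=(-1)^{k|z|}(-1)^{|z||x|}$. In (iii), the factor $(-1)^{|z||y|}(-1)^{|z|(|y|+k)}$ now leaves $(-1)^{k|z|}$ behind. The left-hand side again contributes only $-\varepsilon$.

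Hence the identity for $(y,x,z)$ reads
\[
-\varepsilon\, 2z[x,y]=-\varepsilon(-1)^{k|z|}\bigl([zx,y]+(-1)^{|z||x|}[x,zy]\bigr).
\]
This differs from the modified identity for $(x,y,z)$ exactly by the factor $(-1)^{k|z|}$. When $k$ is even this factor is $1$ and the two identities agree. When $k$ is odd and $z$ is odd, the two identities assert that $2z[x,y]$ equals both $R$ and $-R$, where $R=[zx,y]+(-1)^{|z||x|}[x,zy]$. Imposing both forces $z[x,y]=0$ and $R=0$, which is a genuinely stronger condition than either identity alone. So they are not equivalent in general.

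The only delicate step is the bookkeeping of the exponent in (iii). There one has to expand $(|y|+k)(|z|+|x|+k)$ and check that precisely $\varepsilon$ times the appropriate power of $(-1)^{|z|}$ survives. Everything else is mechanical.
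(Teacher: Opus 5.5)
Your proposal is correct and follows the paper's proof. It uses the same three graded skew-symmetry substitutions and the same exponent bookkeeping to show that the $(y,x,z)$ identity is $-(-1)^{(|x|+k)(|y|+k)}$ times the $(x,y,z)$ identity, and in the modified case it finds the same extra factor $(-1)^{k|z|}$. Your remark that for $k$ and $z$ odd the two modified identities together force $z[x,y]=0$ and $R=0$ makes the non-equivalence a little more concrete than the paper's proof does, though, like the paper, you do not exhibit a specific structure in which one identity holds and the other fails.
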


\begin{proof}
	Write \Cref{eq:graded-transposed-leibniz} for $(y,x,z)$ and rewrite its
	right-hand side by graded skew-symmetry:
    \begin{align*}
      [zy,x]&=-(-1)^{(|z|+|y|+k)(|x|+k)}[x,zy],\\
      (-1)^{|z|(|y|+k)}[y,zx]
		&=-(-1)^{|z|(|y|+k)+(|y|+k)(|z|+|x|+k)}[zx,y].
    \end{align*}

	The two exponents reduce modulo two to
	$|z|(|x|+k)+(|x|+k)(|y|+k)$ and to $(|x|+k)(|y|+k)$ respectively. Since
	$2z[y,x]=-(-1)^{(|x|+k)(|y|+k)}2z[x,y]$, the identity for $(y,x,z)$ is
	$-(-1)^{(|x|+k)(|y|+k)}$ times the identity for $(x,y,z)$. The second assertion
	follows from the same computation, in which the two reduced exponents acquire
	the extra summand $k|z|$.
\end{proof}

\begin{remark}
	For $k=0$, reducing degrees modulo two gives the compatibility condition of
	transposed Poisson superalgebras in \cite{Abramov2024}. Thus the super theory
	with even bracket is contained in the present definition. For $k=-1$ one obtains
	the transposed analogue of a Gerstenhaber algebra.
\end{remark}

Two different classification problems are attached to
\Cref{def:graded-transposed}, according to which of the two operations is held
fixed. Given a graded commutative associative algebra $(A,\cdot)$, a
\emph{transposed bracket of degree $k$} on it is a graded Lie bracket of degree
$k$ satisfying \eqref{eq:graded-transposed-leibniz}. Given a graded Lie algebra
$(A,[-,-])$ of degree $k$, a \emph{transposed product} on it is a graded
commutative associative product satisfying
\eqref{eq:graded-transposed-leibniz}. The zero bracket is a transposed bracket
on every $(A,\cdot)$, and the zero product is a transposed product on every
$(A,[-,-])$, so both problems have a solution and the question is always whether
there are others. We avoid calling either of these solutions trivial, since the
two vanishing conditions are independent: the zero product satisfies
\eqref{eq:graded-transposed-leibniz} whatever the bracket is, and imposes
nothing on it.

Now we are ready to look at some first consequences of these assumptions. Let $(A,\cdot,[-,-])$ be a graded transposed Poisson algebra of degree $k$.
For a homogeneous element $z\in A$, we denote by
\[
	M_z:A\longrightarrow A,\qquad M_z(x)=zx,
\]
the multiplication operator. Thus $M_z$ is homogeneous of degree $|z|$.

\begin{definition}
	Let $(A,[-,-])$ be a graded Lie algebra with bracket of degree $k$. A
	homogeneous linear map $\varphi:A\to A$ of degree $r$ is called a
	\emph{graded $\frac12$-derivation} if
	\[
		\varphi([x,y])
		=
		\frac12\Big(
		[\varphi(x),y]
		+
		(-1)^{r(|x|+k)}[x,\varphi(y)]
		\Big)
	\]
	for all homogeneous $x,y\in A$. It is called \emph{trivial} if it is a scalar
	multiple of the identity.
\end{definition}

Trivial graded $\frac12$-derivations are homogeneous of degree zero, and they
are always present. The relevant hypothesis is therefore the absence of
$\frac12$-derivations other than these. Written in terms of the multiplication
operators, \eqref{eq:graded-transposed-leibniz} says precisely that $M_z$ is a
graded $\frac12$-derivation of degree $|z|$ for every homogeneous $z\in A$, and
this is what makes the hypothesis effective.

\begin{proposition}\label{prop:no-half-derivations}
Let $(A,[-,-])$ be a graded Lie algebra of degree $k$ with $\dim A>1$. If every
homogeneous graded $\frac12$-derivation of $A$ is trivial, then the only
transposed product on $(A,[-,-])$ is the zero product.
\end{proposition}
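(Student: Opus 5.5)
The plan is to use the observation made just before the statement: by \eqref{eq:graded-transposed-leibniz}, for every homogeneous $z\in A$ the multiplication operator $M_z$ is a graded $\frac12$-derivation of degree $|z|$. Under the hypothesis, $M_z$ is therefore a scalar multiple of the identity, say $M_z=\alpha(z)\,\mathrm{id}_A$. The proof then consists of showing that this forces $\alpha\equiv 0$. The only hypotheses used for that are graded commutativity and $\dim A>1$.

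\emph{Step 1: nonzero degree.} Suppose $|z|=r\neq 0$. Then $M_z$ maps $A^i$ into $A^{i+r}$, while $\alpha(z)\,\mathrm{id}$ maps $A^i$ into $A^i$. Hence for homogeneous $x\in A^i$ we get $\alpha(z)x\in A^i\cap A^{i+r}=0$. Taking any nonzero $x$ (which exists since $A\neq 0$) gives $\alpha(z)=0$. So $M_z=0$ whenever $|z|\neq 0$.

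\emph{Step 2: degree zero.} Suppose some $z\in A^0$ has $\alpha(z)\neq 0$; we derive a contradiction with $\dim A>1$.
\begin{itemize}
\item Let $x$ be homogeneous with $|x|\neq 0$. Graded commutativity gives $zx=(-1)^{0}xz=M_x(z)$, and this is $0$ by Step 1. On the other hand $zx=\alpha(z)x$, so $\alpha(z)x=0$ and hence $x=0$. Thus $A=A^0$.
\item Let $x\in A^0$. Comparing $zx=\alpha(z)x$ with $xz=\alpha(x)z$ and using $zx=xz$ gives $\alpha(z)x=\alpha(x)z$. Hence $x\in\mathbb K z$.
\end{itemize}
Together these give $A=\mathbb K z$, which is one-dimensional, contradicting $\dim A>1$. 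So $\alpha$ vanishes on $A^0$ as well.

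\emph{Conclusion.} We now have $M_z=0$ for every homogeneous $z$. By bilinearity the product vanishes identically.

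I expect no real obstacle. The only delicate point is Step 1: a "trivial" $\frac12$-derivation is a scalar multiple of the identity and hence has degree $0$. It is this mismatch of degrees that kills the multiplication operators of nonzero degree before the dimension argument is needed.
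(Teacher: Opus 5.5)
Your proof is correct and follows the paper's argument. The degree mismatch gives $M_z=0$ whenever $|z|\neq0$, and on $A^0$ the relation $\alpha(z)x=\alpha(x)z$ together with $\dim A>1$ forces $\alpha=0$; the only difference is that you argue by contradiction to reach $A=\mathbb K z$, whereas the paper splits into the cases $\dim A^0\geq2$ and $\dim A^0=1$.
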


\begin{proof}
Let $\cdot$ be such a product. As just observed, $M_z$ is a homogeneous
$\frac12$-derivation of degree $|z|$ for every homogeneous $z$. A scalar
multiple of the identity is homogeneous of degree zero, so $M_z=0$ whenever
$|z|\neq0$. By graded commutativity, $zx=0$ as soon as one of the two factors
has nonzero degree.

There remains the component $A^0$. For $z,x\in A^0$ write
$M_z=\kappa_z\,\mathrm{id}$ and $M_x=\kappa_x\,\mathrm{id}$, so that
$zx=\kappa_z x=\kappa_x z$. If $\dim A^0\geq2$, choosing $z$ and $x$ linearly
independent gives $\kappa_z=\kappa_x=0$. If $\dim A^0=1$, say
$A^0=\langle e\rangle$, then $\dim A>1$ provides a nonzero homogeneous $x$ with
$|x|\neq0$, and $0=ex=\kappa_e x$ forces $\kappa_e=0$. In either case the
product vanishes identically.
\end{proof}

\begin{remark}
	In the ungraded theory, the preceding observation is one of the basic tools for
	studying transposed Poisson structures on a fixed Lie algebra: multiplication
	operators are $\frac12$-derivations, and the absence of non-trivial
	$\frac12$-derivations forces the transposed product to vanish
	\cite{Ferreira2021}. The same mechanism remains valid in the graded setting,
	with the sign dictated by the degree of the bracket.
\end{remark}

The cyclic identity \Cref{eq:cyclic-basic} is the first place where the parity
of $k$ becomes visible. The following identity holds in all degrees and isolates
the correction term.

\begin{proposition}\label{prop:master-identity}
	Let $(A,\cdot,[-,-])$ be a graded transposed Poisson algebra of degree $k$.
	Then, for all homogeneous $x,y,z\in A$,
	\[
		x[y,z]
		-(-1)^{|x||y|}y[x,z]
		+(-1)^{|x||z|+(|y|+k)(|z|+k)}z[x,y]
		=
		\frac{1-(-1)^{k}}{2}\,(-1)^{E}\,[zy,x],
	\]
	where $E=|x||y|+(|x|+k)(|z|+k)+|y||z|$.
\end{proposition}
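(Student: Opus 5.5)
The plan is to imitate the proof of the ungraded cyclic identity \eqref{eq:cyclic-basic}: write \eqref{eq:graded-transposed-leibniz} three times, with multiplier $x$, $y$ and $z$ in turn, and take a signed combination. In the ungraded case the six brackets cancel in pairs. Here we expect two of the three pairs to cancel exactly, and the third to cancel only up to a factor $(-1)^k$. The three instances are
\begin{align*}
2x[y,z]&=[xy,z]+(-1)^{|x|(|y|+k)}[y,xz],\\
2y[x,z]&=[yx,z]+(-1)^{|y|(|x|+k)}[x,yz],\\
2z[x,y]&=[zx,y]+(-1)^{|z|(|x|+k)}[x,zy].
\end{align*}
We form the combination (first) $-(-1)^{|x||y|}$(second) $+\,c\,$(third), where the left-hand side of the statement dictates $c=(-1)^{|x||z|+(|y|+k)(|z|+k)}$. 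Its left-hand side is twice the left-hand side of the claimed identity.

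On the right-hand side we proceed in three steps.

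First, graded commutativity gives $yx=(-1)^{|x||y|}xy$, so the terms $[xy,z]$ and $-(-1)^{|x||y|}[yx,z]$ cancel exactly. This uses no bracket symmetry, only the product.

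Second, we compare $-(-1)^{|x||y|+|y|(|x|+k)}[x,yz]$ with $c(-1)^{|z|(|x|+k)}[x,zy]$. Rewriting $zy=(-1)^{|y||z|}yz$, we check modulo $2$ that the two exponents agree, so these terms cancel. This is the purpose of the particular choice of $c$, and it is the analogue of \Cref{lem:sign-forced}, where the $k$-dependent sign is exactly what makes such pairs match.

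Third, the surviving pair is $(-1)^{|x|(|y|+k)}[y,xz]$ and $c\,[zx,y]$. We rewrite $[y,xz]$ using graded skew-symmetry, $[y,xz]=-(-1)^{(|y|+k)(|x|+|z|+k)}[xz,y]$, and then use $xz=(-1)^{|x||z|}zx$. Both terms then become multiples of $[zx,y]$, and the exponents should differ by $k^2\equiv k$. So this pair contributes $(-1)^{E'}\bigl(1-(-1)^k\bigr)[zx,y]$ for some explicit exponent $E'$.

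Dividing by $2$ leaves $\tfrac{1-(-1)^k}{2}$ times a signed bracket. The last step converts $[zx,y]$ into $[zy,x]$ as written in the statement, and checks that the total sign reduces to $(-1)^E$. If this conversion does not come out cleanly, the identity should instead be read with whichever representative of that bracket the cancellation naturally produces, adjusted by \Cref{lem:sign-forced}.

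The main obstacle is purely sign bookkeeping: identifying the surviving pair and confirming that its residual sign is exactly $(-1)^E$. I would carry this out by reducing every exponent modulo $2$ as a polynomial in $|x|,|y|,|z|,k$, using $k^2\equiv k$ and $|\cdot|^2\equiv|\cdot|$. For even $k$ the right-hand side vanishes and the identity recovers a graded version of \eqref{eq:cyclic-basic}. This gives a consistency check on the signs.
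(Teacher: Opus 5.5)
Your combination, instance one $-(-1)^{|x||y|}$ instance two $+\gamma$ instance three with $\gamma=(-1)^{|x||z|+(|y|+k)(|z|+k)}$ (your $c$; I reserve $c$ for $|z|$), is the right one. It is in substance the paper's proof: the paper rewrites the same three instances in the form $[xy,z]=2x[y,z]+(-1)^{(|y|+k)(|z|+k)}[xz,y]$ and compares two expressions for $[xy,z]$. However, your sign bookkeeping swaps the roles of the two bracket pairs, and as written the argument does not reach the statement.

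Put $a=|x|$, $b=|y|$, $c=|z|$. In your second step the two terms are $-(-1)^{ab+b(a+k)}[x,yz]=-(-1)^{bk}[x,yz]$ and $\gamma(-1)^{c(a+k)}[x,zy]=(-1)^{ac+(b+k)(c+k)+c(a+k)+bc}[x,yz]=(-1)^{bk+k}[x,yz]$, using $k^2\equiv k$. The exponents differ by $k$, so for odd $k$ these terms do not cancel. They add up to $(-1)^{bk}\bigl((-1)^k-1\bigr)[x,yz]$. In your third step, by contrast, $(-1)^{a(b+k)}[y,xz]=-(-1)^{a(b+k)+(b+k)(a+c+k)+ac}[zx,y]=-\gamma[zx,y]$, because $a(b+k)+(b+k)a\equiv 0$. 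So that pair cancels exactly, and it is this cancellation that the choice of $\gamma$ is made for.

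The swap is not harmless, because your final step cannot be carried out. Graded commutativity and skew-symmetry only reorder $zx$ and exchange the two slots of the bracket. No sign manipulation therefore turns a residue in $[zx,y]$ into one in $[zy,x]$. Doing so would need a further instance of the transposed rule, which reintroduces a multiple of $z[x,y]$ and changes the left-hand side. Your fallback of reading the identity with whichever bracket the cancellation produces would thus prove a different statement. The even-$k$ consistency check cannot catch the error, since for even $k$ both pairs cancel.

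With the correct assignment the proof closes. Twice the left-hand side equals $\bigl(1-(-1)^k\bigr)\bigl(-(-1)^{bk}[x,yz]\bigr)$. Moreover $-(-1)^{bk}[x,yz]=-(-1)^{bk+bc}[x,zy]=(-1)^{bk+bc+(a+k)(b+c+k)}[zy,x]$, and $bk+bc+(a+k)(b+c+k)\equiv ab+(a+k)(c+k)+bc=E$ modulo two. Dividing by $2$ gives the claim.
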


\begin{proof}
	Applying \eqref{eq:graded-transposed-leibniz} to the triple obtained from
	$(x,y,z)$ by the substitution $z\mapsto x$, $x\mapsto y$, $y\mapsto z$ gives
	$2x[y,z]=[xy,z]+(-1)^{|x|(|y|+k)}[y,xz]$, and rewriting the last bracket by
	graded skew-symmetry, in which the second argument has degree $|x|+|z|$, the
	two occurrences of $|x|$ cancel and we obtain
	\begin{equation}\label{eq:first-slot}
		[xy,z]=2x[y,z]+(-1)^{(|y|+k)(|z|+k)}[xz,y].
	\end{equation}
	Applying \Cref{eq:first-slot} to the triple $(z,x,y)$ and using
	$xz=(-1)^{|x||z|}zx$, we may substitute the result into \Cref{eq:first-slot} and
	obtain
	\[
		[xy,z]
		=2x[y,z]
		+(-1)^{(|y|+k)(|z|+k)+|x||z|}
		\Bigl(2z[x,y]+(-1)^{(|x|+k)(|y|+k)}[zy,x]\Bigr).
	\]
	On the other hand $[xy,z]=(-1)^{|x||y|}[yx,z]$, and \Cref{eq:first-slot} applied
	to $(y,x,z)$, together with $[yz,x]=(-1)^{|y||z|}[zy,x]$, gives
	\[
		[xy,z]
		=(-1)^{|x||y|}
		\Bigl(2y[x,z]+(-1)^{(|x|+k)(|z|+k)+|y||z|}[zy,x]\Bigr).
	\]
	Comparing the two expressions, the coefficients of $[zy,x]$ are $(-1)^{E'}$ and
	$(-1)^{E}$ with
	\[
		E'=(|y|+k)(|z|+k)+|x||z|+(|x|+k)(|y|+k),
		\qquad
		E=|x||y|+(|x|+k)(|z|+k)+|y||z| .
	\]
	Expanding and using $k^2\equiv k$ modulo two gives $E'\equiv E+k$. Hence
	\[
		2x[y,z]-2(-1)^{|x||y|}y[x,z]
		+2(-1)^{|x||z|+(|y|+k)(|z|+k)}z[x,y]
		=\bigl((-1)^{E}-(-1)^{E'}\bigr)[zy,x],
	\]
	and $(-1)^{E}-(-1)^{E'}=(1-(-1)^{k})(-1)^{E}$. Dividing by $2$ gives the claim.
\end{proof}

\begin{corollary}\label{cor:cyclic-even}
	If $k$ is even, then for all homogeneous $x,y,z\in A$,
	\[
		(-1)^{|x||z|}x[y,z]
		+
		(-1)^{|y||x|}y[z,x]
		+
		(-1)^{|z||y|}z[x,y]
		=0 .
	\]
\end{corollary}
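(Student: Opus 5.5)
The plan is to specialise \Cref{prop:master-identity} to even $k$ and then normalise the signs. Only two facts are needed: the correction term vanishes, and graded skew-symmetry of a bracket of even degree is the ordinary graded skew-symmetry.

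First, the right-hand side. For $k$ even the factor $\tfrac12(1-(-1)^k)$ vanishes, so the right-hand side of \Cref{prop:master-identity} is zero. Since $k\equiv0$ and $k^2\equiv0$ modulo two,
\[
(|y|+k)(|z|+k)\equiv |y||z| .
\]
The master identity therefore becomes
\[
x[y,z]-(-1)^{|x||y|}y[x,z]+(-1)^{|x||z|+|y||z|}z[x,y]=0 .
\]

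Second, the middle term. Graded skew-symmetry of a bracket of degree $k$ reads $[x,z]=-(-1)^{(|x|+k)(|z|+k)}[z,x]$. For $k$ even this is $[x,z]=-(-1)^{|x||z|}[z,x]$. Substituting, the middle term becomes
\[
+(-1)^{|x||y|+|x||z|}\,y[z,x].
\]

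Finally, multiply the whole identity by $(-1)^{|x||z|}$. The first term becomes $(-1)^{|x||z|}x[y,z]$. The second becomes $(-1)^{|x||y|}y[z,x]$. The third becomes $(-1)^{2|x||z|+|y||z|}z[x,y]=(-1)^{|z||y|}z[x,y]$. This is exactly the cyclic identity of the statement.

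There is no real obstacle here. The only point needing care is the parity bookkeeping: checking that every $k$-dependent exponent reduces correctly modulo two when $k$ is even, and that the sign produced by skew-symmetry is the one that aligns the middle term with the cyclic pattern.
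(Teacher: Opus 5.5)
Your proposal is correct and is essentially the paper's own proof. You specialise \Cref{prop:master-identity} to even $k$, so that the correction term vanishes and $(-1)^{(|y|+k)(|z|+k)}=(-1)^{|y||z|}$; you rewrite $-(-1)^{|x||y|}y[x,z]$ as $(-1)^{|x||y|+|x||z|}y[z,x]$ by graded skew-symmetry; and you multiply through by $(-1)^{|x||z|}$, with sign bookkeeping that checks out at each step.
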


\begin{proof}
	For $k$ even the right-hand side of \Cref{prop:master-identity} vanishes and
	$(-1)^{(|y|+k)(|z|+k)}=(-1)^{|y||z|}$. Multiplying the resulting identity by
	$(-1)^{|x||z|}$ and replacing $-(-1)^{|x||y|}y[x,z]$ by
	$(-1)^{|x||y|+|x||z|}y[z,x]$ gives the displayed formula.
\end{proof}

Thus the cyclic identity of the even super case, and with it the identities
\Cref{eq:cyclic-h,eq:hom-lie-id,eq:farkas-id,eq:mixed-product} which depend on
it, is tied to the fact that the Lie bracket is even. For odd $k$, and in
particular for the Gerstenhaber degree $k=-1$, the correction term
$[zy,x]$ survives, and no such cyclic identity holds in general.

\subsection{The unital case}

We now turn to the unital case, which admits a complete description in every
degree. Both parities are governed by \Cref{lem:parity-split} below, whose
right-hand side carries the factor $1+(-1)^{k}$: for $k$ even the terms in
$D(zy)$ double and force $D=[1,-]$ to be a derivation, while for $k$ odd they
cancel and leave a relation from which $D$ is recovered from the single element
$[1,1]$. The two theorems below are the two branches of this alternative. The
complementary factor $1-(-1)^k$ appeared in \Cref{prop:master-identity}, where
it measured the failure of the cyclic identity.

\begin{lemma}\label{lem:unital-D}
	Let $(A,\cdot,[-,-])$ be a unital graded transposed Poisson algebra of degree
	$k$, and set $D(x)=[1,x]$. Then, for every homogeneous $z,y\in A$,
	\[
		[z,y]
		=
		2zD(y)-(-1)^{k|z|}D(zy).
	\]
	In particular, the bracket is determined by the operator $D=[1,-]$.
\end{lemma}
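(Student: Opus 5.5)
The identity should follow from a single substitution into the graded transposed Leibniz rule \eqref{eq:graded-transposed-leibniz}: take $x=1$, and keep $z$ and $y$ arbitrary homogeneous elements.

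Since $A$ is unital and graded commutative, the unit is homogeneous of degree $0$ (the standard convention). Hence in the sign $(-1)^{|z|(|x|+k)}$ we have $|x|=0$, and the sign reduces to $(-1)^{k|z|}$. Moreover $z\cdot 1=z$, so the first term on the right becomes $[z,y]$. The rule therefore reads
\[
	2z[1,y]=[z,y]+(-1)^{k|z|}[1,zy].
\]
Writing $D(x)=[1,x]$, this becomes $2zD(y)=[z,y]+(-1)^{k|z|}D(zy)$. Solving for $[z,y]$ gives the displayed formula. No graded skew-symmetry and no Jacobi identity is needed, and the formula holds for every $k$. The parity of $k$ only enters later, through the factor $(-1)^{k|z|}$.

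The final assertion, that the bracket is determined by $D$, is immediate. The right-hand side involves only the product and the operator $D$, and bilinearity extends the formula from homogeneous elements to all of $A$.

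There is essentially no obstacle. The only point requiring care is the sign bookkeeping. One must check that the sign in \eqref{eq:graded-transposed-leibniz} is evaluated with $x=1$ in the middle slot, so that its exponent is $|z|k$ rather than $|z|(|y|+k)$ or similar. One must also confirm that $1\in A^0$, so that no further sign appears when $z\cdot 1$ is rewritten as $z$.
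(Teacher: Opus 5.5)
Your proposal is correct and is exactly the paper's argument: set $x=1$ in \eqref{eq:graded-transposed-leibniz}, use $z\cdot 1=z$ and $|1|=0$ so the sign becomes $(-1)^{k|z|}$, and solve for $[z,y]$. Your remark that only the transposed Leibniz rule is used, with no skew-symmetry or Jacobi identity, also matches \Cref{rmk:jacobi-automatic}. (The fact $1\in A^0$ does not need to be taken as a convention: it holds automatically in any unital graded algebra.)
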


\begin{proof}
	Apply the transposed Leibniz rule with $x=1$. Since $1z=z$, one obtains
	\[
		2z[1,y]=[z,y]+(-1)^{|z|k}[1,zy],
	\]
	which is the desired formula.
\end{proof}

\begin{lemma}\label{lem:parity-split}
	Let $(A,\cdot,[-,-])$ be a unital graded transposed Poisson algebra of degree
	$k$ and let $D=[1,-]$. Then, for all homogeneous $z,y\in A$,
	\begin{equation}\label{eq:parity-split}
		2zD(y)+2(-1)^{(|z|+k)(|y|+k)}yD(z)
		=(-1)^{k|z|}\bigl(1+(-1)^{k}\bigr)D(zy).
	\end{equation}
\end{lemma}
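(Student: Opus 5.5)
Set $D=[1,-]$. The plan is to write the formula of \Cref{lem:unital-D} twice, once for the pair $(z,y)$ and once for $(y,z)$, and then to link the two by graded skew-symmetry of the bracket.

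Lemma~\ref{lem:unital-D} gives
\[
	[z,y]=2zD(y)-(-1)^{k|z|}D(zy),
	\qquad
	[y,z]=2yD(z)-(-1)^{k|y|}D(yz).
\]
By graded skew-symmetry, $[z,y]=-(-1)^{(|z|+k)(|y|+k)}[y,z]$. Substituting both expressions yields
\[
	2zD(y)-(-1)^{k|z|}D(zy)
	=-(-1)^{(|z|+k)(|y|+k)}\bigl(2yD(z)-(-1)^{k|y|}D(yz)\bigr).
\]
Moving the $D(y)$ and $D(z)$ terms to the left already produces the left-hand side of \eqref{eq:parity-split}. The right-hand side becomes
\[
	(-1)^{k|z|}D(zy)+(-1)^{(|z|+k)(|y|+k)+k|y|}D(yz).
\]

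The only real bookkeeping is the sign of this second term. Graded commutativity gives $yz=(-1)^{|y||z|}zy$, and $D$ is linear, so $D(yz)=(-1)^{|y||z|}D(zy)$. The coefficient of $D(zy)$ in the second term therefore has exponent
\[
	(|z|+k)(|y|+k)+k|y|+|y||z|.
\]
Expanding the product, this equals $|y||z|+k|z|+k|y|+k^2+k|y|+|y||z|$. Modulo two this is $k|z|+k^2\equiv k|z|+k$, using $k^2\equiv k$.

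So the right-hand side is $(-1)^{k|z|}D(zy)+(-1)^{k|z|+k}D(zy)$, which equals $(-1)^{k|z|}\bigl(1+(-1)^k\bigr)D(zy)$, as claimed. The main risk is a sign slip in this exponent reduction. Everything else is direct substitution from \Cref{lem:unital-D}.
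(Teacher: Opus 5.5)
Your proposal is correct and follows essentially the same route as the paper's proof: apply \Cref{lem:unital-D} to $(z,y)$ and $(y,z)$, link them by graded skew-symmetry together with $D(yz)=(-1)^{|y||z|}D(zy)$, and reduce the exponent modulo two to $k|z|+k$. Your sign bookkeeping checks out.
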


\begin{proof}
	Write $a=|z|$ and $b=|y|$. By \Cref{lem:unital-D} applied to the pair $(y,z)$,
	and by $D(yz)=(-1)^{ab}D(zy)$,
	\[
		[y,z]=2yD(z)-(-1)^{kb+ab}D(zy).
	\]
	Substituting this into $[z,y]=-(-1)^{(a+k)(b+k)}[y,z]$ and comparing with
	\Cref{lem:unital-D} applied to $(z,y)$ gives
	\[
		2zD(y)+2(-1)^{(a+k)(b+k)}yD(z)
		=\Bigl((-1)^{ka}+(-1)^{(a+k)(b+k)+kb+ab}\Bigr)D(zy).
	\]
	Expanding $(a+k)(b+k)\equiv ab+ak+bk+k$ modulo two, the second exponent reduces
	to $ka+k$, and the two coefficients are $(-1)^{ka}$ and $(-1)^{ka+k}$.
\end{proof}

\begin{lemma}\label{lem:even-no-signs}
	Let $k$ be even and let $D$ be a homogeneous derivation of degree $k$ of a
	graded commutative associative algebra $(A,\cdot)$. Then $|D(x)|$ and $|x|$ have
	the same parity, so that
	\begin{equation}\label{eq:wronskian-nosign}
		x\,D(y)-(-1)^{(|x|+k)(|y|+k)}y\,D(x)=x\,D(y)-D(x)\,y,
		\qquad
		D(xy)=D(x)\,y+x\,D(y),
	\end{equation}
	both without signs, and the sign in \eqref{eq:graded-transposed-leibniz} is
	$(-1)^{|z||x|}$.
\end{lemma}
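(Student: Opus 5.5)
The argument is a parity bookkeeping exercise, and I will organise it in three short steps.

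\emph{Degree of $D(x)$.} Since $D$ is homogeneous of degree $k$, it maps $A^{|x|}$ into $A^{|x|+k}$, so $|D(x)|=|x|+k$. As $k$ is even, $|D(x)|\equiv|x|$ modulo two. Every sign below will be simplified by replacing $|D(x)|$ with $|x|$ and discarding even multiples of $k$.

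\emph{Leibniz rule.} The graded Leibniz rule for a derivation of degree $k$ reads
\[
D(xy)=D(x)\,y+(-1)^{k|x|}x\,D(y).
\]
Since $k$ is even, $(-1)^{k|x|}=1$, which gives the second identity of \eqref{eq:wronskian-nosign}.

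\emph{Wronskian expression.} Here $(|x|+k)(|y|+k)\equiv|x||y|$ modulo two, because $k$ is even. By graded commutativity and the first step,
\[
y\,D(x)=(-1)^{|y|\,|D(x)|}D(x)\,y=(-1)^{|x||y|}D(x)\,y .
\]
Therefore
\[
(-1)^{(|x|+k)(|y|+k)}y\,D(x)=(-1)^{|x||y|}(-1)^{|x||y|}D(x)\,y=D(x)\,y,
\]
which is the first identity of \eqref{eq:wronskian-nosign}.

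\emph{Sign in the transposed rule.} The sign in \eqref{eq:graded-transposed-leibniz} is $(-1)^{|z|(|x|+k)}=(-1)^{|z||x|}(-1)^{k|z|}$. The second factor equals $1$ because $k$ is even, so the sign is $(-1)^{|z||x|}$. This agrees with \Cref{lem:sign-forced}, where the two conventions coincide exactly when $k$ is even.

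No step is a genuine obstacle. The only point needing care is the first step: graded commutativity must be applied with the degree of $D(x)$, namely $|x|+k$, and not with $|x|$. Once the evenness of $k$ is used there, all the signs collapse as stated.
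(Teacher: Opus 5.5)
Your proof is correct and matches the paper's argument essentially step for step. In both, $|D(x)|=|x|+k\equiv|x|$ makes the Koszul signs $(-1)^{k|x|}$ and $(-1)^{k|z|}$ trivial, and graded commutativity applied with the degree of $D(x)$ cancels the factor $(-1)^{|x||y|}$ in the Wronskian.
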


\begin{proof}
	Since $k$ is even, $|D(x)|=|x|+k\equiv|x|$, whence
	$(-1)^{(|x|+k)(|y|+k)}=(-1)^{|x||y|}$ and $yD(x)=(-1)^{|y||x|}D(x)y$. The second
	identity is the graded Leibniz rule for $D$, whose sign $(-1)^{k|x|}$ is
	trivial, and the third statement is $(-1)^{|z|(|x|+k)}=(-1)^{|z||x|}$.
\end{proof}

\begin{theorem}\label{thm:unital-even}
	Let $k$ be even and let $(A,\cdot)$ be a unital graded commutative associative
	algebra. The assignment
	\[
		D\longmapsto[-,-]_D,
		\qquad
		[x,y]_D=x\,D(y)-D(x)\,y,
	\]
	is a bijection between the homogeneous derivations of $(A,\cdot)$ of degree $k$
	and the transposed brackets of degree $k$ on $(A,\cdot)$. Its inverse sends a
	bracket to $D=[1,-]$.
\end{theorem}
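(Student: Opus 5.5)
The plan is to establish three facts: the map lands in transposed brackets, its composite with $[-,-]\mapsto[1,-]$ is the identity on derivations, and conversely every transposed bracket comes from $D=[1,-]$.

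\emph{Step 1: the inverse direction.} Let $[-,-]$ be a transposed bracket of even degree $k$ and set $D=[1,-]$, a map of degree $k$. \Cref{lem:parity-split} with $1+(-1)^k=2$ and $(-1)^{k|z|}=1$ gives
\[
zD(y)+(-1)^{|z||y|}yD(z)=D(zy).
\]
By \Cref{lem:even-no-signs}, $|D(z)|\equiv|z|$, so $(-1)^{|z||y|}yD(z)=D(z)y$. Hence $D(zy)=D(z)y+zD(y)$, and $D$ is a derivation of degree $k$; for even $k$ the graded Leibniz sign is trivial. Substituting into \Cref{lem:unital-D} gives
\[
[z,y]=2zD(y)-D(zy)=zD(y)-D(z)y=[z,y]_D .
\]
So every transposed bracket equals $[-,-]_{[1,-]}$, which proves injectivity of $[-,-]\mapsto[1,-]$ and surjectivity of $D\mapsto[-,-]_D$.

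\emph{Step 2: $[1,-]_D=D$.} Since $D$ is a derivation, $D(1)=0$, so $[1,y]_D=D(y)-D(1)y=D(y)$.

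\emph{Step 3: $[-,-]_D$ is a transposed bracket of degree $k$.} This is the main work. The bracket clearly has degree $k$. Graded skew-symmetry follows from \eqref{eq:wronskian-nosign}: $[y,x]_D=yD(x)-D(y)x$ equals $-(-1)^{|x||y|}[x,y]_D$, using that $D(x)$ has the parity of $x$, and $(-1)^{|x||y|}=(-1)^{(|x|+k)(|y|+k)}$ for even $k$.

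The transposed Leibniz rule, with sign $(-1)^{|z||x|}$ by \Cref{lem:even-no-signs}, is checked by repeating the ungraded computation in \Cref{sec:preliminaries}. One expands $[zx,y]_D+(-1)^{|z||x|}[x,zy]_D$ with the sign-free Leibniz rule and moves factors past each other, noting that parities behave exactly as if $D$ were even. The terms in $D(z)$ cancel and $2z[x,y]_D$ remains.

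The expected obstacle is the graded Jacobi identity. The cleanest route is a parity reduction: since $D$ is effectively even, I would argue that the classical verification transfers. Jacobi for $xD(y)-D(x)y$ expands into terms of the form $x\,D(y)\,D(z)$ and $x\,y\,D^2(z)$, each a monomial in $x,y,z,D(\cdot)$ within a graded commutative algebra. For the Jacobiator $[[x,y],z]+\text{cyclic}$ with Koszul signs, I would verify by hand that:
\begin{itemize}
\item the $D^2$ terms cancel in pairs, as $xyD^2(z)$ appears with opposite signs;
\item the $D\cdot D$ terms cancel, as $xD(y)D(z)$ appears twice with opposite signs;
\end{itemize}
tracking signs only through $(-1)^{|x||y|}$-type swaps. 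This is legitimate because every degree shift by $k$ is even.

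An alternative is to note that $[-,-]_D$ is the derived bracket on the even-shifted setting, and to invoke the ungraded statement applied to the super (mod 2) reduction, where the known super computation of \cite{Abramov2024} applies. I would present the direct expansion.
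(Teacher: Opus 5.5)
Your proposal is correct and follows essentially the same route as the paper. The converse goes through \Cref{lem:parity-split} and \Cref{lem:unital-D}, the identity $D(1)=0$ closes the bijection, and skew-symmetry, the transposed Leibniz rule and the graded Jacobi identity are checked by sign-free expansions; the paper writes out the Jacobi cancellation you sketch after bringing all three cyclic terms to the common prefactor $(-1)^{|x||z|}$, and only the order of your steps differs.
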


\begin{proof}
Let $D$ be a homogeneous derivation of degree $k$. By \Cref{lem:even-no-signs}
the displayed formula is the graded Wronskian
$xD(y)-(-1)^{(|x|+k)(|y|+k)}yD(x)$, and every identity below may be verified on
the sign-free expression.

The bracket has degree $k$, and
\[
        -(-1)^{(|x|+k)(|y|+k)}[y,x]_D
        =-(-1)^{|x||y|}\bigl(yD(x)-D(y)x\bigr)
        =-D(x)y+xD(y)=[x,y]_D ,
\]
which is the graded skew-symmetry of a bracket of degree $k$.

For the graded Jacobi identity, write $a=|x|$, $b=|y|$, $c=|z|$, and recall that
$D(w)$ and $D^{2}(w)$ have the same parity as $w$. Since $D$ is a derivation,
\[
\begin{aligned}
        D([x,y]_D)
        &=D\bigl(xD(y)\bigr)-D\bigl(D(x)y\bigr)\\
        &=D(x)D(y)+xD^{2}(y)-D^{2}(x)y-D(x)D(y)=xD^{2}(y)-D^{2}(x)y ,
\end{aligned}
\]
whence
\[
        [[x,y]_D,z]_D=[x,y]_D\,D(z)-D([x,y]_D)\,z
        =\bigl(xD(y)-D(x)y\bigr)D(z)-\bigl(xD^{2}(y)-D^{2}(x)y\bigr)z .
\]
Expanding this and its two cyclic permutations,
\[
\begin{aligned}
\relax[[x,y]_D,z]_D&=xD(y)D(z)-D(x)yD(z)-xD^{2}(y)z+D^{2}(x)yz,\\
[[y,z]_D,x]_D&=yD(z)D(x)-D(y)zD(x)-yD^{2}(z)x+D^{2}(y)zx,\\
[[z,x]_D,y]_D&=zD(x)D(y)-D(z)xD(y)-zD^{2}(x)y+D^{2}(z)xy.
\end{aligned}
\]
The graded Jacobi identity for a bracket of degree $k$ reads
\[
        (-1)^{ac}[[x,y]_D,z]_D+(-1)^{ab}[[y,z]_D,x]_D+(-1)^{bc}[[z,x]_D,y]_D=0 ,
\]
so we reorder every monomial of the second and third rows into the order
$x,y,z$. Each monomial of the second row is a product of three factors of
parities $b$, $c$ and $a$, in this order, and bringing the last one to the front
costs $(-1)^{a(b+c)}$; since $(-1)^{ab}(-1)^{a(b+c)}=(-1)^{ac}$, the second row
acquires the same prefactor as the first. Each monomial of the third row is a
product of factors of parities $c$, $a$ and $b$, and taking the first one to the
back costs $(-1)^{c(a+b)}$, with $(-1)^{bc}(-1)^{c(a+b)}=(-1)^{ac}$. Therefore
all three summands carry the common factor $(-1)^{ac}$, and
\[
\begin{aligned}
(-1)^{ac}[[x,y]_D,z]_D&=(-1)^{ac}\bigl(\phantom{{}-{}}xD(y)D(z)-D(x)yD(z)-xD^{2}(y)z+D^{2}(x)yz\bigr),\\
(-1)^{ab}[[y,z]_D,x]_D&=(-1)^{ac}\bigl(\phantom{{}-{}}D(x)yD(z)-D(x)D(y)z-xyD^{2}(z)+xD^{2}(y)z\bigr),\\
(-1)^{bc}[[z,x]_D,y]_D&=(-1)^{ac}\bigl(-xD(y)D(z)+D(x)D(y)z-D^{2}(x)yz+xyD^{2}(z)\bigr).
\end{aligned}
\]
Six monomials occur on the right, each of them twice and with opposite signs:
$xD(y)D(z)$ and $D^{2}(x)yz$ in the first and third rows, $D(x)yD(z)$ and
$xD^{2}(y)z$ in the first and second, $D(x)D(y)z$ and $xyD^{2}(z)$ in the second
and third. Hence the sum vanishes.
	
	For the transposed Leibniz rule, \Cref{lem:even-no-signs} gives
	\[
		[zx,y]_D=zxD(y)-D(z)xy-zD(x)y ,
		\qquad
		[x,zy]_D=xD(z)y+xzD(y)-D(x)zy .
	\]
	Multiplying the second identity by $(-1)^{|z||x|}$ turns $xD(z)$ into $D(z)x$,
	$xz$ into $zx$ and $D(x)z$ into $zD(x)$, so that the two terms in $D(z)$ cancel
	and
	\[
		[zx,y]_D+(-1)^{|z||x|}[x,zy]_D
		=2zxD(y)-2zD(x)y=2z[x,y]_D .
	\]
	
	Conversely, let $[-,-]$ be a transposed bracket of degree $k$ on $(A,\cdot)$ and
	set $D=[1,-]$, homogeneous of degree $k$. Since $k$ is even, the right hand side
	of \eqref{eq:parity-split} is $2D(zy)$, so
	\[
		D(zy)=zD(y)+(-1)^{|z||y|}yD(z)=zD(y)+D(z)y ,
	\]
	the second equality by \Cref{lem:even-no-signs}. Hence $D$ is a homogeneous
	derivation of degree $k$. Substituting back in \Cref{lem:unital-D} gives
	$[z,y]=2zD(y)-D(zy)=zD(y)-D(z)y=[z,y]_D$. Finally a derivation satisfies
	$D(1)=0$, so $[1,y]_D=D(y)$ and the two assignments are mutually inverse.
\end{proof}

\begin{theorem}\label{thm:unital-odd}
	Let $k$ be odd and let $(A,\cdot)$ be a unital graded commutative associative
	algebra. The assignment
	\[
		\theta\longmapsto[-,-]_\theta,
		\qquad
		[x,y]_\theta=(-1)^{|y|}(xy)\theta,
	\]
	is a bijection between $A^{k}$ and the transposed brackets of degree $k$ on
	$(A,\cdot)$. Its inverse sends a bracket to $\theta=[1,1]$.
\end{theorem}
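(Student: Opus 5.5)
The plan is to follow the pattern of \Cref{thm:unital-even}. In the converse direction, \Cref{lem:parity-split} should force $D=[1,-]$ to be multiplication by $\theta=[1,1]$ up to sign, and \Cref{lem:unital-D} then rebuilds the bracket. In the forward direction, the axioms for $[-,-]_\theta$ are checked directly; the key point is that $\theta$ has odd degree, so $\theta^2=0$.

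\emph{Converse.} Let $[-,-]$ be a transposed bracket of odd degree $k$ and set $D=[1,-]$. Since $1+(-1)^k=0$, \eqref{eq:parity-split} becomes
\[
zD(y)+(-1)^{(|z|+k)(|y|+k)}yD(z)=0 .
\]
I will specialise to $y=1$. Then $(|z|+k)k\equiv|z|+1$ modulo two, so with $\theta=D(1)=[1,1]\in A^k$ this gives
\[
D(z)=(-1)^{|z|}z\theta .
\]
Substituting into \Cref{lem:unital-D}, and using $(-1)^{k|z|}=(-1)^{|z|}$, yields
\[
[z,y]=2(-1)^{|y|}zy\theta-(-1)^{|z|}(-1)^{|z|+|y|}zy\theta=(-1)^{|y|}(zy)\theta .
\]
This is exactly $[z,y]_\theta$. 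It also shows injectivity of $\theta\mapsto[-,-]_\theta$, since $[1,1]_\theta=\theta$.

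\emph{Forward direction.} Fix $\theta\in A^k$. The bracket $[x,y]_\theta$ visibly has degree $k$. The remaining checks are as follows.
\begin{itemize}
\item For graded skew-symmetry, write $yx=(-1)^{|x||y|}xy$ and reduce $(|x|+1)(|y|+1)$ modulo two. Then $-(-1)^{(|x|+1)(|y|+1)}(-1)^{|x|+|x||y|}=(-1)^{|y|}$, as required.
\item For the transposed rule \eqref{eq:graded-transposed-leibniz}, we have $[zx,y]_\theta=(-1)^{|y|}zxy\theta$. Also $(-1)^{|z|(|x|+1)}[x,zy]_\theta$ reduces to the same term after moving $z$ past $x$. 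Their sum is therefore $2z[x,y]_\theta$.
\item The Jacobi identity is trivial. Every iterated bracket $[[x,y]_\theta,z]_\theta$ is, up to sign, $xyz\theta^2$. Graded commutativity gives $\theta^2=-\theta^2$ for odd $\theta$, so $\theta^2=0$ in characteristic zero. Hence all double brackets vanish.
\end{itemize}

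The only real obstacle is sign bookkeeping. The main care points are the parity reduction $(|z|+k)k\equiv |z|+1$ in the specialisation $y=1$, and tracking the sign in the skew-symmetry check. Everything else is mechanical.
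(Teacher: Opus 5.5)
Your proposal is correct and follows the paper's proof essentially step for step. You reduce \eqref{eq:parity-split} for odd $k$, specialise at the unit to get $D(y)=(-1)^{|y|}y\theta$, rebuild the bracket via \Cref{lem:unital-D}, and check the three axioms directly using $\theta^2=0$. The only cosmetic difference is that you set $y=1$ where the paper sets $z=1$, which gives the same formula by symmetry.
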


\begin{proof}
	Suppose first that $[-,-]$ is a transposed bracket of degree $k$ on $(A,\cdot)$,
	and set $D=[1,-]$ and $\theta:=D(1)=[1,1]$. Since the bracket has degree $k$ and
	$|1|=0$, we have $\theta\in A^{k}$.
	
	As $k$ is odd, the right hand side of \eqref{eq:parity-split} vanishes, and the
	lemma reduces to
	\begin{equation}\label{eq:odd-relation}
		zD(y)+(-1)^{(|z|+k)(|y|+k)}yD(z)=0 .
	\end{equation}
	Take $z=1$ in \eqref{eq:odd-relation}. The exponent becomes
	$k(|y|+k)\equiv k|y|+k\equiv|y|+1$, because $k$ is odd, so
	\[
		D(y)-(-1)^{|y|}\,y\theta=0,
		\qquad\text{that is}\qquad
		D(y)=(-1)^{|y|}\,y\theta .
	\]
	Substituting this into \Cref{lem:unital-D}, and using
	$zD(y)=(-1)^{|y|}(zy)\theta$, $D(zy)=(-1)^{|z|+|y|}(zy)\theta$ and
	$(-1)^{k|z|}=(-1)^{|z|}$, we obtain
	\[
		[z,y]=2(-1)^{|y|}(zy)\theta-(-1)^{|z|}(-1)^{|z|+|y|}(zy)\theta
		=(-1)^{|y|}(zy)\theta ,
	\]
	which is the asserted formula.
	
	Conversely, let $\theta\in A^{k}$ and let $[x,y]_\theta=(-1)^{|y|}(xy)\theta$.
	This is homogeneous of degree $k$, since $|x|+|y|+|\theta|=|x|+|y|+k$. We check
	the three required identities. Write $a=|x|$ and $b=|y|$.
	
	Since $|\theta|=k$ is odd, graded commutativity gives
	$\theta\theta=(-1)^{k^2}\theta\theta=-\theta\theta$, so $2\theta^2=0$ and
	therefore
	\begin{equation}\label{eq:theta-square}
		\theta^2=0 ,
	\end{equation}
	the characteristic of $\mathbb K$ being different from two.
	
	For the graded skew-symmetry, $yx=(-1)^{ab}xy$ gives
	\[
		-(-1)^{(a+k)(b+k)}[y,x]_\theta
		=-(-1)^{ab+a+b+1}(-1)^{a}(-1)^{ab}(xy)\theta
		=-(-1)^{b+1}(xy)\theta
		=[x,y]_\theta ,
	\]
	where $(a+k)(b+k)\equiv ab+a+b+1$ because $k$ is odd.
	
	For the graded Jacobi identity it suffices to observe that every iterated
	bracket vanishes. Indeed
	\[
		[[x,y]_\theta,z]_\theta
		=(-1)^{|z|+b}\bigl((xy)\theta z\bigr)\theta
		=(-1)^{|z|+b+k|z|}(xyz)\,\theta^2=0
	\]
	by \eqref{eq:theta-square}, and likewise for the other two summands.
	
	For the transposed Leibniz rule, on the one hand
	$[zx,y]_\theta=(-1)^{b}(zxy)\theta$. On the other hand, using
	$(-1)^{k|z|}=(-1)^{|z|}$ and $xzy=(-1)^{a|z|}zxy$,
	\[
		(-1)^{|z|(a+k)}[x,zy]_\theta
		=(-1)^{|z|a+|z|}(-1)^{|z|+b}(xzy)\theta
		=(-1)^{b}(zxy)\theta .
	\]
	Their sum is $2(-1)^{b}(zxy)\theta=2z[x,y]_\theta$.
	
	Finally $[1,1]_\theta=\theta$, so the two assignments are mutually inverse.
\end{proof}

\begin{remark}\label{rmk:jacobi-automatic}
	Neither \Cref{thm:unital-even} nor \Cref{thm:unital-odd} uses the graded Jacobi
	identity in the direction which starts from a transposed bracket:
	\Cref{lem:unital-D} uses only \eqref{eq:graded-transposed-leibniz}, and
	\Cref{lem:parity-split} adds only graded skew-symmetry. Over a unital base,
	therefore, a bilinear map of degree $k$ which is graded skew-symmetric and
	satisfies \eqref{eq:graded-transposed-leibniz} satisfies the graded Jacobi
	identity as well.
\end{remark}

\begin{corollary}\label{cor:rigidity}
	Let $k$ be odd, and let $(A,\cdot)$ be a unital graded commutative associative
	algebra with $A^{k}=0$. Then the only transposed bracket of degree $k$ on
	$(A,\cdot)$ is the zero bracket. This is the case, for every odd $k<0$, whenever
	$A$ is non-negatively graded.
\end{corollary}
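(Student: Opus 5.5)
The corollary follows directly from \Cref{thm:unital-odd}, so the plan is a short reduction. Let $k$ be odd and let $[-,-]$ be a transposed bracket of degree $k$ on the unital graded commutative algebra $(A,\cdot)$. By \Cref{thm:unital-odd} the bracket has the form $[x,y]=(-1)^{|y|}(xy)\theta$ with $\theta=[1,1]$. Since $|1|=0$ and the bracket has degree $k$, $\theta$ lies in $A^{k}$. The hypothesis $A^{k}=0$ then forces $\theta=0$, and therefore $[x,y]=0$ for all homogeneous $x,y$. Bilinearity extends this to all of $A$.

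It is also worth checking that nothing is lost in passing through the bijection. Injectivity of $\theta\mapsto[-,-]_\theta$ already gives uniqueness. Conversely, the zero bracket is a transposed bracket, being $[-,-]_0$, so the zero bracket is exactly the one that survives.

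For the second sentence, assume $A$ is non-negatively graded, so that $A^{i}=0$ for all $i<0$. For every odd $k<0$ this gives $A^{k}=0$, and the first part applies.

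I expect no real obstacle here. The only point needing care is to confirm that \Cref{thm:unital-odd} requires nothing beyond unitality and the odd parity of $k$. In particular it does not assume $A^k\neq0$, so its conclusion legitimately specialises to $\theta=0$.
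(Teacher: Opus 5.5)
Your argument is correct and is exactly the reduction the paper intends, since the corollary is stated without separate proof as an immediate consequence of \Cref{thm:unital-odd}: every transposed bracket of odd degree $k$ equals $[-,-]_\theta$ with $\theta=[1,1]\in A^{k}=0$, and a non-negative grading gives $A^k=0$ for every $k<0$. One small point of wording: uniqueness comes from the direction of \Cref{thm:unital-odd} saying that every transposed bracket is of the form $[-,-]_\theta$, that is, surjectivity of $\theta\mapsto[-,-]_\theta$ onto the transposed brackets, and not from injectivity, which plays no role here; your first paragraph already uses the correct direction.
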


\begin{example}\label{ex:theta}
	The hypothesis $A^{k}=0$ cannot be removed. Let $k=-1$ and let
	$A=\Lambda(\theta)$ be the exterior algebra on one generator placed in degree
	$-1$, so that $A^0=\mathbb K\cdot 1$, $A^{-1}=\mathbb K\theta$ and
	$\theta^2=0$. Then $[x,y]=(-1)^{|y|}(xy)\theta$ is a non-zero transposed bracket
	of degree $-1$ on $(A,\cdot)$, with $[1,1]=\theta$.
\end{example}

We collect three comments on the role played by the parity of $k$.
\begin{enumerate}
\item \Cref{thm:unital-even} does not extend to odd degrees. The reason is
visible in \eqref{eq:wronskian-nosign}: for $k$ odd, $|D(x)|$ and $|x|$ have
opposite parities, that rewriting is unavailable, and in the proof of the
transposed Leibniz rule the two terms in $D(z)$ add instead of cancelling, the
discrepancy being $(1-(-1)^{k})xyD(z)$. Concretely, let
$\mathcal G=\Lambda(\xi)$ with $|\xi|=1$ and let $D$ be the derivation of degree
$-1$ with $D(\xi)=1$. Then $[x,y]_D=xD(y)-(-1)^{(|x|-1)(|y|-1)}yD(x)$ gives
$[1,\xi]_D=1$, whereas $2\xi[1,\xi]_D=2\xi$ and $[\xi,\xi]_D-[1,\xi\xi]_D=0$,
so \eqref{eq:graded-transposed-leibniz} fails.

\item The ternary constructions of the ungraded theory are obstructed as well.
If $(L,\cdot,[-,-])$ is a transposed Poisson algebra and $D$ is a derivation of
both operations, then
\[
        [x,y,z]_D=D(x)[y,z]+D(y)[z,x]+D(z)[x,y]
\]
makes $L$ into a $3$-Lie algebra \cite{Bai2023}, and every step of that argument
passes through the cyclic identity \eqref{eq:cyclic-basic}. For $k$ even the
product, the bracket and $D$ are all even operations, so the ungraded identities
transfer verbatim once the Koszul signs are inserted. For $k$ odd the cyclic
identity is no longer available, by \Cref{prop:master-identity}, and the
construction fails at its first step. We do not pursue the graded ternary theory
here.

\item The parity of $k$ decides the shape of the classification, not its
outcome. The classifying object is $\operatorname{Der}^k(A)$ in one case and
$A^k$ in the other, and both depend on $k$ itself. On $\Lambda(\xi)$ with
$|\xi|=1$ the transposed brackets of degree $-1$ reduce to zero while those of
degree $+1$ form a line, and on $\Lambda(\theta,\xi)$ with $|\theta|=-1$ and
$|\xi|=1$ the transposed brackets of degree $0$ form a plane while those of
degree $-2$ form a line.
\end{enumerate}

\section{Transposed Gerstenhaber algebras and Lie--Rinehart rigidity}\label{sec:gerstenhaber}

We now specialise to $k=-1$, the degree of the Gerstenhaber bracket, and
therefore the one in which the odd-degree rigidity of \Cref{sec:graded} meets
the geometric examples.

\begin{definition}
	A \emph{transposed Gerstenhaber algebra} is a graded transposed Poisson algebra
	of degree $-1$. Thus it is a triple $(\mathcal G,\cdot,[-,-])$ such that
	$(\mathcal G,\cdot)$ is graded commutative associative, $[-,-]$ is a graded Lie
	bracket of degree $-1$, and
	\[
		2z[x,y]=[zx,y]+(-1)^{|z|(|x|-1)}[x,zy]
	\]
	for homogeneous $x,y,z\in\mathcal G$.
\end{definition}

\begin{remark}
	This is not merely the super version of a transposed Poisson algebra. The
	super version corresponds to keeping only the parity of the grading and to an
	even bracket. Here the bracket has degree $-1$, hence it is odd. This
	distinction is essential in the signs and in the identities which follow.
\end{remark}

Specialising \Cref{thm:unital-odd} to $k=-1$ describes all transposed
Gerstenhaber brackets over a unital base.

\begin{theorem}\label{thm:transposed-gerstenhaber-unital}
	Let $(\mathcal G,\cdot)$ be a unital graded commutative associative algebra. The
	transposed Gerstenhaber brackets on $(\mathcal G,\cdot)$ are exactly the
	brackets
	\[
		[x,y]_\theta=(-1)^{|y|}(xy)\theta,
		\qquad \theta\in\mathcal G^{-1},
	\]
	and $\theta$ is recovered from the bracket as $\theta=[1,1]$. In particular, if
	$\mathcal G$ is non-negatively graded, then the only transposed Gerstenhaber
	bracket on $(\mathcal G,\cdot)$ is the zero bracket.
\end{theorem}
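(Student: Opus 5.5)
The statement is the specialisation $k=-1$ of \Cref{thm:unital-odd}, so the plan is to read off both halves from results already proved and add nothing new beyond checking that the definitions match. By definition, a transposed Gerstenhaber algebra is a graded transposed Poisson algebra of degree $-1$. Its compatibility rule $2z[x,y]=[zx,y]+(-1)^{|z|(|x|-1)}[x,zy]$ is exactly \eqref{eq:graded-transposed-leibniz} with $k=-1$. Likewise, the graded skew-symmetry and graded Jacobi identity in the definition of a graded Lie bracket of degree $k$ become, at $k=-1$, the requirement that $\mathcal G[1]$ is a graded Lie algebra with the shifted signs. Consequently, the set of transposed Gerstenhaber brackets on a unital $(\mathcal G,\cdot)$ coincides with the set of transposed brackets of degree $-1$ on $(\mathcal G,\cdot)$ in the sense of \Cref{sec:graded}.

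Since $k=-1$ is odd, \Cref{thm:unital-odd} applies directly. It says that $\theta\mapsto[-,-]_\theta$, with $[x,y]_\theta=(-1)^{|y|}(xy)\theta$, is a bijection from $\mathcal G^{-1}$ onto these brackets, with inverse $[-,-]\mapsto[1,1]$. This gives both that every transposed Gerstenhaber bracket has the stated form and that every such formula is a transposed Gerstenhaber bracket. It also gives the recovery $\theta=[1,1]$. The sign in the formula needs no adjustment, because $(-1)^{k|z|}=(-1)^{|z|}$ was already used inside the proof of \Cref{thm:unital-odd} for odd $k$.

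For the final assertion, a non-negatively graded $\mathcal G$ has $\mathcal G^{-1}=0$. The bijection then leaves only $\theta=0$, hence only the zero bracket. Equivalently, this is \Cref{cor:rigidity} with $k=-1<0$.

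There is no genuine obstacle. The one point needing care is the identification of conventions: the definition of a Gerstenhaber bracket in \Cref{sec:preliminaries} uses the signs $(-1)^{(|x|-1)(|y|-1)}$, and these must agree with $(-1)^{(|x|+k)(|y|+k)}$ at $k=-1$. They do, since $|x|-1\equiv|x|+1$ modulo two. With that checked, the proof is a direct citation.
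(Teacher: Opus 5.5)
Your proposal is correct and matches the paper. The paper proves this theorem simply by specialising \Cref{thm:unital-odd} to $k=-1$, and the non-negatively graded case is \Cref{cor:rigidity}. One small simplification: at $k=-1$ one has $|x|+k=|x|-1$ exactly, so your sign-convention check needs no reduction modulo two.
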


This settles the second half of \Cref{pb:LR} for the graded commutative algebra
$\Lambda_A L$, which is non-negatively graded and unital by
\Cref{rmk:unital-convention}. The first half is settled in
\Cref{sec:LR-obstruction} below. The theory is not empty, as \Cref{ex:theta}
shows, but its non-zero examples live in negative degrees, and none of them is
of geometric origin.

\begin{example}
	Every graded commutative associative algebra with the zero bracket is a
	transposed Gerstenhaber algebra, and by
	\Cref{thm:transposed-gerstenhaber-unital} these are the only examples over a
	unital base which is non-negatively graded.
\end{example}

We now fix both operations, and ask when a given Gerstenhaber algebra happens to
be transposed. Let $(\mathcal G,\cdot,[-,-])$ be a Gerstenhaber algebra. There is no reason for
the Gerstenhaber Leibniz rule to imply the transposed Leibniz rule. We measure
the failure of the latter by the trilinear map
\[
	\Theta_{\mathcal G}(x,y,z)
	=
	[zx,y]
	+(-1)^{|z|(|x|-1)}[x,zy]
	-2z[x,y],
\]
defined on homogeneous elements and extended by linearity.

\begin{proposition}
	A Gerstenhaber algebra $(\mathcal G,\cdot,[-,-])$ is a transposed
	Gerstenhaber algebra if and only if
	\[
		\Theta_{\mathcal G}=0 .
	\]
\end{proposition}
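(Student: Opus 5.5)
This proposition is essentially a restatement of the definition, so the plan is to unwind the definitions on both sides and observe that they coincide. A Gerstenhaber algebra $(\mathcal G,\cdot,[-,-])$ already supplies everything a transposed Gerstenhaber algebra requires except the compatibility rule. Indeed, $(\mathcal G,\cdot)$ is graded commutative associative, and $[-,-]$ is a bracket of degree $-1$ making $\mathcal G[1]$ a graded Lie algebra, which is exactly a graded Lie bracket of degree $k=-1$ in the sense of the preliminaries. So the only condition left to check is the transposed Leibniz rule
\[
	2z[x,y]=[zx,y]+(-1)^{|z|(|x|-1)}[x,zy]
\]
for all homogeneous $x,y,z$. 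This is \eqref{eq:graded-transposed-leibniz} with $k=-1$, where $(-1)^{|z|(|x|+k)}=(-1)^{|z|(|x|-1)}$.

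First I will note that, for homogeneous $x,y,z$, this identity is literally the equation $\Theta_{\mathcal G}(x,y,z)=0$, since $\Theta_{\mathcal G}$ is defined as the right-hand side minus the left-hand side.

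Second, I will pass from homogeneous elements to the whole trilinear map. $\Theta_{\mathcal G}$ is defined on homogeneous triples and extended by linearity, and $\mathcal G=\bigoplus_i\mathcal G^i$. Hence $\Theta_{\mathcal G}=0$ as a trilinear map if and only if it vanishes on all homogeneous triples. By the paper's convention, that the sign identities are imposed on homogeneous elements, this is exactly the transposed rule.

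Both implications then follow at once. No step is a genuine obstacle. The only point needing care is the bookkeeping that the bracket axioms of a Gerstenhaber algebra coincide with those of a degree $-1$ graded Lie bracket, with the same shifted skew-symmetry and Jacobi signs. That matching was already recorded in the preliminaries.
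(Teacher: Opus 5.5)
Your proposal is correct and takes the same approach as the paper, whose one-line proof observes that $\Theta_{\mathcal G}=0$ is exactly the transposed Leibniz rule in degree $-1$. Your additional bookkeeping is accurate and simply makes explicit what the paper leaves implicit: the Gerstenhaber bracket axioms coincide with those of a degree $-1$ graded Lie bracket, and the identity extends by linearity from homogeneous triples.
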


\begin{proof}
	This is exactly the transposed Leibniz rule in degree $-1$.
\end{proof}

The usefulness of $\Theta_{\mathcal G}$ is that it can be computed by means of
the ordinary Gerstenhaber rule alone, and that the resulting expression is
remarkably small.

\begin{lemma}\label{lem:theta-formula}
	For all homogeneous $x,y,z$ in a Gerstenhaber algebra $\mathcal G$,
	\[
		\Theta_{\mathcal G}(x,y,z)
		=
		(-1)^{|x|(|y|-1)}[z,y]\,x
		+
		(-1)^{|z|(|x|-1)}[x,z]\,y .
	\]
\end{lemma}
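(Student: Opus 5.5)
The identity should follow by expanding the two brackets in $\Theta_{\mathcal G}$ that contain a product, using the Gerstenhaber Leibniz rule in the first and in the second argument. The terms of the form $z[x,y]$ should then cancel against $-2z[x,y]$, leaving exactly the two displayed terms. No skew-symmetry manipulations beyond those already packaged in \eqref{eq:gerst-leibniz-first} should be needed.

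First, expand $[zx,y]$ with \eqref{eq:gerst-leibniz-first}, with the roles $(y,z,x)$ there played by $(z,x,y)$ here. This gives
\[
[zx,y]=(-1)^{|x|(|y|-1)}[z,y]\,x+z[x,y].
\]
Second, expand $[x,zy]$ with the Gerstenhaber Leibniz rule in the second argument, which is \eqref{eq:graded-poisson-leibniz} with $k=-1$:
\[
[x,zy]=[x,z]\,y+(-1)^{(|x|-1)|z|}z[x,y].
\]
Multiplying this by the prefactor $(-1)^{|z|(|x|-1)}$ from $\Theta_{\mathcal G}$ gives $(-1)^{|z|(|x|-1)}[x,z]\,y+z[x,y]$, since the sign squares to $1$.

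Adding the two expansions and subtracting $2z[x,y]$, the two copies of $z[x,y]$ cancel the correction term. What remains is $(-1)^{|x|(|y|-1)}[z,y]\,x+(-1)^{|z|(|x|-1)}[x,z]\,y$, which is the asserted formula.

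The only point requiring care is the sign bookkeeping in the first expansion. One must check that \eqref{eq:gerst-leibniz-first} is applied with the correct substitution, so that the sign is $(-1)^{|x|(|y|-1)}$ and is attached to $[z,y]\,x$ with $x$ on the right, as in the statement, and not to $x[z,y]$. I expect no genuine obstacle: the lemma is a two-line consequence of the two Leibniz rules.
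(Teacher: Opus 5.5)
Your proposal is correct and is the paper's own proof: expand $[zx,y]$ with the first-slot rule \eqref{eq:gerst-leibniz-first} and $[x,zy]$ with \eqref{eq:graded-poisson-leibniz} at $k=-1$, and the two copies of $z[x,y]$ cancel against $-2z[x,y]$. Your sign bookkeeping, including the substitution into \eqref{eq:gerst-leibniz-first} and the squaring of the prefactor $(-1)^{|z|(|x|-1)}$, checks out.
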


\begin{proof}
	By \Cref{eq:gerst-leibniz-first} applied to $[zx,y]$ and by
	\Cref{eq:graded-poisson-leibniz} applied to $[x,zy]$,
	\[
		[zx,y]=(-1)^{|x|(|y|-1)}[z,y]x+z[x,y],
		\qquad
		[x,zy]=[x,z]y+(-1)^{(|x|-1)|z|}z[x,y].
	\]
	Substituting these two expressions in the definition of $\Theta_{\mathcal G}$,
	the three multiples of $z[x,y]$ combine into
	$z[x,y]+z[x,y]-2z[x,y]=0$.
\end{proof}

The formula has an immediate consequence, which is the mechanism behind all the
rigidity statements of this section.

\begin{theorem}\label{thm:unital-gerstenhaber-rigid}
	Let $(\mathcal G,\cdot,[-,-])$ be a Gerstenhaber algebra whose underlying
	graded commutative algebra is unital. Then $\mathcal G$ is a transposed
	Gerstenhaber algebra if and only if $[-,-]=0$.
\end{theorem}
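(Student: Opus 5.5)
The plan is to use \Cref{lem:theta-formula}, which reduces the transposed condition to the vanishing of the right-hand side there, and then to evaluate that expression at the unit, as the introduction announces. The direction ``$[-,-]=0$ implies transposed'' is immediate: every term of $\Theta_{\mathcal G}$ is a bracket, so $\Theta_{\mathcal G}=0$. Alternatively, the zero bracket is the case $\theta=0$ of \Cref{thm:transposed-gerstenhaber-unital}.

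For the converse, assume $\Theta_{\mathcal G}=0$ and put $x=1$ in \Cref{lem:theta-formula}. Since $|1|=0$, both signs are $+1$, and we obtain
\[
0=\Theta_{\mathcal G}(1,y,z)=[z,y]\cdot 1+[1,z]\,y=[z,y]+[1,z]\,y
\]
for all homogeneous $y,z$. So it suffices to prove that $D=[1,-]$ vanishes, and then $[z,y]=-D(z)\,y=0$.

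To kill $D$, I would use the Gerstenhaber structure itself. The Leibniz rule \Cref{eq:graded-poisson-leibniz} with $x=1$, $y=z=1$ gives $[1,1]=[1,1]\cdot1+(-1)^{-1\cdot 0}\,1\cdot[1,1]=2[1,1]$, so $[1,1]=0$. More generally, with $y=1$ it gives $[x,z]=[x,1]z+[x,z]$, hence $[x,1]z=0$ for all $z$. Taking $z=1$ yields $[x,1]=0$ for every $x$. By graded skew-symmetry, $[1,x]=\pm[x,1]=0$, so $D=0$. Substituting back, $[z,y]=0$ for all homogeneous $y,z$, and by linearity the bracket vanishes.

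There is no real obstacle. The only care needed is the sign bookkeeping when setting $x=1$ in \Cref{lem:theta-formula} and in the Leibniz rule, which is trivial because $|1|=0$. As a consistency check, this argument agrees with \Cref{thm:transposed-gerstenhaber-unital}: $\theta=[1,1]=0$ forces the bracket to be zero. That gives a second route, namely to invoke \Cref{thm:unital-odd} with $k=-1$ and use the derivation property of $[1,-]$ to see that $\theta=0$.
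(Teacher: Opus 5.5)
Your proof is correct and is essentially the paper's argument: evaluate \Cref{lem:theta-formula} at $x=1$ to get $[z,y]+[1,z]\,y=0$, then use the Gerstenhaber Leibniz rule at the unit to show $[1,-]=0$. The only cosmetic difference is that the paper gets $[1,w]=0$ directly from the first-slot rule \eqref{eq:gerst-leibniz-first} with $y=z=1$, while you use the second-slot rule \eqref{eq:graded-poisson-leibniz} and then graded skew-symmetry.
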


\begin{proof}
	In a Gerstenhaber algebra with unit one has $[1,w]=0$ for every $w$, as follows
	from \Cref{eq:gerst-leibniz-first} with $y=z=1$. Assume
	$\Theta_{\mathcal G}=0$ and evaluate the formula in \Cref{lem:theta-formula} at $x=1$:
	\[
		0=\Theta_{\mathcal G}(1,y,z)=[z,y]+[1,z]\,y=[z,y].
	\]
	Hence the bracket vanishes. The converse is clear.
\end{proof}

In the next subsection we apply this to the Gerstenhaber algebra $\Lambda_A L$
associated with a Lie--Rinehart pair, and then we isolate the part of the
conclusion whose proof stays in degree one and never evaluates at the unit.

\subsection{The transposed obstruction on \texorpdfstring{$\Lambda_A L$}{Lambda A L}}
\label{sec:LR-obstruction}

Throughout this subsection $(A,L)$ is a Lie--Rinehart algebra over $(R,A)$, and
$\Lambda_A L$ is endowed with its Schouten--Rinehart bracket.

\begin{theorem}\label{thm:LR-complete}
	$(\Lambda_A L,\wedge,[-,-]_{SR})$ is a transposed Gerstenhaber algebra if and
	only if the anchor of $(A,L)$ vanishes and $L$ is abelian.
\end{theorem}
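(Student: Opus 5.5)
The plan is to reduce the statement to \Cref{thm:unital-gerstenhaber-rigid}. By \Cref{rmk:unital-convention} the base $A$ is unital and $\Lambda^0_AL=A$, so $1\in A$ is a unit for the wedge product on $\Lambda_AL$. By \cite{Huebschmann1998}, $(\Lambda_AL,\wedge,[-,-]_{SR})$ is a Gerstenhaber algebra. \Cref{thm:unital-gerstenhaber-rigid} then says that $\Lambda_AL$ is transposed Gerstenhaber if and only if $[-,-]_{SR}$ vanishes identically. What remains is to show that $[-,-]_{SR}=0$ is equivalent to the anchor vanishing together with $L$ being abelian.

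For the forward direction, suppose $[-,-]_{SR}=0$. Restricted to degree one, the Schouten--Rinehart bracket is the Lie bracket of $L$, so $[X,Y]=0$ for all $X,Y\in L$. Restricted to mixed degrees $(1,0)$, it is the anchor, $[X,a]_{SR}=X(a)$, so $\rho(X)(a)=0$ for all $X$ and $a$, that is, $\rho=0$.

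For the converse, assume $\rho=0$ and $[L,L]=0$. The plan is to show that $[-,-]_{SR}$ vanishes on every pair of homogeneous elements. The bracket vanishes on generators:
\begin{itemize}
\item $[a,b]_{SR}=0$ always;
\item $[X,a]_{SR}=\rho(X)(a)=0$, and $[a,X]_{SR}=0$ by graded skew-symmetry;
\item $[X,Y]_{SR}=[X,Y]=0$.
\end{itemize}
Since $\Lambda_AL$ is generated as an algebra by $A$ and $L$ (again by \Cref{rmk:unital-convention}), two inductions finish the argument. First, the Leibniz rule in the second slot, \eqref{eq:graded-poisson-leibniz} with $k=-1$, shows that for each generator $U$ the operator $[U,-]_{SR}$ kills all products of generators. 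Second, the first-slot rule \eqref{eq:gerst-leibniz-first} propagates the vanishing to arbitrary first arguments. Alternatively, one can read the vanishing off directly from the explicit formula on decomposables together with $A$-bilinearity. That bilinearity holds here because the Leibniz rule $[X,aY]=a[X,Y]+X(a)Y$ loses its anchor term.

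I do not expect a real obstacle. The only point needing care is the generation argument in the converse. There one must check that the two Leibniz rules, which come with signs, really propagate zero from generators to all of $\Lambda_AL$. Since every term on the right-hand side of each rule contains a bracket of lower total word length, the vanishing follows by induction on word length with no sign bookkeeping actually mattering.
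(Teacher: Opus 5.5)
Your proposal is correct and follows the paper's proof. You reduce via \Cref{thm:unital-gerstenhaber-rigid} to the vanishing of $[-,-]_{SR}$, then read off $\rho=0$ and $[L,L]=0$ from $[X,a]_{SR}=X(a)$ and $[X,Y]_{SR}=[X,Y]$. Your induction on word length in the converse spells out a step the paper treats as immediate: a bracket satisfying both Leibniz rules and vanishing on the generators $A\cup L$ vanishes identically.
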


\begin{proof}
	The graded commutative algebra $\Lambda_A L$ is unital, so
	\Cref{thm:unital-gerstenhaber-rigid} applies and transposedness is equivalent to
	$[-,-]_{SR}=0$. Since $[X,a]_{SR}=X(a)$ and $[X,Y]_{SR}=[X,Y]$ for $a\in A$ and
	$X,Y\in L$, this is equivalent to $\rho=0$ and $[L,L]=0$. Conversely, if the
	anchor and the bracket of $L$ both vanish, then $[-,-]_{SR}=0$ and the
	transposed identity is immediate.
\end{proof}

The result admits a reformulation which does not refer to a fixed Lie--Rinehart
structure. For $A$ unital and $L$ an $A$-module, Huebschmann's correspondence is
a bijection between the Lie--Rinehart structures on $L$ over $A$ and the
Gerstenhaber brackets of degree $-1$ on $(\Lambda_A L,\wedge)$, under which the
anchor and the bracket are recovered as $\rho(X)(a)=[X,a]_{SR}$ and
$[X,Y]=[X,Y]_{SR}$. See \cite{Huebschmann1990}, and \cite[Thm~6.3]{Gatse2026} for a
self-contained derivation-based account.

\begin{corollary}\label{cor:LR-correspondence}
Let $A$ be a unital commutative $\mathbb K$-algebra and let $L$ be an
$A$-module. Under this correspondence, the transposed Gerstenhaber brackets on
$(\Lambda_A L,\wedge)$ correspond exactly to the trivial Lie--Rinehart
structure, in which both the anchor and the bracket of $L$ vanish.
\end{corollary}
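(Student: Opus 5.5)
The plan is to read the corollary as a direct combination of \Cref{thm:transposed-gerstenhaber-unital} (or, equivalently, \Cref{thm:unital-gerstenhaber-rigid}) with Huebschmann's correspondence. The statement is not quite symmetric: the correspondence runs between Lie--Rinehart structures on $L$ and Gerstenhaber brackets on $(\Lambda_A L,\wedge)$, whereas a transposed Gerstenhaber bracket is a priori only a graded Lie bracket of degree $-1$ satisfying the transposed rule. So the first step is to fix which brackets are being compared.

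First I would note that $\Lambda_A L$, with $L$ in degree one, is unital and non-negatively graded, with $\Lambda^0_AL=A$. By \Cref{thm:transposed-gerstenhaber-unital}, every transposed Gerstenhaber bracket on $(\Lambda_AL,\wedge)$ has the form $[x,y]_\theta$ with $\theta\in(\Lambda_AL)^{-1}=0$. Hence the only transposed Gerstenhaber bracket is the zero bracket. The zero bracket is in particular a Gerstenhaber bracket, since the Leibniz rule and the Jacobi identity hold trivially. So the set of transposed Gerstenhaber brackets equals the set of Gerstenhaber brackets that are transposed, and both consist of the single element $0$. The same conclusion follows from \Cref{thm:unital-gerstenhaber-rigid} if one starts from a Gerstenhaber bracket.

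Second, I would identify the image of the zero bracket under the correspondence. The recovery formulas give $\rho(X)(a)=[X,a]=0$ and $[X,Y]_L=[X,Y]=0$. This is the trivial Lie--Rinehart structure, and it is one: the conditions $[X,aY]=a[X,Y]+\rho(X)(a)Y$ and $\rho(aX)=a\rho(X)$ reduce to $0=0$. Conversely, the trivial structure is sent to the Schouten--Rinehart bracket, which vanishes in degrees $\le 1$ and hence vanishes identically. Indeed, $\Lambda_AL$ is generated in degrees $0$ and $1$ (\Cref{rmk:unital-convention}), and the Leibniz rule propagates the vanishing. This is also the content of \Cref{thm:LR-complete}. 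Since the correspondence is a bijection, transposed brackets correspond exactly to the trivial structure.

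The main obstacle is a bookkeeping point rather than a computation. One must make sure that the correspondence is applied only to genuine Gerstenhaber brackets, and that no transposed bracket fails to be Gerstenhaber. This is resolved by the rigidity result in the first step, which shows that the only transposed bracket is zero.
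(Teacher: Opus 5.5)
Your proposal is correct and matches the paper's proof. Both apply \Cref{thm:transposed-gerstenhaber-unital} with $(\Lambda_AL)^{-1}=0$ to leave only the zero bracket, then read off $\rho=0$ and $[L,L]=0$ from the recovery formulas; your extra checks (the zero bracket is Gerstenhaber, and the trivial structure maps back to it) are bookkeeping the paper leaves implicit.
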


\begin{proof}
The algebra $\Lambda_A L$ is unital and non-negatively graded, so by
\Cref{thm:transposed-gerstenhaber-unital} the only transposed Gerstenhaber
bracket on it is the zero bracket. Under the correspondence, this bracket has
$\rho(X)(a)=0$ and $[X,Y]=0$.
\end{proof}

In particular, no Gerstenhaber bracket on $\Lambda_A L$ escapes
\Cref{thm:LR-complete} by failing to be of Schouten--Rinehart type.

It is instructive to see the two halves of \Cref{thm:LR-complete} appear
separately in low degrees. The first argument evaluates at the unit, the second
does not.

\begin{proposition}\label{prop:anchor-vanishes}
	If the Gerstenhaber algebra $\Lambda_A L$ is transposed Gerstenhaber, then the
	anchor of $(A,L)$ is zero. Equivalently,
	\[
		X(a)=0
	\]
	for all $X\in L$ and $a\in A$.
\end{proposition}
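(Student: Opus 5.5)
The plan is to evaluate the obstruction $\Theta_{\Lambda_AL}$ of \Cref{lem:theta-formula} on a triple that isolates the anchor. The text says this first half of \Cref{thm:LR-complete} is the argument that evaluates at the unit. We therefore take $x=1$, $y=a\in A$ and $z=X\in L$, and use only the two relations recalled at the end of \Cref{sec:preliminaries}: $[X,a]_{SR}=X(a)$ and $[a,b]_{SR}=0$. We also need the vanishing $[1,w]_{SR}=0$, which holds in any unital Gerstenhaber algebra, as noted in the proof of \Cref{thm:unital-gerstenhaber-rigid}.

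First, compute $\Theta(1,a,X)$. Here $|x|=0$, $|y|=0$ and $|z|=1$. By \Cref{lem:theta-formula},
\[
\Theta(1,a,X)=(-1)^{0}[X,a]_{SR}\cdot 1+(-1)^{1\cdot(0-1)}[1,X]_{SR}\,a=X(a)-[1,X]_{SR}\,a .
\]
The second term vanishes because $[1,X]_{SR}=0$. This can also be seen directly: $[1,X]_{SR}$ is, up to sign, $[X,1]_{SR}=X(1)=0$, since $\rho(X)$ is a derivation of the unital algebra $A$. So $\Theta(1,a,X)=X(a)$.

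If $\Lambda_AL$ is transposed Gerstenhaber, then $\Theta\equiv0$, and hence $X(a)=0$ for all $X\in L$ and $a\in A$. Conversely, the equivalence in the statement is just the definition of a zero anchor, since $\rho(X)(a)=X(a)$.

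The only point needing care is bookkeeping of signs and argument positions in \Cref{lem:theta-formula}, to make sure the term $[z,y]x$ really produces $[X,a]_{SR}$ and not $[a,X]_{SR}$. Even if the convention gave the other order, graded skew-symmetry gives $[a,X]_{SR}=-(-1)^{(0-1)(1-1)}[X,a]_{SR}=-X(a)$, so the conclusion $X(a)=0$ is unaffected. As a cross-check, one can instead substitute $x=1$, $y=X$, $z=a$ directly into the transposed rule $2z[x,y]=[zx,y]+(-1)^{|z|(|x|-1)}[x,zy]$. This gives $2a[1,X]=[a,X]+[1,aX]$, that is, $0=-X(a)+0$, which is the same conclusion.
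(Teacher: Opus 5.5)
Your proof is correct and is essentially the paper's argument. The paper substitutes $x=1$, $y=X$, $z=a$ directly into the transposed Gerstenhaber identity and uses $[1,X]_{SR}=[1,aX]_{SR}=0$ and $[a,X]_{SR}=-X(a)$, which is exactly your cross-check; your main route through \Cref{lem:theta-formula} at $(1,a,X)$ is the same evaluation at the unit with the arguments permuted, and your signs are right.
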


\begin{proof}
	Let $a\in A$ and $X\in L$. Apply the transposed Gerstenhaber identity in
	$\Lambda_A L$ with $x=1$, $y=X$ and $z=a$. Since $|1|=|a|=0$ and $|X|=1$, we get
	\[
		2a[1,X]_{SR}
		=
		[a,X]_{SR}+[1,aX]_{SR}.
	\]
	Now $[1,X]_{SR}=0$ and $[1,aX]_{SR}=0$, while
	\[
		[a,X]_{SR}=-[X,a]_{SR}=-X(a).
	\]
	Hence $X(a)=0$. Since $a$ and $X$ were arbitrary, the anchor vanishes.
\end{proof}

Thus the transposed identity immediately kills the geometric part of the
Lie--Rinehart structure. In particular, the usual Gerstenhaber algebra of
multivector fields on a smooth manifold is not transposed Gerstenhaber unless
all vector fields act trivially on functions.

\begin{example}
	Let $M$ be a smooth manifold and let
	\[
		A=C^\infty(M),\qquad L=\mathfrak X(M).
	\]
	Then $\Lambda_A L$ is the Gerstenhaber algebra of multivector fields with the
	Schouten bracket, and by \Cref{thm:LR-complete} it is not transposed
	Gerstenhaber, unless $M$ is a point.
\end{example}

\Cref{thm:LR-complete} closes \Cref{pb:LR}, and it does so by evaluating the
transposed identity at the unit. We now isolate the part of that conclusion
which is reached without ever leaving degree one. The results below are
necessary conditions and are weaker than \Cref{thm:LR-complete}. Their interest
is that their proofs use only the bracket of $L$ and the exterior product, so
they remain available whenever $\Lambda^\bullet L$ is defined and the evaluation
at a unit is not.

\begin{proposition}\label{prop:exterior-obstruction}
	Assume that $\Lambda_A L$ is transposed Gerstenhaber. Then, for all
	$X,Y,Z\in L$,
	\[
		[Z,Y]\wedge X+[X,Z]\wedge Y=0
	\]
	in $\Lambda_A^2L$.
\end{proposition}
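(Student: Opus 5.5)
The plan is to use \Cref{lem:theta-formula} directly on degree-one elements, without ever evaluating at the unit. Assume $\Lambda_A L$ is transposed Gerstenhaber, so $\Theta_{\Lambda_A L}=0$ by the proposition characterising transposed Gerstenhaber algebras. Take $x=X$, $y=Y$, $z=Z$ in $L=\Lambda^1_AL$, so that $|x|=|y|=|z|=1$.

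In \Cref{lem:theta-formula} the signs are $(-1)^{|x|(|y|-1)}=(-1)^{0}=1$ and $(-1)^{|z|(|x|-1)}=1$. Hence
\[
	0=\Theta(X,Y,Z)=[Z,Y]_{SR}\wedge X+[X,Z]_{SR}\wedge Y .
\]
On $L$ the Schouten--Rinehart bracket restricts to the bracket of $L$, since it extends that bracket, so $[Z,Y]_{SR}=[Z,Y]$ and $[X,Z]_{SR}=[X,Z]$. This gives the identity in $\Lambda^2_AL$.

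The computation is short, and the only point needing care is the sign bookkeeping. If one prefers not to rely on the sign conventions packaged in \Cref{lem:theta-formula}, one can redo the calculation from the definition of $\Theta$ using \Cref{eq:gerst-leibniz-first} and \Cref{eq:graded-poisson-leibniz} with all three degrees equal to $1$. In that case each Koszul sign $(|\cdot|-1)$ vanishes, and the two copies of $Z\wedge[X,Y]$ cancel against $2Z\wedge[X,Y]$. Either route gives the statement.
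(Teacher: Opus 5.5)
Your proof is correct and is essentially the paper's. The paper expands $[Z\wedge X,Y]_{SR}$ and $[X,Z\wedge Y]_{SR}$ directly and cancels the terms $Z\wedge[X,Y]$ against $2Z\wedge[X,Y]$, which is exactly the degree-one case of \Cref{lem:theta-formula} that you invoke; the alternative route you sketch in your last paragraph is the paper's computation verbatim.
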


\begin{proof}
	Apply the transposed Gerstenhaber identity with $x=X$, $y=Y$ and $z=Z$.
	Since $|X|=|Y|=|Z|=1$, the sign is equal to $1$, and we obtain
	\[
		2Z\wedge [X,Y]
		=
		[Z\wedge X,Y]_{SR}+[X,Z\wedge Y]_{SR}.
	\]
	Using the Schouten--Rinehart bracket,
	\[
		[Z\wedge X,Y]_{SR}
		=
		[Z,Y]\wedge X-[X,Y]\wedge Z,
	\]
	and
	\[
		[X,Z\wedge Y]_{SR}
		=
		[X,Z]\wedge Y+Z\wedge [X,Y].
	\]
	Since $Z\wedge [X,Y]=-[X,Y]\wedge Z$, the right-hand side becomes
	\[
		[Z,Y]\wedge X+[X,Z]\wedge Y-2[X,Y]\wedge Z.
	\]
	The left-hand side is
	\[
		2Z\wedge [X,Y]=-2[X,Y]\wedge Z.
	\]
	After cancellation we obtain
	\[
		[Z,Y]\wedge X+[X,Z]\wedge Y=0,
	\]
	as claimed.
\end{proof}

\begin{definition}
	The \emph{exterior annihilator} of $L$ is the $A$-submodule
	\[
		\operatorname{Ann}_{\wedge}(L)
		=
		\{\omega\in L\mid \omega\wedge U=0
		\text{ for every } U\in L\}.
	\]
	We shall say that $L$ is \emph{exteriorly non-degenerate} if
	\[
		\operatorname{Ann}_{\wedge}(L)=0.
	\]
\end{definition}

For free modules of rank at least two this condition is the expected one: no
non-zero vector wedges trivially with every other vector. For general modules,
however, $\operatorname{Ann}_{\wedge}(L)$ may be non-zero; this is precisely
where exterior degeneracies can hide non-abelian brackets.

\begin{proposition}\label{prop:bracket-exterior-annihilator}
	Assume that $\Lambda_A L$ is transposed Gerstenhaber. Then, for all
	$X,Y\in L$,
	\[
		[X,Y]\wedge X=0
		\qquad\text{and}\qquad
		[X,Y]\wedge Y=0 .
	\]
	In particular, if $[X,Y]\in\operatorname{Ann}_{\wedge}(L)$ and $L$ is
	exteriorly non-degenerate, then $[X,Y]=0$.
\end{proposition}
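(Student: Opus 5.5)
The plan is to specialise the degree-one identity of \Cref{prop:exterior-obstruction}, which holds for all $X,Y,Z\in L$:
\[
	[Z,Y]\wedge X+[X,Z]\wedge Y=0 .
\]
This argument stays in degree one and never evaluates at the unit, in keeping with the purpose of this part of the section.

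First I take $Z=X$. Then $[X,X]=0$ by skew-symmetry of the bracket of $L$, so the second term drops out. The identity reduces to $[X,Y]\wedge X=0$.

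Next I take $Z=Y$. Now the first term contains $[Y,Y]=0$, and what remains is $[X,Y]\wedge Y=0$. Both substitutions are legitimate because \Cref{prop:exterior-obstruction} puts no restriction relating $X,Y,Z$.

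For the final assertion, assume $[X,Y]\in\operatorname{Ann}_{\wedge}(L)$ and that $L$ is exteriorly non-degenerate. Then $\operatorname{Ann}_{\wedge}(L)=0$ immediately gives $[X,Y]=0$.

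The only real obstacle is a sign-bookkeeping point: in $2\omega=0$ or $\omega\wedge\omega$ type cancellations, characteristic zero would be needed. Here, however, no such cancellation arises, because skew-symmetry of the Lie bracket of $L$ gives $[X,X]=0$ directly. I would still check that the specialisation $Z=X$ is compatible with how \Cref{prop:exterior-obstruction} was derived, namely the sign $(-1)^{|z|(|x|-1)}=1$ for degree-one elements. This holds for any three degree-one elements, repeated or not.
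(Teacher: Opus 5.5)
Your proof is correct and is exactly the paper's argument: specialise the degree-one identity $[Z,Y]\wedge X+[X,Z]\wedge Y=0$ at $Z=X$ and at $Z=Y$, use $[X,X]=[Y,Y]=0$, and read off the last clause directly from $\operatorname{Ann}_{\wedge}(L)=0$. Your closing worry about the characteristic is unnecessary, since the argument involves no division and no $\omega\wedge\omega$ cancellation.
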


\begin{proof}
	By \Cref{prop:exterior-obstruction}, for all $X,Y,Z\in L$,
	\[
		[Z,Y]\wedge X+[X,Z]\wedge Y=0.
	\]
	Putting $Z=X$ gives $[X,Y]\wedge X+[X,X]\wedge Y=0$, and $[X,X]=0$. Putting
	$Z=Y$ gives $[Y,Y]\wedge X+[X,Y]\wedge Y=0$, and again $[Y,Y]=0$. The final
	assertion follows directly from the definition of
	$\operatorname{Ann}_{\wedge}(L)$.
\end{proof}

\begin{corollary}\label{cor:field-case}
	Let $\mathfrak g$ be a Lie algebra over a field $\mathbb K$. Then the
	Schouten Gerstenhaber algebra
	\[
		\bigl(\Lambda \mathfrak g,\wedge,[-,-]_{S}\bigr)
	\]
	is transposed Gerstenhaber if and only if $\mathfrak g$ is abelian.
\end{corollary}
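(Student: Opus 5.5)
The statement is the special case of \Cref{thm:LR-complete} in which $A=\mathbb K$ and $L=\mathfrak g$. I will take the Lie--Rinehart pair $(\mathbb K,\mathfrak g)$ whose anchor is zero, which is forced because $\operatorname{Der}_{\mathbb K}(\mathbb K)=0$. Then $\Lambda_A L=\Lambda\mathfrak g$, and the Schouten--Rinehart bracket becomes the Schouten bracket $[-,-]_S$ given by the displayed formula on decomposables. The anchor condition in \Cref{thm:LR-complete} holds automatically. The theorem therefore says that $\Lambda\mathfrak g$ is transposed Gerstenhaber if and only if $[\mathfrak g,\mathfrak g]=0$, which is exactly the claim. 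In this form the proof is one line, resting on \Cref{thm:unital-gerstenhaber-rigid}: $\Lambda\mathfrak g$ is unital, so being transposed forces the whole bracket to vanish, and in particular its restriction to $\mathfrak g$.

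Since the surrounding discussion stresses arguments that never evaluate at the unit, I would also record a second proof through \Cref{prop:bracket-exterior-annihilator}. This is presumably why the corollary sits at this point.

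For the direction ``transposed implies abelian'', take $X,Y\in\mathfrak g$. \Cref{prop:bracket-exterior-annihilator} gives $[X,Y]\wedge X=0$ and $[X,Y]\wedge Y=0$. Over a field, $u\wedge v=0$ in $\Lambda^2\mathfrak g$ holds exactly when $u$ and $v$ are linearly dependent.

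Suppose first that $X$ and $Y$ are linearly independent. Then $[X,Y]$ is a multiple both of $X$ and of $Y$, since each of these is nonzero. A vector lying in $\mathbb K X\cap\mathbb K Y=0$ is zero, so $[X,Y]=0$.

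Suppose instead that $X$ and $Y$ are dependent. Then $[X,Y]=0$ by skew-symmetry. Hence $\mathfrak g$ is abelian.

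For the converse, assume $\mathfrak g$ is abelian. Every term $[X_i,Y_j]$ in the Schouten formula vanishes, and so does the degree-zero part, because the anchor is zero. So $[-,-]_S=0$, and the zero bracket satisfies \eqref{eq:graded-transposed-leibniz} trivially.

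The only step needing any care is the linear-algebra fact that a $2$-vector $u\wedge v$ vanishes only for dependent $u,v$. This is standard over a field: extend a nonzero $v$ to a basis and read off coefficients. A second point of care is confirming that the paper's conventions allow $A=\mathbb K$ as a unital base with $\Lambda_{\mathbb K}\mathfrak g$ non-negatively graded. Both points are routine, so I do not expect a genuine obstacle.
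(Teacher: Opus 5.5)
Your proposal is correct, and your second argument is the paper's proof: \Cref{prop:bracket-exterior-annihilator} gives $[X,Y]\wedge X=[X,Y]\wedge Y=0$, and linear algebra over a field settles the independent and dependent cases exactly as you do, with the converse coming from the vanishing of the Schouten bracket when $\mathfrak g$ is abelian. Your one-line route through \Cref{thm:LR-complete}, with $A=\mathbb K$ so that the anchor vanishes automatically, is also valid; the paper mentions it right after the corollary and explains that the degree-one argument is given because it never evaluates at the unit.
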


\begin{proof}
	If $\mathfrak g$ is abelian, the Schouten bracket is zero, hence the
	transposed identity is immediate.
	
	Conversely, assume that $\Lambda\mathfrak g$ is transposed Gerstenhaber, and
	let $X,Y\in\mathfrak g$. By \Cref{prop:bracket-exterior-annihilator},
	\[
		[X,Y]\wedge X=0
		\qquad\text{and}\qquad
		[X,Y]\wedge Y=0 .
	\]
	If $X$ and $Y$ are linearly independent, these two identities force
	$[X,Y]$ to be proportional both to $X$ and to $Y$, hence $[X,Y]=0$.
	If $X$ and $Y$ are linearly dependent, then $[X,Y]=0$ by skew-symmetry.
	Thus $\mathfrak g$ is abelian.
\end{proof}

The same conclusion follows from \Cref{thm:LR-complete}. The point of the
argument above is that it uses only elements of degree one.

\begin{remark}
	Over a general commutative algebra $A$ the conclusion of \Cref{cor:field-case}
	does not follow from the exterior obstruction alone. The reason is that, for an
	arbitrary $A$-module, the vanishing of $\omega\wedge U$ for many $U$'s need not
	have the same linear-algebraic meaning as over a field. In the degree one
	argument non-abelianity can survive only through degeneracies of the exterior
	algebra, and the natural receptacle for such degeneracies is
	$\operatorname{Ann}_{\wedge}(L)$.
\end{remark}

\begin{corollary}
	Assume that $\Lambda_A L$ is transposed Gerstenhaber. If
	$[L,L]\subseteq \operatorname{Ann}_{\wedge}(L)$, then the exterior
	obstruction of \Cref{prop:exterior-obstruction} vanishes identically.
\end{corollary}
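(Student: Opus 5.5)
The corollary says that if $\Lambda_A L$ is transposed Gerstenhaber and $[L,L]\subseteq\operatorname{Ann}_{\wedge}(L)$, then the obstruction of \Cref{prop:exterior-obstruction} vanishes identically. I would prove it directly from the definition of $\operatorname{Ann}_{\wedge}(L)$, with no further computation in $\Lambda_A L$. The obstruction is the trilinear expression
\[
	(X,Y,Z)\longmapsto [Z,Y]\wedge X+[X,Z]\wedge Y\in\Lambda^2_AL .
\]
It suffices to show that each of its two summands is zero separately for all $X,Y,Z\in L$.

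Fix $X,Y,Z\in L$. By hypothesis $[Z,Y]\in[L,L]\subseteq\operatorname{Ann}_{\wedge}(L)$. By the definition of the exterior annihilator, this means $[Z,Y]\wedge U=0$ for every $U\in L$. Taking $U=X$ gives $[Z,Y]\wedge X=0$. In the same way $[X,Z]\in\operatorname{Ann}_{\wedge}(L)$, and taking $U=Y$ gives $[X,Z]\wedge Y=0$. Adding the two, the expression above vanishes for every triple.

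The relation of \Cref{prop:exterior-obstruction} therefore holds trivially and imposes no condition on the bracket of $L$. This is the sense of ``vanishes identically'': the degree-one obstruction carries no information on brackets taking values in $\operatorname{Ann}_{\wedge}(L)$. Consequently the degree-one argument of \Cref{prop:bracket-exterior-annihilator} cannot detect non-abelianity there, in contrast with \Cref{thm:LR-complete}, which sees it by evaluating at the unit.

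There is no genuine obstacle. The only point needing care is that the corollary concerns the vanishing of the expression itself, so it follows from the hypothesis $[L,L]\subseteq\operatorname{Ann}_{\wedge}(L)$ alone. The transposedness assumption is used only to place us in the setting of \Cref{prop:exterior-obstruction}; it is not needed for the vanishing.
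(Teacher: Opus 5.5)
Your proposal is correct and matches the paper's proof: each of the two summands $[Z,Y]\wedge X$ and $[X,Z]\wedge Y$ vanishes separately because the bracket lies in $\operatorname{Ann}_{\wedge}(L)$. You are also right that the transposedness hypothesis is not used for the vanishing itself; the paper's proof does not use it either.
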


\begin{proof}
	If $[L,L]\subseteq \operatorname{Ann}_{\wedge}(L)$, then each term
	$[Z,Y]\wedge X$ and $[X,Z]\wedge Y$ is zero.
\end{proof}

Thus the obstruction has two different aspects. Evaluated at the unit it is
absolute: the whole Schouten--Rinehart bracket must vanish, anchor included.
Confined to degree one it is exterior, and it forces the Lie bracket into the
exterior degeneracy of $L$. The second form is the one which has a chance of
surviving outside the hypotheses of \Cref{rmk:unital-convention}, see
\Cref{pb:nonunital}.

\subsection{The gauge Lie--Rinehart algebra}

Let $E$ be an $A$-module. We denote by $\mathfrak D(E)$ the gauge
Lie--Rinehart algebra of $E$, namely the $R$-module of $R$-linear maps
$\Delta:E\to E$ for which there exists a derivation
$\sigma_\Delta\in \operatorname{Der}_R(A)$ satisfying
\[
	\Delta(ae)=a\Delta(e)+\sigma_\Delta(a)e,
	\qquad a\in A,\ e\in E.
\]
The derivation $\sigma_\Delta$ is called the symbol of $\Delta$. The bracket
on $\mathfrak D(E)$ is the commutator
\[
	[\Delta,\nabla]=\Delta\circ\nabla-\nabla\circ\Delta,
\]
and the anchor is the symbol map
\[
	\sigma:\mathfrak D(E)\longrightarrow \operatorname{Der}_R(A),
	\qquad
	\Delta\longmapsto \sigma_\Delta .
\]
Thus $\mathfrak D(E)$ is the algebraic analogue of the Atiyah algebroid of a
vector bundle. In the smooth case, the derivations of $E$ in the above sense are
exactly the sections of a Lie algebroid over $M$, with the commutator bracket
and the symbol as anchor \cite[Thm.~1.4]{KosmannSchwarzbachMackenzie2002}.

\begin{proposition}\label{prop:gauge-anchor}
	Let $E$ be an $A$-module. If the canonical Gerstenhaber algebra
	$\Lambda_A\mathfrak D(E)$ with the Schouten--Rinehart bracket is transposed
	Gerstenhaber, then every gauge derivation of $E$ has zero symbol, so that
	\[
		\mathfrak D(E)=\operatorname{End}_A(E),
	\]
	and moreover $\operatorname{End}_A(E)$ is commutative.
\end{proposition}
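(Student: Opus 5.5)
The plan is to apply \Cref{thm:LR-complete} to the Lie--Rinehart algebra $(A,\mathfrak D(E))$, whose anchor is the symbol map $\sigma$ and whose bracket is the commutator. First I will check that $\mathfrak D(E)$ really is a Lie--Rinehart algebra over $(R,A)$ in the sense of \Cref{sec:preliminaries}, so that the theorem applies.

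\begin{itemize}
\item The $A$-module structure is $(a\Delta)(e)=a\Delta(e)$. One checks $(a\Delta)(be)=b(a\Delta)(e)+a\sigma_\Delta(b)e$, so $\sigma_{a\Delta}=a\sigma_\Delta$.
\item The commutator of two gauge derivations is again one, with symbol $[\sigma_\Delta,\sigma_\nabla]$. So $\sigma$ is an $R$-Lie algebra morphism.
\item The Leibniz rule $[\Delta,a\nabla]=a[\Delta,\nabla]+\sigma_\Delta(a)\nabla$ follows by evaluating on $e$.
\item The unital convention of \Cref{rmk:unital-convention} holds, since $A$ is unital and $E$, hence $\mathfrak D(E)$, is unitary.
\end{itemize}

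These are routine verifications. One subtlety is that the symbol should be well defined: if $E$ has torsion, $\sigma_\Delta$ might not be unique. I will read the definition as treating the pair $(\Delta,\sigma_\Delta)$ as part of the data, or else assume $E$ is faithful enough. I only need that $\sigma$ is a well-defined anchor, as implicitly asserted in the paper.

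Granting this, \Cref{thm:LR-complete} gives that transposedness of $\Lambda_A\mathfrak D(E)$ forces $\sigma=0$ and $[\mathfrak D(E),\mathfrak D(E)]=0$. Alternatively, \Cref{prop:anchor-vanishes} gives $\sigma=0$ directly.

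\begin{itemize}
\item \emph{Every gauge derivation has zero symbol.} So $\Delta(ae)=a\Delta(e)$ for all $\Delta\in\mathfrak D(E)$, that is, $\Delta$ is $A$-linear. Hence $\mathfrak D(E)\subseteq\operatorname{End}_A(E)$.
\item \emph{Reverse inclusion.} Every $A$-linear endomorphism is a gauge derivation with symbol $0$, which is a derivation.
\item \emph{Conclusion.} Therefore $\mathfrak D(E)=\operatorname{End}_A(E)$. The vanishing of the commutator bracket then says exactly that $\operatorname{End}_A(E)$ is commutative.
\end{itemize}

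The only point needing care is the identification of the anchor with $\sigma$ and the well-definedness just discussed. Everything else is a direct specialisation of \Cref{thm:LR-complete}.
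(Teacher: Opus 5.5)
Your proposal is correct and follows the paper's proof: you apply \Cref{thm:LR-complete} to the gauge Lie--Rinehart algebra, read $\sigma=0$ as $A$-linearity of every $\Delta$ (giving $\mathfrak D(E)=\operatorname{End}_A(E)$), and read the vanishing commutator as commutativity of $\operatorname{End}_A(E)$. Your explicit checks of the Lie--Rinehart axioms, and your caveat that the symbol is unique only when $E$ is faithful enough, cover points the paper takes for granted; they do not change the argument.
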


\begin{proof}
	The anchor of the gauge Lie--Rinehart algebra is the symbol map
	$\sigma:\mathfrak D(E)\to \operatorname{Der}_R(A)$, and its bracket is the
	commutator. By \Cref{thm:LR-complete}, transposedness forces both to vanish.
	The condition $\sigma_\Delta=0$ is exactly $A$-linearity of $\Delta$, so the
	surviving gauge operators are the elements of $\operatorname{End}_A(E)$, and the
	vanishing of the bracket says that they commute.
\end{proof}

\begin{example}
	Let $E=A^n$. Then
	\[
		\operatorname{End}_A(E)\simeq M_n(A).
	\]
	For $n\geq 2$ this algebra is noncommutative as soon as $A$ is nonzero. Hence
	the canonical Schouten--Rinehart Gerstenhaber algebra associated with the gauge
	Lie--Rinehart algebra of $A^n$ is never transposed Gerstenhaber.
\end{example}

\begin{remark}
	\Cref{prop:gauge-anchor} should be read as a gauge-theoretic form of the
	anchor obstruction. In geometric language, the Atiyah algebroid has an exact
	sequence whose quotient is the tangent directions of the base. Transposedness
	forces this quotient part to disappear, and then it also forces the remaining
	vertical part to be abelian.
\end{remark}

\section{The geometry of transposedness}\label{sec:geometry}

The transposed Leibniz rule is algebraic in nature. Nevertheless, since many
geometric structures are encoded by algebras of functions, it is natural to ask
what kind of geometry it defines.

We start with the most direct translation. Let $M$ be a smooth manifold. A
\emph{transposed Poisson structure} on $M$ is a Lie bracket $[-,-]$ on
$C^\infty(M)$ such that
\[
	2h[f,g]=[hf,g]+[f,hg]
\]
for all $f,g,h\in C^\infty(M)$. Equivalently,
$(C^\infty(M),\cdot,[-,-])$ is a transposed Poisson algebra.

Since $C^\infty(M)$ is unital, this condition is very restrictive. Indeed,
every such bracket is determined by the derivation
\[
	D(f)=[1,f].
\]
Thus there is a unique vector field $X\in\mathfrak X(M)$ such that
$D=X$, and
\[
	[f,g]=fX(g)-gX(f).
\]
Conversely, every vector field $X$ defines a transposed Poisson structure by
this formula.

\begin{proposition}\label{prop:transposed-manifold}
	Transposed Poisson structures on a smooth manifold $M$ are in one-to-one
	correspondence with vector fields on $M$.
\end{proposition}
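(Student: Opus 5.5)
The plan is to specialise the ungraded unital classification recalled in \Cref{sec:preliminaries} (Proposition~2.4 of \cite{Bai2023}, equivalently \Cref{thm:unital-even} with $k=0$ and $A$ concentrated in degree zero) to $A=C^\infty(M)$. That result gives a bijection between $\operatorname{Der}_{\mathbb R}(C^\infty(M))$ and the transposed Poisson brackets on $(C^\infty(M),\cdot)$. The bracket $[f,g]_D=fD(g)-gD(f)$ is attached to a derivation $D$, and the inverse sends a bracket to $D=[1,-]$. Once this is in place, the statement reduces to identifying derivations of $C^\infty(M)$ with vector fields.

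The steps, in order, are as follows.

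\emph{First step.} Given a transposed Poisson structure, set $D(f)=[1,f]$. Evaluating the transposed Leibniz rule at $x=1$ gives $[f,g]=f[1,g]-g[1,f]$ and $[1,fg]=f[1,g]+g[1,f]$. So $D$ is an $\mathbb R$-linear derivation, and the bracket equals $[-,-]_D$.

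\emph{Second step.} Invoke the classical fact that every $\mathbb R$-linear derivation of $C^\infty(M)$ is a smooth vector field.
\begin{itemize}
\item Locality: if $f$ vanishes on an open set $U$, then for $p\in U$ choose a bump function $\chi$ with $\chi(p)=0$ and $\chi f=f$. Then $D(f)(p)=\chi(p)D(f)(p)+f(p)D(\chi)(p)=0$.
\item Local form: via Hadamard's lemma $f=f(p)+\sum_i (x^i-p^i)g_i$, one gets $D(f)(p)=\sum_i D(x^i)(p)\,\partial_i f(p)$ in a chart.
\item Hence $D=X$ with $X=\sum_i D(x^i)\partial_i$, which is smooth because the $D(x^i)$ are smooth.
\end{itemize}

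\emph{Third step (converse).} For a vector field $X$, the bracket $[f,g]=fX(g)-gX(f)$ satisfies skew-symmetry, Jacobi and the transposed rule by the computation displayed after \eqref{eq:derivation-bracket} (or by \Cref{thm:unital-even}).

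\emph{Fourth step (bijectivity).} The two maps are mutually inverse: $X(1)=0$ gives $[1,g]=X(g)$, and the first step gives the reverse composition.

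The only genuinely non-formal input is the second step, identifying $\operatorname{Der}(C^\infty(M))$ with $\mathfrak X(M)$. The locality argument and the Hadamard expansion need bump functions and a chart, and one must check that the locally defined $X$ patches to a global smooth vector field. I would cite this standard result rather than reprove it, noting only that locality makes the local expressions agree on overlaps.
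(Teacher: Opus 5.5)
Your proposal is correct and follows essentially the same route as the paper: recover the bracket from the derivation $D=[1,-]$ via the unital argument, identify derivations of $C^\infty(M)$ with vector fields, and verify the converse directly. The only difference is that you spell out the standard identification $\operatorname{Der}(C^\infty(M))\cong\mathfrak X(M)$ and the mutual-inverse check, both of which the paper takes for granted.
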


\begin{proof}
	Let $[-,-]$ be a transposed Poisson bracket on $C^\infty(M)$, and set
	$D(f)=[1,f]$. Since $C^\infty(M)$ is unital, the standard unital argument
	for transposed Poisson algebras gives
	\[
		[f,g]=fD(g)-gD(f).
	\]
	Moreover, $D$ is a derivation of $C^\infty(M)$, hence a vector field on
	$M$. This proves that every transposed Poisson bracket is of the displayed
	form. The converse follows by a direct verification: for any vector field $X$ of $M$, the bracket
	\[
		[f,g]_X=fX(g)-gX(f)
	\]
	is skew-symmetric, satisfies the Jacobi identity, and obeys the transposed
	Leibniz rule.
\end{proof}

This shows that the word ``Poisson'' in this context should not be understood
in the usual geometric sense. A Poisson manifold is governed by a bivector
$\pi\in\Gamma(\Lambda^2TM)$. A transposed Poisson structure, in the sense
above, is governed by a vector field.

The same formula is familiar from Jacobi geometry. Recall that a Jacobi bracket
on the trivial line bundle is locally described by a pair
$(\Lambda,E)$, where $\Lambda$ is a bivector and $E$ is a vector field,
through
\[
	\{f,g\}_{(\Lambda,E)}
	=
	\Lambda(df,dg)+fE(g)-gE(f).
\]
Thus a transposed Poisson structure is precisely the special Jacobi bracket
corresponding to
\[
	\Lambda=0,\qquad E=X.
\]
In other words, transposed Poisson manifolds are not Poisson manifolds in
disguise. They form the purely vector-field part of Jacobi geometry.

\Cref{prop:transposed-manifold} extends from functions to the exterior algebra
of an arbitrary vector bundle, provided the degree of the bracket is zero. The
answer involves the same gauge algebroid which the degree $-1$ theory
annihilates.

\begin{theorem}\label{thm:degree-zero-bundle}
	Let $E\to M$ be a smooth vector bundle and let
	$\mathcal A=\Gamma(\Lambda^\bullet E)$ with the exterior product.
	\begin{enumerate}
		\item For every $k\geq1$, the only transposed bracket of degree $-k$ on
		      $\mathcal A$ is the zero bracket.
		\item For $k=0$, the assignment
		      \[
		      	\Delta\longmapsto[-,-]_\Delta,
		      	\qquad
		      	[P,Q]_\Delta=P\wedge \Delta(Q)-\Delta(P)\wedge Q,
		      \]
		      is a bijection between $\Gamma(\mathfrak D(E))$ and the transposed brackets of
		      degree $0$ on $\mathcal A$.
	\end{enumerate}
\end{theorem}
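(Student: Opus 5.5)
Both parts reduce to the unital classification of \Cref{sec:graded}, since $\mathcal A=\Gamma(\Lambda^\bullet E)$ is a unital, non-negatively graded commutative algebra. The remaining work is to identify the relevant space of derivations of $\mathcal A$ geometrically.

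Throughout I use one structural fact. As a $C^\infty(M)$-algebra, $\mathcal A$ is generated by $\mathcal A^0=C^\infty(M)$ and $\mathcal A^1=\Gamma(E)$. To see this, note that $\Gamma(E)$ is finitely generated projective by Serre--Swan, so I can choose finitely many global sections $e_1,\dots,e_N$ spanning every fibre. Then every section of $\Lambda^pE$ is a finite sum $\sum f_I\,e_{i_1}\wedge\cdots\wedge e_{i_p}$. Consequently any homogeneous derivation of $\mathcal A$ is determined by its values on $\mathcal A^0$ and $\mathcal A^1$.

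For part (1), I split according to the parity of $k$.
\begin{itemize}
\item If $k$ is odd, then $\mathcal A^{-k}=0$, and \Cref{cor:rigidity} gives the zero bracket directly.
\item If $k\geq2$ is even, \Cref{thm:unital-even} identifies transposed brackets of degree $-k$ with homogeneous derivations $D$ of degree $-k$. Such a $D$ maps $\mathcal A^0$ and $\mathcal A^1$ into $\mathcal A^{-k}$ and $\mathcal A^{1-k}$, both of which are zero. By generation, $D=0$, so the bracket vanishes.
\end{itemize}

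For part (2), \Cref{thm:unital-even} with $k=0$ gives a bijection between transposed brackets of degree $0$ and degree-$0$ derivations $D$ of $\mathcal A$, via $[P,Q]=P\wedge D(Q)-D(P)\wedge Q$. It therefore suffices to construct a bijection between degree-$0$ derivations of $\mathcal A$ and $\Gamma(\mathfrak D(E))$, given by restriction $D\mapsto \Delta=D|_{\Gamma(E)}$.

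\emph{Restriction lands in $\Gamma(\mathfrak D(E))$.} The restriction $\sigma=D|_{C^\infty(M)}$ is a derivation of $C^\infty(M)$, hence a vector field. The Leibniz rule applied to $f\cdot e$ gives
\[
\Delta(fe)=f\Delta(e)+\sigma(f)e .
\]
So $\Delta$ is a gauge derivation with symbol $\sigma$. Here I use \cite[Thm.~1.4]{KosmannSchwarzbachMackenzie2002} to identify such operators with sections of the Atiyah algebroid. The symbol is determined by $\Delta$ whenever $E\neq0$, so the restriction map is well defined. It is injective by generation.

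\emph{Surjectivity.} Given $\Delta$ with symbol $\sigma$, I set $D=\sigma$ on functions and
\[
D(e_1\wedge\cdots\wedge e_p)=\sum_i e_1\wedge\cdots\wedge\Delta(e_i)\wedge\cdots\wedge e_p .
\]
I must check that this descends to $\Lambda^p_{C^\infty(M)}\Gamma(E)\cong\Gamma(\Lambda^pE)$.
\begin{itemize}
\item The expression is $\mathbb R$-multilinear and alternating in the $e_i$.
\item Moving a function $f$ between slots or out front is compatible: each such move produces the same term $\sigma(f)\,e_1\wedge\cdots\wedge e_p$ on both sides, because $D(fe)=fD(e)+\sigma(f)e$.
\end{itemize}
Hence $D$ is well defined on $\mathcal A$ and is a degree-$0$ derivation, since the Leibniz rule holds on decomposables. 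The sign-free form of the Leibniz rule is justified by \Cref{lem:even-no-signs}.

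Composing the two bijections gives the stated formula $[P,Q]_\Delta=P\wedge\Delta(Q)-\Delta(P)\wedge Q$, with $\Delta$ extended to $\mathcal A$ as above.

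I expect the main obstacle to be this well-definedness of the extension over $C^\infty(M)$, that is, checking that the tensor relations of $\Lambda_{C^\infty(M)}$ are respected. The global generation argument is the other point needing care. If global generation proves awkward, I would fall back on locality of derivations: derivations are local operators, so I can argue on trivialising charts and glue with a partition of unity.
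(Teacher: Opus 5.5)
Your proof is correct and follows the paper's route. Part (1) goes through \Cref{cor:rigidity} for odd $k$, and through \Cref{thm:unital-even} plus generation of $\mathcal A$ in degrees $0$ and $1$ for even $k$. Part (2) identifies degree-zero derivations of $\mathcal A$ with gauge derivations of $E$ by restriction to $\mathcal A^1$.

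The differences are only technical:
\begin{itemize}
\item You get generation from finitely many globally spanning sections (Serre--Swan), where the paper uses locality of derivations and a partition of unity.
\item You actually verify the $C^\infty(M)$-balancedness that makes the extension of $\Delta$ to $\Gamma(\Lambda^\bullet E)$ well defined; the paper only asserts it.
\item Your caveat that $\Delta$ determines its symbol only when $E\neq0$ is what injectivity of the restriction really needs, rather than well-definedness as you phrase it, and the paper passes over this point. For $E=0$ the degree-zero brackets on $C^\infty(M)$ are still given by vector fields, so there the bijection holds only if $\mathfrak D(E)$ is read as pairs $(\Delta,\sigma_\Delta)$.
\end{itemize}
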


\begin{proof}
	The algebra $\mathcal A$ is unital and non-negatively graded, and it is
	generated over $\mathbb R$ by $\mathcal A^0=C^\infty(M)$ and
	$\mathcal A^1=\Gamma(E)$. Here we use that a homogeneous derivation $D$ of
	$\mathcal A$ is a local operator: if $s$ vanishes on an open set $U$ and
	$p\in U$, choosing $\chi$ supported in $U$ with $\chi(p)=1$ gives $\chi s=0$,
	hence $0=D(\chi s)=D(\chi)s+\chi D(s)$, which at $p$ reads $D(s)(p)=0$.
	Locality and a partition of unity reduce the determination of $D$ to
	decomposable sections, which are products of elements of $\mathcal A^0$ and
	$\mathcal A^1$.
	
	If $-k$ is odd, assertion (1) is \Cref{cor:rigidity}. If $-k$ is even,
	\Cref{thm:unital-even} identifies the transposed brackets with the homogeneous
	derivations of $\mathcal A$ of degree $-k$. For $k\geq2$ such a derivation sends
	$\mathcal A^0$ into $\mathcal A^{-k}=0$ and $\mathcal A^1$ into
	$\mathcal A^{1-k}=0$, hence it vanishes on generators and is zero.
	
	For $k=0$, a homogeneous derivation $D$ of degree zero is determined by its
	restrictions to $\mathcal A^0$ and $\mathcal A^1$. The restriction to
	$\mathcal A^0$ is a derivation of $C^\infty(M)$, that is a vector field $V$,
	and the Leibniz rule for $f\in\mathcal A^0$ and $s\in\mathcal A^1$ reads
	$D(fs)=V(f)s+fD(s)$. Thus $D|_{\mathcal A^1}$ is a gauge derivation of $E$ with
	symbol $V$, and conversely every element of $\Gamma(\mathfrak D(E))$ extends
	uniquely to a degree zero derivation of $\Gamma(\Lambda^\bullet E)$. Assertion
	(2) now follows from \Cref{lem:even-no-signs} and \Cref{thm:unital-even}, the
	bracket reading
	$P\wedge\Delta(Q)-\Delta(P)\wedge Q$ in degree zero.
\end{proof}

\begin{example}
	For $E=TM$ one obtains the algebra of multivector fields. Its transposed
	brackets of degree $0$ are exactly the gauge derivations of $TM$, while by
	\Cref{thm:LR-complete} its Schouten bracket, which has degree $-1$, admits none.
	For a Lie algebroid $A\to M$ and $X\in\Gamma(A)$, the operator
	$\Delta=[X,-]_{SR}$ is a gauge derivation of $A$ with symbol $\rho(X)$, and the
	corresponding degree zero bracket on $\Gamma(\Lambda^\bullet A)$ is
	$[P,Q]_X=P\wedge[X,Q]_{SR}-[X,P]_{SR}\wedge Q$.
\end{example}

Both statements point in the same direction. Transposedness in the graded
setting is governed by first-order operators, not by bivectors, and it is
compatible with the grading only when the bracket is even.

The distinction above is important for what follows. We shall not impose the
ordinary Poisson Leibniz rule and the transposed Leibniz rule on the same
bracket. Doing so would force the bracket to vanish. Rather, the point is to
ask whether transposedness can be transported along the geometric constructions
naturally attached to Poisson and Lie--Rinehart structures.

For instance, a Poisson algebra $(A,\{\,,\,\})$ determines a cotangent
Lie--Rinehart algebra on $\Omega^1(A)$, with anchor
\[
	\rho(df)=\{f,-\}
\]
and bracket characterised by
\[
	[[df,dg]]=d\{f,g\},
\]
see \cite{Huebschmann1990}.
The bracket obtained in this way is Koszul's
\cite{Koszul1985}, and the resulting passage from a Poisson algebra to the
Gerstenhaber algebra $\Omega^\bullet(A)$ is the one studied in
\cite{Kosmann-Schwarzbach1996}. The construction is the algebraic analogue of
the cotangent Lie algebroid of a Poisson manifold \cite{Vaisman1994}. The question we shall consider is not whether
$(A,\cdot,\{\,,\,\})$ is both Poisson and transposed, but whether the
Gerstenhaber algebra canonically associated with this cotangent
Lie--Rinehart algebra is transposed.

The answer is rigid. If the Schouten--Rinehart Gerstenhaber algebra
\[
	\Lambda_A\Omega^1(A)
\]
associated with the cotangent Lie--Rinehart algebra is transposed
Gerstenhaber, then by \Cref{thm:LR-complete} its anchor must vanish. In
geometric terms, for a Poisson manifold $(M,\pi)$, this means that the cotangent
anchor
\[
	\pi^\sharp:T^*M\longrightarrow TM
\]
must be zero. Hence $\pi=0$.

Thus transposedness is transportable to geometry only in a limited sense. On
the algebra of functions it produces vector-field-type Jacobi brackets, and on
exterior algebras of vector bundles it produces gauge derivations. Along the
canonical Poisson--cotangent--Gerstenhaber construction, however, it becomes
a rigidity condition: it annihilates the geometric anchor.

\section{Transposed \texorpdfstring{$F$}{F}-algebroids}\label{sec:Falgebroids}

The preceding sections show that the Lie--Rinehart and Gerstenhaber route is
too rigid for transposedness: over a unital base the Schouten--Rinehart bracket
must vanish identically. The obstruction, however, is attached to the odd degree
of the Gerstenhaber bracket, not to the presence of an anchor. It is therefore
natural to look for a geometric framework in which the bracket stays in degree
zero, and the anchor is retained.

Transposed Poisson algebras are $F$-manifold algebras, and $F$-algebroids are
the algebroid-level version of $F$-manifold algebras. This section takes that
route. We define transposed $F$-algebroids, we classify those with an identity,
and we describe the others locally wherever the anchor does not vanish. We recall the definitions, following \cite{CruzMorales2023}.

\begin{definition}[\cite{CruzMorales2023,Hertling1999}]
	An \emph{$F$-manifold algebra} is a triple $(\mathfrak g,[-,-],\cdot)$ where
	$(\mathfrak g,\cdot)$ is a commutative associative algebra, $(\mathfrak
	g,[-,-])$ is a Lie algebra, and the Hertling--Manin relation
	\begin{equation}\label{eq:HM}
		P_{x\cdot y}(z,w)=x\cdot P_y(z,w)+y\cdot P_x(z,w)
	\end{equation}
	holds for all $x,y,z,w\in\mathfrak g$, where
	\begin{equation}\label{eq:Ptensor}
		P_x(y,z)=[x,y\cdot z]-[x,y]\cdot z-y\cdot[x,z].
	\end{equation}
\end{definition}

The tensor $P_x(y,z)$ measures the failure of $\operatorname{ad}_x$ to be a
derivation of the commutative product, so an $F$-manifold algebra is a Poisson
algebra up to the correction \eqref{eq:HM}. An \emph{$F$-manifold} is a
manifold $M$ with a $C^\infty(M)$-bilinear commutative associative product
$\bullet$ on $TM$ such that $(\mathfrak X(M),[-,-],\bullet)$ is an $F$-manifold
algebra \cite[Def.~2.4]{CruzMorales2023}.

\begin{definition}[{\cite[Def.~2.6]{CruzMorales2023}}]
	A \emph{commutative associative algebroid} is a vector bundle $A\to M$ equipped
	with a $C^\infty(M)$-bilinear, commutative and associative multiplication
	$\cdot_A$ on $\Gamma(A)$.
\end{definition}

\begin{definition}[{\cite[Def.~2.7]{CruzMorales2023}}]\label{def:Falgebroid}
	An \emph{$F$-algebroid} is a vector bundle $A\to M$ equipped with a
	multiplication $\cdot_A$, a skew-symmetric bracket $[-,-]_A$ on $\Gamma(A)$ and
	a bundle map $a_A:A\to TM$, called the anchor, such that
	$(A,[-,-]_A,a_A)$ is a Lie algebroid, $(A,\cdot_A)$ is a commutative associative
	algebroid, and $(\Gamma(A),[-,-]_A,\cdot_A)$ is an $F$-manifold algebra. A
	section $e\in\Gamma(A)$ is an \emph{identity} if $e\cdot_A X=X$ for all
	$X\in\Gamma(A)$ \cite[Def.~2.11]{CruzMorales2023}.
\end{definition}

We shall use throughout the Lie algebroid axiom in the form
\begin{equation}\label{eq:algebroid-leibniz}
	[X,fY]_A=f[X,Y]_A+a_A(X)(f)\,Y,
	\qquad X,Y\in\Gamma(A),\ f\in C^\infty(M),
\end{equation}
together with the $C^\infty(M)$-bilinearity of $\cdot_A$. These two facts, and
nothing else, drive the whole section.

\begin{remark}\label{rmk:two-definitions}
	An earlier and slightly different notion of $F$-algebroid appears in
	\cite{CruzMorales2019}. There the base manifold is assumed to carry an
	$F$-manifold structure $(M,\bullet)$ and the anchor is required to be
	multiplicative,
	\[
		a_A(X\cdot_A Y)=a_A(X)\bullet a_A(Y),
	\]
	see \cite[Rem.~2.8]{CruzMorales2023}. We work with
	\Cref{def:Falgebroid}. All the structures produced below also satisfy the
	multiplicativity condition, as we point out where relevant.
\end{remark}

\begin{example}[{\cite[Ex.~2.9]{CruzMorales2023}}]\label{ex:tangent-F}
	Every $F$-manifold algebra is an $F$-algebroid over a point. If $(M,\bullet)$
	is an $F$-manifold, then $(TM,[-,-]_{\mathfrak X(M)},\bullet,\mathrm{Id})$ is an
	$F$-algebroid, the \emph{tangent $F$-algebroid} of $M$.
\end{example}

Here we introduce the transposed condition.

\begin{definition}\label{def:transposed-F}
	An $F$-algebroid $(A,[-,-]_A,\cdot_A,a_A)$ is \emph{transposed} if
	\begin{equation}\label{eq:transposed-F}
		2\,Z\cdot_A[X,Y]_A
		=
		[Z\cdot_A X,Y]_A+[X,Z\cdot_A Y]_A
		\qquad\text{for all } X,Y,Z\in\Gamma(A).
	\end{equation}
\end{definition}

Equivalently, $(\Gamma(A),\cdot_A,[-,-]_A)$ is a transposed Poisson algebra over
$\mathbb R$. The condition can also be written in terms of the tensor
\eqref{eq:Ptensor} alone, which is the form in which it compares directly with
the Hertling--Manin relation.

\begin{proposition}\label{prop:HM-characterization}
	Let $A$ be a commutative associative algebra endowed with a Lie bracket
	$[-,-]$, and let $P$ be as in \eqref{eq:Ptensor}. Then $(A,\cdot,[-,-])$ is a
	transposed Poisson algebra if and only if
	\[
		P_x(z,y)-P_y(z,x)=x[y,z]-y[x,z]
		\qquad\text{for all }x,y,z\in A.
	\]
\end{proposition}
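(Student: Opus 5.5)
The plan is a direct expansion in which no identity beyond skew-symmetry of the bracket is used. Fix $x,y,z\in A$ and write the defect of the transposed Leibniz rule as
\[
	\Theta(x,y,z)=[zx,y]+[x,zy]-2z[x,y],
\]
so that $(A,\cdot,[-,-])$ is transposed Poisson exactly when $\Theta\equiv0$. The aim is to show that $P_x(z,y)-P_y(z,x)$ differs from $\Theta(x,y,z)$ only by terms that are already linear in the bracket of two arguments multiplied by the third. This is the same kind of computation as \Cref{lem:theta-formula}, now in the ungraded case.

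The first step is to expand both tensors from \eqref{eq:Ptensor}:
\[
	P_x(z,y)=[x,zy]-[x,z]y-z[x,y],\qquad
	P_y(z,x)=[y,zx]-[y,z]x-z[y,x].
\]
Next I would use skew-symmetry in two places: rewrite $[y,zx]=-[zx,y]$ and $z[y,x]=-z[x,y]$. The difference then becomes
\[
	P_x(z,y)-P_y(z,x)=[x,zy]+[zx,y]-2z[x,y]-y[x,z]+x[y,z],
\]
using commutativity of $\cdot$ to write $[x,z]y=y[x,z]$ and $[y,z]x=x[y,z]$. In other words,
\[
	P_x(z,y)-P_y(z,x)=\Theta(x,y,z)+x[y,z]-y[x,z].
\]

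From this identity the equivalence is immediate. The condition in the statement holds for all $x,y,z$ if and only if $\Theta(x,y,z)=0$ for all $x,y,z$, which is \eqref{eq:transposed-leibniz}. Neither associativity nor the Jacobi identity is needed, so the statement actually holds for any commutative product paired with any skew-symmetric bracket.

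There is no real obstacle here. The only point needing care is matching the argument order in $P_y(z,x)$, so that the two bracket terms $[x,zy]$ and $[zx,y]$ appear with the correct signs and the two copies of $z[x,y]$ combine into $-2z[x,y]$ rather than cancelling.
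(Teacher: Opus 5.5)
Your proof is correct and is essentially the paper's own argument: you expand both tensors and use only skew-symmetry and commutativity to obtain $P_x(z,y)-P_y(z,x)=[zx,y]+[x,zy]-2z[x,y]+x[y,z]-y[x,z]$, and the equivalence with the transposed Leibniz rule follows at once. Your remark that neither associativity nor the Jacobi identity is used is also accurate.
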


\begin{proof}
	For all $x,y,z\in A$,
	\[
		\begin{aligned}
			P_x(z,y)-P_y(z,x)
			  & =[x,zy]-[x,z]y-z[x,y]-[y,zx]+[y,z]x+z[y,x] \\
			  & =[x,zy]+[zx,y]-y[x,z]+x[y,z]-2z[x,y],      
		\end{aligned}
	\]
	so that
	\[
		P_x(z,y)-P_y(z,x)-x[y,z]+y[x,z]=[x,zy]+[zx,y]-2z[x,y].
	\]
	The displayed identity is therefore equivalent to
	$[zx,y]+[x,zy]=2z[x,y]$, which is the transposed Leibniz rule.
\end{proof}

The Hertling--Manin relation does not have to be imposed separately.

\begin{proposition}\label{prop:HM-free}
	Let $A\to M$ be a vector bundle carrying a commutative associative algebroid
	structure $\cdot_A$ and a Lie algebroid structure $([-,-]_A,a_A)$ satisfying
	\eqref{eq:transposed-F}. Then $(A,[-,-]_A,\cdot_A,a_A)$ is a transposed
	$F$-algebroid.
\end{proposition}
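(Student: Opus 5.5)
The plan is to reduce the statement to the purely algebraic fact, recalled in the introduction, that every transposed Poisson algebra is an $F$-manifold algebra \cite{Sartayev2024,DamasBeites2024}. First I check which axioms of \Cref{def:Falgebroid} are already given. The hypotheses supply a Lie algebroid $(A,[-,-]_A,a_A)$ and a commutative associative algebroid $(A,\cdot_A)$. The only axiom left to verify is that $(\Gamma(A),[-,-]_A,\cdot_A)$ satisfies the Hertling--Manin relation \eqref{eq:HM}. This is an identity of $\mathbb R$-multilinear expressions in sections, so the anchor and the $C^\infty(M)$-module structure play no role here.

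Next I observe that $(\Gamma(A),\cdot_A,[-,-]_A)$ is a transposed Poisson algebra over $\mathbb R$:
\begin{itemize}
\item $\cdot_A$ is commutative and associative;
\item $[-,-]_A$ is a Lie bracket, being the bracket of a Lie algebroid;
\item \eqref{eq:transposed-F} is exactly \eqref{eq:transposed-leibniz}.
\end{itemize}
The cited theorem then gives \eqref{eq:HM} for all $x,y,z,w\in\Gamma(A)$. This makes $A$ an $F$-algebroid, and \eqref{eq:transposed-F} makes it transposed.

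To keep the argument less dependent on the literature, I would also sketch how \eqref{eq:HM} follows directly from the transposed identities of \cite[Thm.~2.7]{Bai2023}. The first tool is the first-slot form of the rule, \eqref{eq:first-slot} with $k=0$:
\[
[xy,z]=2x[y,z]-[y,xz].
\]
The second tool is the mixed-product identity \eqref{eq:mixed-product}.

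The computation would go as follows.
\begin{enumerate}
\item Use \Cref{prop:HM-characterization} to trade antisymmetrised combinations of $P$ for the products $x[y,z]-y[x,z]$.
\item Expand $P_{xy}(z,w)=[xy,zw]-[xy,z]w-z[xy,w]$ with the first-slot rule, so that every bracket has a product of at most one pair in a slot.
\item Expand $xP_y(z,w)+yP_x(z,w)$ in the same way.
\item Compare the two sides; the residual terms should be removed by \eqref{eq:mixed-product}, \eqref{eq:cyclic-basic} and \eqref{eq:farkas-id}.
\end{enumerate}

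The main obstacle is this last bookkeeping step. Several residual terms of the form $[xz,yw]$ have to be shown to cancel through \eqref{eq:mixed-product}, and this is exactly where the Jacobi identity enters via \cite[Thm.~2.7]{Bai2023}. If the direct computation becomes unwieldy, I will rely on the cited inclusion of transposed Poisson algebras into $F$-manifold algebras. That inclusion is established there for arbitrary, not necessarily unital, algebras, which matters because $\Gamma(A)$ need not have an identity section.
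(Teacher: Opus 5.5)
Your proposal is correct and follows the paper's proof. Both reduce the statement to the Hertling--Manin relation for $(\Gamma(A),[-,-]_A,\cdot_A)$, note that this triple is a transposed Poisson algebra over $\mathbb R$, and then invoke the known inclusion of transposed Poisson algebras into $F$-manifold algebras; the direct computation you sketch is optional and is not carried out in the paper either.
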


\begin{proof}
	Only the Hertling--Manin relation for $(\Gamma(A),[-,-]_A,\cdot_A)$ has to be
	checked, and it holds because every transposed Poisson algebra is an
	$F$-manifold algebra, see \cite[\S2.4]{DamasBeites2024} and \cite{Bai2023}.
\end{proof}

Thus a transposed $F$-algebroid is exactly a Lie algebroid together with a
compatible commutative associative multiplication on its sections, the
compatibility being \eqref{eq:transposed-F}. This is the same economy as in the
algebraic case, where the transposed Leibniz rule already implies the
Hertling--Manin relation.

Substructures behave as expected.

\begin{definition}
	Let $(A,[-,-]_A,\cdot_A,a_A)$ be an $F$-algebroid. A vector subbundle
	$B\subseteq A$ is an \emph{$F$-subalgebroid} if $\Gamma(B)$ is closed under
	$\cdot_A$ and $[-,-]_A$, and $a_A(B)\subseteq TM$, so that the restricted data
	form an $F$-algebroid.
\end{definition}

\begin{proposition}
	An $F$-subalgebroid of a transposed $F$-algebroid is transposed.
\end{proposition}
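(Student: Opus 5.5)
The plan is a restriction argument: an $F$-subalgebroid inherits every operation from the ambient algebroid, and \eqref{eq:transposed-F} is a pointwise identity among these operations, so it should descend with no additional work. Fix a transposed $F$-algebroid $(A,[-,-]_A,\cdot_A,a_A)$ and an $F$-subalgebroid $B\subseteq A$. By definition, $B$ carries the restricted data $\cdot_B=\cdot_A|_{\Gamma(B)}$, $[-,-]_B=[-,-]_A|_{\Gamma(B)}$ and $a_B=a_A|_B$. The closure hypotheses make the first two operations valued in $\Gamma(B)$, and the definition asserts that these data form an $F$-algebroid. It therefore only remains to verify \eqref{eq:transposed-F} for sections of $B$.

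Take $X,Y,Z\in\Gamma(B)$. Closure under $\cdot_A$ gives $Z\cdot_A X,\ Z\cdot_A Y\in\Gamma(B)$. Closure under $[-,-]_A$ then places $[X,Y]_A$, $[Z\cdot_A X,Y]_A$ and $[X,Z\cdot_A Y]_A$ in $\Gamma(B)$ as well. Every term of
\[
	2\,Z\cdot_B[X,Y]_B=[Z\cdot_B X,Y]_B+[X,Z\cdot_B Y]_B
\]
is thus the same element of $\Gamma(B)\subseteq\Gamma(A)$ as the corresponding term of \eqref{eq:transposed-F} for $A$. Since $\Gamma(B)\to\Gamma(A)$ is injective, the identity holds in $\Gamma(B)$ because it holds in $\Gamma(A)$. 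Equivalently, $(\Gamma(B),\cdot_B,[-,-]_B)$ is a subalgebra, for both operations, of the transposed Poisson algebra $(\Gamma(A),\cdot_A,[-,-]_A)$, and transposed Poisson algebras are closed under subalgebras because they are defined by multilinear identities.

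I do not expect a real obstacle here. The one point that needs care is whether $B$ is an $F$-algebroid in its own right rather than merely a subbundle closed under the operations. This is built into the definition of $F$-subalgebroid. If one preferred not to rely on that, \Cref{prop:HM-free} would do the job: $B$ is a Lie algebroid, since the restricted bracket obeys \eqref{eq:algebroid-leibniz} with anchor $a_A|_B$; it is a commutative associative algebroid by $C^\infty(M)$-bilinearity; and it satisfies \eqref{eq:transposed-F} by the argument above. The Hertling--Manin relation then follows automatically, so $B$ is a transposed $F$-algebroid.
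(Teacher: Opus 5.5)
Your proposal is correct and follows the paper's own argument: the product and bracket on $B$ are restrictions of those on $A$, $\Gamma(B)$ is closed under both, so every term of the transposed Leibniz rule for $X,Y,Z\in\Gamma(B)$ is the corresponding term computed in $\Gamma(A)$, where the identity already holds. Your closing fallback via the Hertling--Manin proposition is sound but not needed, since the definition of $F$-subalgebroid already asserts that the restricted data form an $F$-algebroid.
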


\begin{proof}
	The product and the bracket of $B$ are the restrictions of those of $A$, and
	$\Gamma(B)$ is closed under both. Hence \eqref{eq:transposed-F} holds in
	$\Gamma(B)$ because it holds in $\Gamma(A)$.
\end{proof}

Before turning to the general structure, we record the examples in which the
anchor plays no role. They are all instances of one construction.

\begin{proposition}\label{prop:zero-anchor-classification}
	Let $(A,\cdot_A)$ be a commutative associative algebroid with identity $e$ and
	let $D\in\Gamma(\operatorname{End}(A))$ be a derivation of $\cdot_A$, that is
	$D(X\cdot_A Y)=D(X)\cdot_A Y+X\cdot_A D(Y)$. Then
	\[
		[X,Y]_D=X\cdot_A D(Y)-Y\cdot_A D(X),
		\qquad a_A=0,
	\]
	is a transposed $F$-algebroid structure on $A$, and every transposed
	$F$-algebroid with identity and vanishing anchor arises in this way, from
	$D=[e,-]_A$.
\end{proposition}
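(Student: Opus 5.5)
Both directions reduce to the algebraic unital case recalled in \Cref{sec:preliminaries}. The only genuinely new point is bookkeeping with the $C^\infty(M)$-linearity required by the algebroid axioms.

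For the forward direction, fix $D\in\Gamma(\operatorname{End}(A))$ that is a derivation of $\cdot_A$. Since $D$ is $C^\infty(M)$-linear and $\cdot_A$ is $C^\infty(M)$-bilinear, the bracket $[X,Y]_D$ is $C^\infty(M)$-bilinear and skew-symmetric. With $a_A=0$, the Leibniz rule \eqref{eq:algebroid-leibniz} then reads $[X,fY]_D=f[X,Y]_D$, which holds.

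Next, $(\Gamma(A),\cdot_A)$ is a unital commutative $\mathbb R$-algebra with unit $e$, and $D$ is an $\mathbb R$-linear derivation of it. So the Wronskian computation following \eqref{eq:derivation-bracket} gives both the Jacobi identity and the transposed Leibniz rule \eqref{eq:transposed-F}. The Jacobi identity can also be obtained from \Cref{thm:unital-even} with $k=0$ and trivial grading.

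The anchor axiom $a_A([X,Y])=[a_A X,a_A Y]$ is trivial since $a_A=0$. Thus $([-,-]_D,0)$ is a Lie algebroid, and \Cref{prop:HM-free} supplies the Hertling--Manin relation, so the structure is a transposed $F$-algebroid.

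For the converse, let $(A,[-,-]_A,\cdot_A,0)$ be a transposed $F$-algebroid with identity $e$, and set $D=[e,-]_A$.
\begin{itemize}
\item Vanishing anchor and \eqref{eq:algebroid-leibniz} give $D(fY)=fD(Y)$, so $D$ is $C^\infty(M)$-linear and hence a section of $\operatorname{End}(A)$.
\item $(\Gamma(A),\cdot_A,[-,-]_A)$ is a unital transposed Poisson algebra over $\mathbb R$, so the unital argument of \cite[Prop.~2.4]{Bai2023} recalled in \Cref{sec:preliminaries} applies. Setting $x=e$ in \eqref{eq:transposed-F} yields $[X,Y]_A=X\cdot_A D(Y)-Y\cdot_A D(X)$ and $D(X\cdot_A Y)=X\cdot_A D(Y)+Y\cdot_A D(X)$.
\end{itemize}
So $D$ is a derivation of $\cdot_A$, and the bracket equals $[-,-]_D$.

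I expect no real obstacle. The one point to state explicitly is that $C^\infty(M)$-linearity of $D$ comes from the vanishing anchor: without it, $[e,-]_A$ is only a gauge derivation and is not a section of $\operatorname{End}(A)$. It is also worth remarking, though not needed, that $D(e)=0$, so the correspondence $D\leftrightarrow[-,-]_D$ is bijective.
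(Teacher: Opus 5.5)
Your proposal is correct and follows the paper's proof essentially step for step. In the forward direction, $C^\infty(M)$-bilinearity of $[-,-]_D$ gives the Leibniz rule with zero anchor, the Wronskian computation gives Jacobi and \eqref{eq:transposed-F} (you do it on $\Gamma(A)$, the paper fibrewise), and \Cref{prop:HM-free} supplies Hertling--Manin. Conversely, the vanishing anchor makes $D=[e,-]_A$ tensorial, and the unital argument of \cite{Bai2023} yields both the derivation property and the Wronskian form; your citation of \Cref{thm:unital-even} with $k=0$ for the Jacobi identity usefully backs up what the preliminaries only assert.
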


\begin{proof}
	Since $D$ is $C^\infty(M)$-linear, the bracket $[-,-]_D$ is
	$C^\infty(M)$-bilinear, so \eqref{eq:algebroid-leibniz} holds with $a_A=0$. The
	Jacobi identity and \eqref{eq:transposed-F} hold pointwise on the fibres, by the
	ungraded computation recalled around \eqref{eq:derivation-bracket}, and the
	Hertling--Manin relation follows from \Cref{prop:HM-free}.
	
	Conversely, let $(A,[-,-]_A,\cdot_A,0)$ be transposed with identity $e$, and put
	$D=[e,-]_A$. By \eqref{eq:algebroid-leibniz} with $a_A=0$ the operator $D$ is
	$C^\infty(M)$-linear, hence tensorial, so it is induced by a smooth bundle
	endomorphism of $A$. Since $\Gamma(A)$ is a unital commutative algebra over
	$\mathbb R$ with unit $e$, the unital case of the ungraded theory applies to it
	and gives that $D$ is a derivation of $\cdot_A$ and that
	\[
		[X,Y]_A=X\cdot_A D(Y)-Y\cdot_A D(X).
	\]
	Both statements are identities between sections, and no fibrewise argument is
	needed.
\end{proof}

\begin{example}
	Let $B$ be a finite-dimensional transposed Poisson algebra and let $A=M\times B$
	be the trivial bundle. With the fibrewise operations and $a_A=0$, $A$ is a
	transposed $F$-algebroid. Taking $B$ unital, this is
	\Cref{prop:zero-anchor-classification} with $D$ the derivation of $B$ given by
	the unital classification.
\end{example}

\begin{example}
	Any commutative associative algebroid with zero bracket and zero anchor is a
	transposed $F$-algebroid, namely the case $D=0$ above.
\end{example}

These examples are inert: the anchor vanishes, so nothing of the base manifold
enters. The rest of the section shows that this is almost the only possibility.

The following identity is the engine of the section. It uses only
\eqref{eq:algebroid-leibniz}, the $C^\infty(M)$-bilinearity of $\cdot_A$ and
\eqref{eq:transposed-F}. In particular it requires no identity section.

\begin{lemma}\label{lem:anchor-obstruction}
	Let $(A,[-,-]_A,\cdot_A,a_A)$ be a transposed $F$-algebroid. Then, for all
	$X,Y,Z\in\Gamma(A)$ and all $f\in C^\infty(M)$,
	\begin{equation}\label{eq:anchor-obstruction}
		a_A(X)(f)\,\bigl(Z\cdot_A Y\bigr)
		=
		a_A(Y)(f)\,\bigl(Z\cdot_A X\bigr).
	\end{equation}
\end{lemma}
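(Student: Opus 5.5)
The plan is to insert $fZ$ in place of $Z$ in the transposed Leibniz rule \eqref{eq:transposed-F}, expand both sides with \eqref{eq:algebroid-leibniz}, and compare the result with $f$ times the original identity. No identity section is needed, only the $C^\infty(M)$-bilinearity of $\cdot_A$ and the Leibniz rule of the Lie algebroid.

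First I expand the left-hand side. By bilinearity of the product,
\[
	2(fZ)\cdot_A[X,Y]_A=2f\,Z\cdot_A[X,Y]_A .
\]
On the right, bilinearity gives $(fZ)\cdot_A X=f(Z\cdot_A X)$ and $X\cdot_A(fZ)\cdot_A Y$ reduces to $X\cdot_A f(Z\cdot_A Y)$, i.e. the second argument is $f(Z\cdot_A Y)$.

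Next I move $f$ out of the brackets. For the second term, \eqref{eq:algebroid-leibniz} gives directly
\[
	[X,f(Z\cdot_A Y)]_A=f[X,Z\cdot_A Y]_A+a_A(X)(f)\,(Z\cdot_A Y).
\]
For the first term, $f$ sits in the first slot, so I apply skew-symmetry, then \eqref{eq:algebroid-leibniz}, then skew-symmetry again:
\[
	[f(Z\cdot_A X),Y]_A=f[Z\cdot_A X,Y]_A-a_A(Y)(f)\,(Z\cdot_A X).
\]

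Finally I combine. The transposed rule for $(X,Y,fZ)$ now reads
\[
	2f\,Z\cdot_A[X,Y]_A=f\bigl([Z\cdot_A X,Y]_A+[X,Z\cdot_A Y]_A\bigr)-a_A(Y)(f)(Z\cdot_A X)+a_A(X)(f)(Z\cdot_A Y).
\]
Subtracting $f$ times \eqref{eq:transposed-F} for $(X,Y,Z)$ cancels all the $f$-multiples and leaves exactly \eqref{eq:anchor-obstruction}.

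There is no real obstacle; the only points needing care are the sign when $f$ is extracted from the first slot of the bracket, and remembering to use bilinearity of $\cdot_A$ on both sides of the product.
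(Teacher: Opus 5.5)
Your proposal is correct and follows the paper's own proof step for step: substitute $fZ$ for $Z$ in \eqref{eq:transposed-F}, pull $f$ out using bilinearity of $\cdot_A$ and \eqref{eq:algebroid-leibniz} (with skew-symmetry in the first slot, which produces the term $-a_A(Y)(f)\,(Z\cdot_A X)$), and subtract $f$ times the original identity. The only blemish is the phrase ``$X\cdot_A(fZ)\cdot_A Y$'', which should read $(fZ)\cdot_A Y=f(Z\cdot_A Y)$; the second argument you then use is the right one.
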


\begin{proof}
	Write \eqref{eq:transposed-F} with $fZ$ in place of $Z$. Since $\cdot_A$ is
	$C^\infty(M)$-bilinear, the left-hand side is $2f\,Z\cdot_A[X,Y]_A$. On the
	right-hand side, $(fZ)\cdot_A X=f(Z\cdot_A X)$, and
	\eqref{eq:algebroid-leibniz} together with skew-symmetry gives
	\[
		[f(Z\cdot_A X),Y]_A=f[Z\cdot_A X,Y]_A-a_A(Y)(f)(Z\cdot_A X),
		\qquad
		[X,f(Z\cdot_A Y)]_A=f[X,Z\cdot_A Y]_A+a_A(X)(f)(Z\cdot_A Y).
	\]
	Subtracting $f$ times \eqref{eq:transposed-F} leaves
	$a_A(X)(f)(Z\cdot_A Y)-a_A(Y)(f)(Z\cdot_A X)=0$.
\end{proof}

\Cref{eq:anchor-obstruction} says that, as soon as the anchor is non-zero, the
multiplication is forced to be extremely degenerate.

\begin{proposition}\label{prop:rank-one-product}
	Let $(A,[-,-]_A,\cdot_A,a_A)$ be a transposed $F$-algebroid and let $p\in M$ be
	a point at which $a_A$ does not vanish. Then there are a neighbourhood $U$ of
	$p$, a nowhere vanishing $\varphi\in\Gamma(A^*|_U)$, a section
	$u\in\Gamma(A|_U)$ and a vector field $W$ on $U$ such that
	\[
		Z\cdot_A Y=\varphi(Z)\varphi(Y)\,u
		\qquad\text{on }U .
	\]
	If moreover $u_p\neq0$, then in addition $a_A=\varphi\otimes W$ on a possibly
	smaller neighbourhood, with $W_p\neq0$.
\end{proposition}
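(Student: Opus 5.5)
The plan is to extract everything from the identity \eqref{eq:anchor-obstruction} of \Cref{lem:anchor-obstruction}, applied to one well-chosen section and one well-chosen function, and then to use commutativity of $\cdot_A$.

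\textbf{Step 1: a normalising section and function.} Since $a_A$ does not vanish at $p$, I pick $X_0\in\Gamma(A)$ with $a_A(X_0)_p\neq0$. I then pick a function $f$ (a coordinate function cut off by a bump function) with $df_p(a_A(X_0)_p)\neq0$. On a neighbourhood $U$ of $p$ the function $c:=a_A(X_0)(f)$ has no zeros. On $U$ define
\[
\varphi(Y)=\frac{a_A(Y)(f)}{c}.
\]
This is $C^\infty(U)$-linear in $Y$, because the anchor is a bundle map, so $\varphi\in\Gamma(A^*|_U)$. It is nowhere vanishing since $\varphi(X_0)=1$. I put $W:=a_A(X_0)|_U$; this is the vector field of the statement, and $W_p\neq0$.

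\textbf{Step 2: the product has rank one.} Apply \eqref{eq:anchor-obstruction} with $X=X_0$ and divide by $c$:
\[
Z\cdot_A Y=\varphi(Y)\,(Z\cdot_A X_0).
\]
Commutativity and the same identity with the roles exchanged give $Z\cdot_A X_0=X_0\cdot_A Z=\varphi(Z)\,(X_0\cdot_A X_0)$. Hence $Z\cdot_A Y=\varphi(Z)\varphi(Y)\,u$ with $u:=X_0\cdot_A X_0$.

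\textbf{Step 3: the anchor when $u_p\neq0$.} Now assume $u_p\neq0$ and shrink $U$ so that $u$ has no zeros. Apply \eqref{eq:anchor-obstruction} with $Z=X_0$ and an arbitrary function $g$. By Step 2 this reads $a_A(X)(g)\,\varphi(Y)\,u=a_A(Y)(g)\,\varphi(X)\,u$. Since $u$ is nowhere zero, $a_A(X)(g)\varphi(Y)=a_A(Y)(g)\varphi(X)$. Taking $Y=X_0$ gives $a_A(X)(g)=\varphi(X)\,W(g)$ for all $g$. Hence $a_A=\varphi\otimes W$ on $U$.

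\textbf{Main obstacle: localisation.} \Cref{lem:anchor-obstruction} is stated for global sections and global functions, while $\varphi$, $u$ and the division by $c$ only make sense on $U$. I will handle this with bump functions. Every local section or function near a point $q\in U$ agrees near $q$ with a global one. All operations involved ($\cdot_A$, the anchor, and $a_A(X)(g)$) are local, because $\cdot_A$ is tensorial and the anchor is a bundle map. So the pointwise identities above hold at every point of $U$. Everything else is direct substitution.
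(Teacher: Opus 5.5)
Your proposal is correct and follows the paper's proof essentially step by step. It uses the same normalising pair $(X_0,f)$ and quotient $\varphi(Y)=a_A(Y)(f)/a_A(X_0)(f)$, the same commutativity argument giving $Z\cdot_A Y=\varphi(Z)\varphi(Y)u$ with $u=X_0\cdot_A X_0$, and the same specialisation $Z=Y=X_0$ in \eqref{eq:anchor-obstruction} to get $a_A=\varphi\otimes W$ with $W=a_A(X_0)$. Your remark that \eqref{eq:anchor-obstruction} is tensorial, so it can be applied to local sections via bump functions, makes explicit a localisation step the paper leaves implicit.
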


\begin{proof}
	Choose $X_0\in\Gamma(A)$ and $f_0\in C^\infty(M)$ with
	$a_A(X_0)(f_0)(p)\neq0$, and let $U$ be a neighbourhood of $p$ on which
	$a_A(X_0)(f_0)$ does not vanish. Since $a_A$ is a bundle map,
	\[
		\varphi(Y):=\frac{a_A(Y)(f_0)}{a_A(X_0)(f_0)}
	\]
	is $C^\infty(U)$-linear in $Y$, hence defines $\varphi\in\Gamma(A^*|_U)$, and
	$\varphi(X_0)=1$.
	
	Taking $X=X_0$ and $f=f_0$ in \eqref{eq:anchor-obstruction} gives
	$Z\cdot_A Y=\varphi(Y)(Z\cdot_A X_0)$. By commutativity,
	$\varphi(Y)(Z\cdot_A X_0)=\varphi(Z)(Y\cdot_A X_0)$, and putting $Z=X_0$ yields
	$Y\cdot_A X_0=\varphi(Y)u$ with $u:=X_0\cdot_A X_0$. Substituting back,
	$Z\cdot_A Y=\varphi(Z)\varphi(Y)u$. In particular $\varphi$ is determined by
	$\cdot_A$ once $X_0$ is chosen, and does not depend on $f_0$.
	
	Assume now $u_p\neq0$ and shrink $U$ so that $u$ does not vanish. Feeding the
	last formula into \eqref{eq:anchor-obstruction} gives
	\[
		a_A(X)(f)\,\varphi(Y)\varphi(Z)\,u
		=a_A(Y)(f)\,\varphi(X)\varphi(Z)\,u ,
	\]
	and taking $Z=X_0$, $Y=X_0$ leaves $a_A(X)(f)=\varphi(X)\,W(f)$ with
	$W:=a_A(X_0)$. Finally $W_p\neq0$, since otherwise $a_A$ would vanish at $p$.
\end{proof}

\begin{remark}\label{rmk:rank-one-normalisation}
The triple $(\varphi,u,W)$ is not canonical. As the proof shows, $\varphi$ does
not depend on $f_0$, so the only freedom is the choice of $X_0$. Replacing
$X_0$ by a section $X_0'$ with $\varphi(X_0')=\lambda$ nowhere vanishing gives
\[
	\varphi'=\lambda^{-1}\varphi,
	\qquad
	u'=X_0'\cdot_A X_0'=\lambda^{2}u,
	\qquad
	W'=a_A(X_0')=\lambda W,
\]
the first because $a_A(X_0')=\lambda W$ and the second by the product formula.
Both $a_A=\varphi\otimes W$ and the product $Z\cdot_A Y=\varphi(Z)\varphi(Y)u$
are unchanged, as they must be. The normalisation is fixed once and for all in
\Cref{thm:local-model}, where $X_0$ is rescaled so that it becomes an identity.
\end{remark}

\begin{theorem}\label{thm:rank-one}
	Let $(A,[-,-]_A,\cdot_A,a_A)$ be a transposed $F$-algebroid and let $p\in M$. If
	$a_A$ does not vanish at $p$ and the multiplication $\cdot_A$ does not vanish
	identically on the fibre $A_p$, then
	\[
		\operatorname{rank}_p A=1 .
	\]
\end{theorem}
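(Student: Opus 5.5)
The plan is to apply \Cref{prop:rank-one-product} at $p$ and then read off the rank from the product formula. Since $a_A$ does not vanish at $p$, that proposition supplies a neighbourhood $U$ of $p$, a nowhere vanishing $\varphi\in\Gamma(A^*|_U)$ and a section $u\in\Gamma(A|_U)$ such that
\[
	Z\cdot_A Y=\varphi(Z)\varphi(Y)\,u\qquad\text{on }U .
\]
Evaluating at $p$, the fibrewise product on $A_p$ is $z\cdot y=\varphi_p(z)\varphi_p(y)\,u_p$. The hypothesis that $\cdot_A$ does not vanish identically on $A_p$ then gives $u_p\neq0$. We may also assume, by shrinking $U$, that $u$ vanishes nowhere on $U$.

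Next we extract a second consequence of \eqref{eq:anchor-obstruction} at $p$. With $u_p\neq0$, the second part of \Cref{prop:rank-one-product} gives $a_A=\varphi\otimes W$ near $p$ with $W_p\neq0$. This alone does not bound the rank, so the extra input has to come from the bracket. The point is that the product formula is symmetric in a way that \eqref{eq:anchor-obstruction} already uses up. What remains is the transposed rule \eqref{eq:transposed-F} itself, applied to sections in $\ker\varphi$.

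Suppose $\operatorname{rank}_pA\geq2$. Then we can choose a nonzero section $K$ of $\ker\varphi$ near $p$, together with $X_0$ satisfying $\varphi(X_0)=1$. For any $Z$ we have $Z\cdot_A K=0$, and $a_A(K)=\varphi(K)W=0$. Applying \eqref{eq:transposed-F} with $X=K$, $Y=X_0$ and $Z=X_0$ gives
\[
	2X_0\cdot_A[K,X_0]=[0,X_0]+[K,u]=[K,u].
\]
Writing $u=gX_0+K'$ with $K'\in\ker\varphi$ and using \eqref{eq:algebroid-leibniz} with $a_A(K)=0$, we get $[K,u]=g[K,X_0]+[K,K']$. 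The left side is $2\varphi([K,X_0])\,u$.

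The hardest step is to turn these relations into a contradiction. The first attempt will be to replace $K$ by $fK$ with $f$ arbitrary. Since $a_A(K)=0$, the bracket $[fK,X_0]$ equals $f[K,X_0]-W(f)K$, so $\varphi([fK,X_0])=f\varphi([K,X_0])$. Comparing the $W(f)$ terms on both sides then yields $-W(f)K\cdot(\ldots)$ against $-W(f)\,\varphi(u)$-type terms. Concretely, from $[fK,u]=f[K,u]-\varphi(u)W(f)K$, where $a_A(u)=\varphi(u)W$, we obtain $\varphi(u)W(f)K=0$. Because $W_p\neq0$ we can choose $f$ with $W(f)(p)\neq0$, and $K_p\neq0$, so $\varphi(u)(p)=0$.

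So in fact $\varphi(u_p)=0$, and this degenerate case also has to be excluded. Using $a_A(u)=0$ and associativity, $(X_0\cdot X_0)\cdot X_0=\varphi(u)u=0$, and also $X_0\cdot(X_0\cdot X_0)=0$, so these give nothing new. The contradiction should instead come from \eqref{eq:transposed-F} with $Z=X_0$, $X=X_0$ and $Y=fX_0$, tracking the terms in $W(f)$. This produces $2W(f)u$ from the left side against the anchor terms on the right, and forces $u_p=0$, contradicting $u_p\neq0$. Hence $\ker\varphi_p=0$, and since $\varphi_p\neq0$ this means $\operatorname{rank}_pA=1$.
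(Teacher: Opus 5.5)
Your argument is correct and is essentially the paper's. The two identities you extract coincide with the paper's key identity $W(f)\varphi(Y)u=\varphi(u)W(f)Y$ specialised to two cases. For $Y=K\in\Gamma(\ker\varphi)$ it gives your $\varphi(u)W(f)K=0$. For $Y=X_0$ it gives $W(f)u=\varphi(u)W(f)X_0$; you only sketch this one via the triple $Z=X=X_0$, $Y=fX_0$, but that triple does give $2W(f)u=W(f)u+\varphi(u)W(f)X_0$. Both are obtained the same way in the paper: insert $Z=X_0$ and one argument equal to $X_0$ into \eqref{eq:transposed-F}, then test $C^\infty(M)$-linearity in the remaining argument.

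The differences are only organisational. The paper proves the identity for all $Y$ at once and argues directly: first $\varphi(u)(p)\neq0$, then $A_p$ is spanned by $u_p$. You argue by contradiction instead. The detour through $u=gX_0+K'$ and associativity is not needed.
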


\begin{proof}
	Keep the notation of \Cref{prop:rank-one-product}. By that proposition the
	multiplication on $A_p$ is $\varphi(Z)\varphi(Y)u_p$, so the hypothesis on
	$\cdot_A$ says exactly that $u_p\neq0$. Hence $a_A=\varphi\otimes W$ near $p$,
	with $W_p\neq0$.
	
	Insert $Z\cdot_A Y=\varphi(Z)\varphi(Y)u$ into \eqref{eq:transposed-F}. The left
	hand side is $2\varphi(Z)\varphi([X,Y]_A)u$, while
    \begin{align*}
        [Z\cdot_A X,Y]_A+[X,Z\cdot_A Y]_A
		&=\varphi(Z)\varphi(X)[u,Y]_A-\varphi(Z)\varphi(Y)[u,X]_A\\
        \quad &+\Bigl(a_A(X)\bigl(\varphi(Z)\varphi(Y)\bigr)
		-a_A(Y)\bigl(\varphi(Z)\varphi(X)\bigr)\Bigr)u ,
    \end{align*}
	by \eqref{eq:algebroid-leibniz}. Using $a_A=\varphi\otimes W$, the last bracket
	equals $\varphi(Z)\bigl(\varphi(X)W(\varphi(Y))-\varphi(Y)W(\varphi(X))\bigr)$.
	Taking $Z=X_0$, so that $\varphi(Z)=1$, we obtain
	\begin{equation}\label{eq:dagger}
		\varphi(X)[u,Y]_A-\varphi(Y)[u,X]_A
		+\bigl(\varphi(X)W(\varphi(Y))-\varphi(Y)W(\varphi(X))\bigr)u
		=2\varphi([X,Y]_A)\,u .
	\end{equation}
	Setting $X=X_0$ in \eqref{eq:dagger} and using $W(1)=0$ gives
	\begin{equation}\label{eq:ddagger}
		[u,Y]_A=\varphi(Y)[u,X_0]_A+c(Y)\,u,
		\qquad
		c(Y):=2\varphi([X_0,Y]_A)-W(\varphi(Y)).
	\end{equation}
	We now compare the two ways of computing $[u,fY]_A$. On one hand
	\eqref{eq:algebroid-leibniz} gives
	$[u,fY]_A=f[u,Y]_A+\varphi(u)W(f)Y$. On the other hand \eqref{eq:ddagger}
	applied to $fY$ gives $[u,fY]_A=f\varphi(Y)[u,X_0]_A+c(fY)u$. Moreover
	\[
		\varphi([X_0,fY]_A)=f\varphi([X_0,Y]_A)+W(f)\varphi(Y),
		\qquad
		W(\varphi(fY))=fW(\varphi(Y))+W(f)\varphi(Y),
	\]
	so that $c(fY)=fc(Y)+W(f)\varphi(Y)$. Comparing the two expressions for
	$[u,fY]_A$ and using \eqref{eq:ddagger} once more, we are left with
	\[
		W(f)\,\varphi(Y)\,u=\varphi(u)\,W(f)\,Y
		\qquad\text{for all } f\in C^\infty(M),\ Y\in\Gamma(A).
	\]
	Choose $f$ with $W(f)(p)\neq0$, which is possible because $W_p\neq0$. Then
	\begin{equation}\label{eq:section-sign}
		\varphi(Y)\,u_p=\varphi(u)\,Y_p
		\qquad\text{for all } Y\in\Gamma(A).
	\end{equation}
	If $\varphi(u)(p)=0$, then $\varphi(Y)u_p=0$ for every $Y$, and taking $Y=X_0$
	gives $u_p=0$, contrary to the hypothesis. Hence $\varphi(u)(p)\neq0$ and
	\eqref{eq:section-sign} expresses every $Y_p$ as a multiple of $u_p$. Therefore
	$A_p$ is spanned by $u_p$.
\end{proof}

Rank one is not only forced, but it also determines the structure completely.

\begin{theorem}\label{thm:local-model}
	Let $(A,[-,-]_A,\cdot_A,a_A)$ be a transposed $F$-algebroid and let $p\in M$ be
	a point at which $a_A$ and $\cdot_A$ do not vanish. Then on a neighbourhood $U$
	of $p$ there are a nowhere vanishing $e\in\Gamma(A|_U)$ and a nowhere vanishing
	vector field $V$ on $U$ such that $A|_U$ is the trivial line bundle generated by
	$e$ and
	\[
		(fe)\cdot_A(ge)=fg\,e,
		\qquad
		a_A(fe)=fV,
		\qquad
		[fe,ge]_A=\bigl(fV(g)-gV(f)\bigr)e .
	\]
	In particular $e$ is a local identity and $V=a_A(e)$. Conversely, for every
	manifold $U$ and every vector field $V$ on $U$, these formulas define a
	transposed $F$-algebroid structure on the trivial line bundle over $U$.
\end{theorem}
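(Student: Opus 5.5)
We start from \Cref{prop:rank-one-product} and \Cref{thm:rank-one}. At $p$ the anchor and the product do not vanish, so the section $u=X_0\cdot_A X_0$ satisfies $u_p\neq0$, the bundle has rank one near $p$, and $\varphi(u)(p)\neq0$. On a neighbourhood $U$ on which $u$ and $\varphi(u)$ are nowhere zero, $A|_U$ is therefore the trivial line bundle spanned by $u$. As in \Cref{rmk:rank-one-normalisation}, we then normalise by setting $e:=\varphi(u)^{-1}u$. Since $\varphi(e)=1$, the product formula gives $e\cdot_A e=\varphi(e)^2u=u$, so we must check that $u=e$. Rank one and $\varphi(u)\neq0$ make $\varphi$ an isomorphism $A|_U\to U\times\mathbb R$, so each section is $Y=\varphi(Y)\,\varphi(u)^{-1}u=\varphi(Y)e$. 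In particular $u=\varphi(u)e$. On the other hand, $u=e\cdot_A e$ by the computation above, so $\varphi(u)=1$, hence $u=e$. A cleaner way to say this is $e\cdot_A e=\varphi(e)^2 u$, and substituting $u=\varphi(u)e$ gives $e\cdot_Ae=\varphi(u)e$; I expect this normalisation bookkeeping to be the fiddliest step. Once it is done, $(fe)\cdot_A(ge)=fg\,\varphi(e)^2u=fg\,e$, and $e$ is a local identity.

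For the anchor, \Cref{prop:rank-one-product} gives $a_A=\varphi\otimes W$, so $a_A(fe)=f\varphi(e)W=fW$. Put $V:=a_A(e)=W$. It is nowhere vanishing on $U$ after shrinking, because $W_p\neq0$.

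For the bracket, the unit gives a shortcut. $\Gamma(A|_U)$ is a unital commutative algebra with unit $e$, and the transposed rule \eqref{eq:transposed-F} holds on it; restricting to $U$ is legitimate because the operations are local, being a Lie algebroid bracket and a tensorial product. Evaluating the rule at the unit, as in the ungraded unital classification of \cite{Bai2023} recalled in \Cref{sec:preliminaries}, gives $[X,Y]_A=X\cdot_A D(Y)-Y\cdot_A D(X)$ with $D=[e,-]_A$. Next, $D(e)=[e,e]_A=0$, and \eqref{eq:algebroid-leibniz} gives $D(ge)=g[e,e]_A+V(g)e=V(g)e$. Substituting, $[fe,ge]_A=fe\cdot_A V(g)e-ge\cdot_A V(f)e=(fV(g)-gV(f))e$.

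For the converse, fix $V$ on $U$ and define the structure on $U\times\mathbb R$ by the displayed formulas. Under $fe\leftrightarrow f$ this is the algebra $C^\infty(U)$ with the bracket $[f,g]_V$, which is a transposed Poisson algebra by \Cref{prop:transposed-manifold}. The Lie algebroid Leibniz rule is immediate: $[fe,h\,ge]=(fV(hg)-hgV(f))e=h[fe,ge]+fV(h)\,ge$, which matches $a_A(fe)=fV$. The anchor is a Lie algebra morphism because $[fV,gV]=(fV(g)-gV(f))V$. \Cref{prop:HM-free} then supplies the Hertling--Manin relation, so the structure is a transposed $F$-algebroid.
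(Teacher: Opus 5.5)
There is a genuine gap in the normalisation step, the one you flagged as fiddly. Since $\varphi(X_0)=1$ and $\varphi$ is an isomorphism on the rank-one bundle, you have $u=\varphi(u)X_0$. So your section $e:=\varphi(u)^{-1}u$ is just $X_0$ again.

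Your own computation then gives $e\cdot_A e=\varphi(u)\,e$, and more generally $e\cdot_A Y=\varphi(e)\varphi(Y)u=\varphi(u)\,Y$. The inference ``$u=\varphi(u)e$ and $u=e\cdot_A e$, hence $\varphi(u)=1$'' is circular: combining the two identities only returns $e\cdot_A e=\varphi(u)e$, and concluding $\varphi(u)=1$ from it presupposes $e\cdot_A e=e$, which is what you set out to prove.

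Nothing forces $\varphi(u)\equiv 1$. In the local model with identity $\varepsilon$ and $V_p\neq0$, choose $X_0=2\varepsilon$ in \Cref{prop:rank-one-product}. Then $\varphi(g\varepsilon)=g/2$ and $u=4\varepsilon$, so $\varphi(u)=2$, your $e$ equals $2\varepsilon$, and $e\cdot_A e=2e$. With your choice, therefore:
\begin{itemize}
\item the product formula reads $(fe)\cdot_A(ge)=fg\,\varphi(u)\,e$ rather than $fg\,e$;
\item the assertion that $e$ is a local identity fails;
\item the appeal to the unital classification with unit $e$ in the bracket step is unjustified.
\end{itemize}

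The fix is to normalise by $\varphi(e)=\varphi(u)^{-1}$ instead of $\varphi(e)=1$, that is $e:=\varphi(u)^{-1}X_0=\varphi(u)^{-2}u$. This is the rescaling of \Cref{rmk:rank-one-normalisation} with $\lambda=\varphi(u)^{-1}$, and it is what the paper does. Then $e\cdot_A Y=\varphi(e)\varphi(u)\,Y=Y$, so $e$ is an identity, and $(fe)\cdot_A(ge)=fg\,e$. The anchor is $a_A(fe)=fV$ with $V=a_A(e)=\varphi(u)^{-1}W$, which is still nowhere vanishing near $p$ because $W_p\neq0$ and $\varphi(u)(p)\neq0$.

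Once $e$ is corrected, the rest of your argument goes through. The bracket formula in fact needs no unit at all: for any trivialising section $e$, \eqref{eq:algebroid-leibniz}, skew-symmetry and $[e,e]_A=0$ give $[fe,ge]_A=\bigl(f\,a_A(e)(g)-g\,a_A(e)(f)\bigr)e$ directly. So only the product formula and the identity property depend on the right choice of $e$. Your converse is correct and coincides with the paper's.
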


\begin{proof}
	By \Cref{thm:rank-one} the bundle has rank one near $p$, and by
	\Cref{prop:rank-one-product} we have $Z\cdot_A Y=\varphi(Z)\varphi(Y)u$ with
	$\varphi(X_0)=1$ and $u_p\neq0$. Identity \eqref{eq:section-sign} reads
	$\varphi(Y)u=\varphi(u)Y$. Put $c:=\varphi(u)$. Taking $Y=X_0$ gives $u=cX_0$,
	and $c$ is nowhere vanishing near $p$, since $\varphi(u)(p)\neq0$ was
	established in the proof of \Cref{thm:rank-one}. Combining the two identities,
	\[
		Z\cdot_A Y=\varphi(Z)\varphi(Y)u=c\,\varphi(Z)\,Y ,
	\]
	so that $X_0$ is an identity only if $c=1$. We therefore normalise differently
	and set
	\[
		e:=c^{-1}X_0=c^{-2}u,
		\qquad\text{so that}\qquad
		\varphi(e)=c^{-1}.
	\]
	Then $e\cdot_A Y=c\,\varphi(e)\,Y=Y$, so $e$ is an identity on a neighbourhood
	of $p$, it vanishes nowhere, and $(fe)\cdot_A(ge)=fg\,e$.
	
	By \Cref{prop:rank-one-product} the anchor is $a_A=\varphi\otimes W$ with
	$W=a_A(X_0)$, whence $a_A(fe)=f\varphi(e)W=fV$ for
	\[
		V:=a_A(e)=c^{-1}W ,
	\]
	which is nowhere zero near $p$ because $W_p\neq0$. Finally $D:=[e,-]_A$
	satisfies $D(ge)=g[e,e]_A+a_A(e)(g)e=V(g)e$, and the unital case of the ungraded
	theory gives $[X,Y]_A=X\cdot_A D(Y)-Y\cdot_A D(X)$. Using $e\cdot_A e=e$,
	\[
		[fe,ge]_A=(fe)\cdot_A V(g)e-(ge)\cdot_A V(f)e
		=\bigl(fV(g)-gV(f)\bigr)e .
	\]
	
	For the converse, the displayed bracket is the transposed Poisson bracket
	attached to the derivation $V$ of $C^\infty(U)$, so it is a Lie bracket
	satisfying \eqref{eq:transposed-F}. The Leibniz rule
	\eqref{eq:algebroid-leibniz} holds with anchor $fe\mapsto fV$, and
	$a_A([fe,ge]_A)=\bigl(fV(g)-gV(f)\bigr)V=[fV,gV]$, so $A$ is a Lie algebroid.
	The Hertling--Manin relation follows from \Cref{prop:HM-free}.
\end{proof}

\begin{remark}
	The local model also satisfies the multiplicativity condition
	$a_A(X\cdot_A Y)=a_A(X)\bullet a_A(Y)$ of \Cref{rmk:two-definitions}, for the
	product $\bullet$ on the line distribution generated by $V$ determined by
	$V\bullet V=V$. So the classification is insensitive to which of the two
	definitions of $F$-algebroid is adopted.
\end{remark}

Putting the two halves together, we obtain a complete description in the unital
case.

\begin{proposition}\label{cor:unital-dichotomy}
	Let $(A,[-,-]_A,\cdot_A,e,a_A)$ be a transposed $F$-algebroid with identity over
	a connected base $M$, with $A\neq0$. Then exactly one of the following holds.
	\begin{enumerate}
		\item $\operatorname{rank}A\geq2$. Then $a_A=0$, and the structure is the one of
		      \Cref{prop:zero-anchor-classification}, determined by the derivation
		      $D=[e,-]_A$ of $(A,\cdot_A)$.
		\item $\operatorname{rank}A=1$. Then $e$ trivialises $A$, and the structure is
		      determined on all of $M$ by the single vector field $V=a_A(e)$ through
		      \[
		      	(fe)\cdot_A(ge)=fg\,e,
		      	\qquad
		      	a_A(fe)=fV,
		      	\qquad
		      	[fe,ge]_A=\bigl(fV(g)-gV(f)\bigr)e .
		      \]
		      Conversely, every vector field on $M$ arises this way, and $V$ is allowed to
		      vanish, on a closed subset or identically.
	\end{enumerate}
	In particular, a transposed $F$-algebroid with identity and rank at least two has
	vanishing anchor, and one of rank one is globally the Jacobi model of a single
	vector field.
\end{proposition}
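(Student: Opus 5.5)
The plan is to split by the rank of $A$, which is constant because $M$ is connected and $A$ is a vector bundle. Since $A\neq0$, the rank is at least one, so the two cases are exhaustive and mutually exclusive. Throughout, the identity $e$ is a global section with $e\cdot_A X=X$, so $e$ is nowhere vanishing. Hence the multiplication is non-zero on every fibre: $e_p\cdot e_p=e_p\neq0$.

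\emph{Rank at least two.} Suppose $a_A$ does not vanish at some point $p$. Since $\cdot_A$ does not vanish identically on $A_p$, \Cref{thm:rank-one} gives $\operatorname{rank}_pA=1$, which is a contradiction. Therefore $a_A=0$ everywhere. The converse part of \Cref{prop:zero-anchor-classification} then shows that the bracket is $[X,Y]_A=X\cdot_A D(Y)-Y\cdot_A D(X)$, with $D=[e,-]_A$ a tensorial derivation of $\cdot_A$.

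\emph{Rank one.} First, $e$ is nowhere vanishing, so it trivialises $A$ globally, and every section can be written as $fe$. The product is $(fe)\cdot_A(ge)=fg\,(e\cdot_A e)=fg\,e$, by bilinearity and $e\cdot e=e$. Set $V:=a_A(e)$, a global vector field; then $a_A(fe)=fV$ by $C^\infty$-linearity of the anchor. For the bracket, we do not need \Cref{thm:local-model}, which would only work where $V\neq0$. Instead we argue globally, as in the proof of that theorem. Since $\Gamma(A)$ is a unital commutative algebra with unit $e$, the ungraded unital classification (Proposition~2.4 of \cite{Bai2023}, recalled in \Cref{sec:preliminaries}) gives $[X,Y]_A=X\cdot_A D(Y)-Y\cdot_A D(X)$ with $D=[e,-]_A$. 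By \eqref{eq:algebroid-leibniz} and $[e,e]_A=0$, we get $D(ge)=V(g)e$, and substituting yields $[fe,ge]_A=(fV(g)-gV(f))e$ on all of $M$. This is valid regardless of where $V$ vanishes.

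For the converse, take any vector field $V$ on $M$, possibly vanishing on a closed set or identically. The formulas define a transposed $F$-algebroid on $M\times\mathbb R$: the verification in the converse of \Cref{thm:local-model} never used $V\neq0$ and is global. It consists of checking that the transposed bracket comes from the derivation $V$ of $C^\infty(M)$, the Leibniz rule with anchor $fe\mapsto fV$, the identity $a_A([fe,ge])=[fV,gV]$, and Hertling--Manin via \Cref{prop:HM-free}. The identity is $e$.

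The main subtle point is ensuring that rank one gives a \emph{global} description even on the zero set of $V$. \Cref{thm:local-model} only covers points where the anchor is non-zero, which is why the rank-one case should go directly through the unital algebraic classification rather than by patching local models. The remaining "in particular" sentence just restates the two cases.
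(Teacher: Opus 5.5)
Your proposal is correct and follows the paper's proof essentially step by step. You use the same rank split, the same appeal to \Cref{thm:rank-one} and \Cref{prop:zero-anchor-classification} when the rank is at least two, the same global use of the unital classification with $D(ge)=V(g)e$ in rank one, and the converse of \Cref{thm:local-model} with $U=M$. The only difference is cosmetic: you state up front, for both cases, that $e$ is nowhere vanishing (if $e_p=0$ then $Y_p=e_p\cdot_A Y_p=0$ for all $Y$, so $A_p=0$), whereas the paper writes this argument out only in the rank-one case.
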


\begin{proof}
	The rank of $A$ is locally constant, hence constant on $M$, and it is at least
	one because $A\neq0$. So exactly one of the two cases occurs.
	
	Assume $\operatorname{rank}A\geq2$. The identity gives $e\cdot_Ae=e\neq0$, so
	$\cdot_A$ vanishes at no point. If $a_A$ were non-zero at some $p$, then
	\Cref{thm:rank-one} would give $\operatorname{rank}_pA=1$, a contradiction.
	Hence $a_A=0$, and \Cref{prop:zero-anchor-classification} applies.
	
	Assume $\operatorname{rank}A=1$. If $e_p=0$ for some $p$, then
	$Y_p=e_p\cdot_AY_p=0$ for every $Y\in\Gamma(A)$, so $A_p=0$, against
	$\operatorname{rank}A=1$. Thus $e$ is a nowhere vanishing section of a line
	bundle, it trivialises $A$, and $\Gamma(A)=C^\infty(M)\,e$ with
	$(fe)\cdot_A(ge)=fg\,e$. Since $a_A$ is a bundle map, $a_A(fe)=f\,a_A(e)=fV$.
	
	The triple $(\Gamma(A),\cdot_A,[-,-]_A)$ is a unital transposed Poisson algebra,
	so the unital case of the ungraded theory gives
	$[X,Y]_A=X\cdot_AD(Y)-Y\cdot_AD(X)$ with $D=[e,-]_A$. By
	\eqref{eq:algebroid-leibniz} and $[e,e]_A=0$ we have
	$D(ge)=g[e,e]_A+a_A(e)(g)\,e=V(g)\,e$, whence
	$[fe,ge]_A=\bigl(fV(g)-gV(f)\bigr)e$. None of these steps requires $V$ to be
	non-zero. The converse is the second half of \Cref{thm:local-model}, applied
	with $U=M$.
\end{proof}

The contrast with \Cref{thm:degree-zero-bundle} is worth recording. On
$\Gamma(\Lambda^\bullet E)$ the transposed brackets of degree zero are the
gauge derivations of $E$, that is the first order operators with an arbitrary
symbol. Here the anchor is exactly the symbol of $D=[e,-]_A$, and
\Cref{cor:unital-dichotomy} says that it is forced to vanish unless the rank is
one. Transposedness tolerates a symbol only in rank one.

The tangent $F$-algebroid of \Cref{ex:tangent-F} has anchor the identity, so it
is never inert. \Cref{thm:rank-one} then determines it completely.

\begin{corollary}\label{cor:F-manifold}
	Let $(M,\bullet,e)$ be an $F$-manifold with identity, over a connected base. Its
	tangent $F$-algebroid $(TM,[-,-]_{\mathfrak X(M)},\bullet,e,\mathrm{Id})$ is
	transposed if and only if $\dim M=1$.
\end{corollary}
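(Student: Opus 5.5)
The plan is to apply \Cref{cor:unital-dichotomy} to the tangent $F$-algebroid $A=TM$, whose anchor is $\mathrm{Id}$ and whose identity is the unit field $e$ of the $F$-manifold. The two implications are handled separately.

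\emph{Transposed implies $\dim M=1$.} Suppose the tangent $F$-algebroid is transposed. If $\dim M=0$ then $TM=0$; I will either exclude this case by convention ($M$ a connected manifold of positive dimension) or note it separately. So assume $TM\neq0$. Case (1) of \Cref{cor:unital-dichotomy} would give $\mathrm{rank}\,TM\geq2$ together with $a_A=\mathrm{Id}=0$, which is impossible. Alternatively, apply \Cref{thm:rank-one} directly: at every $p$ the anchor $\mathrm{Id}$ is nonzero, and $e_p\bullet e_p=e_p\neq0$ (since $e_p=0$ would force $T_pM=0$). Hence $\mathrm{rank}_p TM=1$, that is, $\dim M=1$.

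\emph{$\dim M=1$ implies transposed.} Let $\dim M=1$. Since $e$ is an identity it vanishes nowhere, by the argument in the proof of \Cref{cor:unital-dichotomy}, so $e$ trivialises $TM$. Then $(fe)\bullet(ge)=fg\,e$. The bracket of vector fields satisfies $[fe,ge]=fe(g)e-ge(f)e=\bigl(fV(g)-gV(f)\bigr)e$ with $V=e=a_A(e)$. This is exactly the model of \Cref{thm:local-model} with $U=M$, whose converse half shows that this structure is a transposed $F$-algebroid. A direct check is also easy: it is the bracket $[-,-]_D$ attached to the derivation $D=e$ of the unital algebra $C^\infty(M)\cong\Gamma(TM)$, via \eqref{eq:derivation-bracket}.

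The only delicate point I expect is bookkeeping rather than mathematics. One must confirm that the product on $TM$ is forced to be $(fe)\bullet(ge)=fg\,e$, which follows from $C^\infty$-bilinearity and $e\bullet e=e$. One must also dispose of the degenerate case $\dim M=0$. No semisimplicity assumption enters at any point.
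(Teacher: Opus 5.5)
Your proposal is correct and is essentially the paper's proof: the forward direction is \Cref{thm:rank-one} applied with the nowhere vanishing anchor $\mathrm{Id}$ and $e_p\bullet e_p=e_p\neq0$, and the converse identifies the one-dimensional tangent $F$-algebroid with the model of \Cref{thm:local-model} for $V=e$. Your concern about $\dim M=0$ is justified: for $M$ a point the zero bundle is trivially transposed, so the statement tacitly requires $\dim M\geq1$, and the paper's proof also assumes this without saying so when it writes $e\bullet e=e\neq0$.
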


\begin{proof}
	The anchor is the identity, hence nowhere vanishing, and $e\bullet e=e\neq0$, so
	the multiplication is nowhere vanishing. If the tangent $F$-algebroid is
	transposed, \Cref{thm:rank-one} gives $\dim T_pM=1$ for every $p$.
	
	Conversely, let $\dim M=1$. Then $e$ is a nowhere vanishing vector field, every
	vector field is $fe$, and $e\bullet e=e$ forces $(fe)\bullet(ge)=fge$. The Lie
	bracket of vector fields is $[fe,ge]=(fe(g)-ge(f))e$, which is the local model
	of \Cref{thm:local-model} with $V=e$. Hence the tangent $F$-algebroid is
	transposed.
\end{proof}

This supersedes the semisimple computation, in which the same conclusion is
reached under the additional hypothesis that $\bullet$ be semisimple on a dense
open subset. No semisimplicity is needed, and no canonical coordinates are used.

\begin{remark}
	The one-dimensional case is not vacuous. On $M=\mathbb R$ with coordinate $u$
	and $e=\partial_u$, the tangent $F$-algebroid is transposed, its bracket being
	the Wronskian $[f\partial_u,g\partial_u]=(fg'-f'g)\partial_u$. This is the
	smallest example of a transposed $F$-algebroid with invertible anchor, and by
	\Cref{cor:F-manifold} it is essentially the only one of tangent type.
\end{remark}

Rank one is not an accident of the computation. It points at the geometry in
which the transposed identity is at home.

Let $L\to M$ be a line bundle. Recall that a \emph{Jacobi bundle} structure on
$L$ is a Lie bracket on $\Gamma(L)$ which is a first order differential operator
in each argument, in the sense of Kirillov's local Lie algebras
\cite{Kirillov1976} and of Marle \cite{Marle1991}. For the trivial line bundle
such a bracket is
\begin{equation}\label{eq:jacobi}
	\{f,g\}_{(\Lambda,V)}=\Lambda(df,dg)+fV(g)-gV(f),
\end{equation}
for a bivector $\Lambda$ and a vector field $V$ subject to the usual
integrability conditions.

\begin{proposition}\label{prop:jacobi}
	Let $(A,[-,-]_A,\cdot_A,e,a_A)$ be a transposed $F$-algebroid with identity and
	$\operatorname{rank}A=1$, over a connected base $M$. Then $e$ is a nowhere vanishing global
	section, so $A$ is the trivial line bundle, and under the induced identification
	$\Gamma(A)\cong C^\infty(M)$ the bracket is the Jacobi bracket
	\eqref{eq:jacobi} with $\Lambda=0$ and $V=a_A(e)$. Conversely every vector field
	on $M$ arises in this way.
\end{proposition}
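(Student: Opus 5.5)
The plan is to read this off from the rank one case of \Cref{cor:unital-dichotomy} and then compare with \eqref{eq:jacobi}. First I show that $e$ vanishes nowhere. If $e_p=0$ at some $p$, then for every $Y\in\Gamma(A)$ we would have $Y_p=(e\cdot_A Y)_p=e_p\cdot_A Y_p=0$, using the $C^\infty(M)$-bilinearity of $\cdot_A$ (so that the product is pointwise). Since $A$ has rank one, sections span every fibre, so this is impossible. Hence $e$ is a nowhere vanishing section of a line bundle. It therefore trivialises $A$, and the map $f\mapsto fe$ identifies $C^\infty(M)$ with $\Gamma(A)$. Connectedness only guarantees that the rank is constant; it is not needed beyond the hypothesis $\operatorname{rank}A=1$.

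Next I transport the structure along $f\mapsto fe$. The product becomes $(fe)\cdot_A(ge)=fg\,(e\cdot_A e)=fg\,e$, which is ordinary multiplication of functions. Put $V:=a_A(e)$; since $a_A$ is a bundle map, $a_A(fe)=fV$. For the bracket, $(\Gamma(A),\cdot_A,[-,-]_A)$ is a unital transposed Poisson algebra with unit $e$, so the unital classification recalled after \eqref{eq:derivation-bracket} (Proposition~2.4 of \cite{Bai2023}) gives $[X,Y]_A=X\cdot_A D(Y)-Y\cdot_A D(X)$ with $D=[e,-]_A$. By \eqref{eq:algebroid-leibniz} and $[e,e]_A=0$ we get $D(ge)=V(g)e$. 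Therefore
\[
[fe,ge]_A=\bigl(fV(g)-gV(f)\bigr)e ,
\]
which under the identification is exactly \eqref{eq:jacobi} with $\Lambda=0$ and vector field $V$. All of this is already contained in case (2) of \Cref{cor:unital-dichotomy}, and I would cite it rather than repeat it.

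For the converse, take any vector field $V$ on $M$ and define the structure on the trivial line bundle by the displayed formulas. The converse half of \Cref{thm:local-model}, applied with $U=M$, shows that this is a transposed $F$-algebroid: it checks the Lie algebroid axioms and \eqref{eq:transposed-F}, and the Hertling--Manin relation follows from \Cref{prop:HM-free}. The constant section $1$ is an identity, and $a_A(1)=V$. It remains to note that $(\Lambda,V)=(0,V)$ is a genuine Jacobi pair. The integrability conditions $[\Lambda,\Lambda]=2V\wedge\Lambda$ and $[V,\Lambda]=0$ hold trivially when $\Lambda=0$, which is consistent with the Jacobi identity for the Wronskian bracket.

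I expect no serious obstacle. The only points that need care are that $e$ vanishes nowhere, which rests on the pointwise nature of $\cdot_A$, and that $V$ may vanish somewhere without affecting any step. Neither argument divides by $V$ or uses \Cref{thm:rank-one}, so both go through.
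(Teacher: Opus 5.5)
Your proposal is correct and follows the paper's proof: it invokes case (2) of \Cref{cor:unital-dichotomy} for the nowhere-vanishing of $e$ and the Wronskian form $[fe,ge]_A=\bigl(fV(g)-gV(f)\bigr)e$, and the second half of \Cref{thm:local-model} with $U=M$ for the converse. Your re-derivation of the corollary's argument and your check that $(0,V)$ trivially satisfies the Jacobi integrability conditions are harmless additions.
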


\begin{proof}
	By \Cref{cor:unital-dichotomy} the section $e$ vanishes nowhere and
	$[fe,ge]_A=(fV(g)-gV(f))e$ with $V=a_A(e)$. Under $fe\mapsto f$ this is
	\eqref{eq:jacobi} with $\Lambda=0$. The converse is the second half of
	\Cref{thm:local-model}.
\end{proof}

The same phenomenon was met in \Cref{prop:transposed-manifold}, where the
transposed Poisson structures on a manifold turned out to be the Jacobi brackets
with vanishing bivector part. \Cref{prop:jacobi} says that the algebroid level
does not enlarge the picture: passing from functions to sections of a vector
bundle, and from a Lie bracket to a Lie algebroid bracket, produces nothing new
as long as the anchor survives.

\begin{remark}
	The natural Lie algebroid of Jacobi geometry is the gauge algebroid
	$\mathfrak D(L)$ of first order operators on a line bundle
	\cite{Kirillov1976,Marle1991}, and Jacobi structures on $L$ correspond to
	suitable bivectors on $\mathfrak D(L)$. \Cref{prop:jacobi} places the transposed
	condition inside this picture as the locus $\Lambda=0$. This is consistent with
	\Cref{thm:degree-zero-bundle}, where the classifying object in degree zero was
	again a gauge algebroid.
\end{remark}

We close by recording how the transposed condition interacts with the standard
constructions of $F$-algebroid theory. Throughout this subsection
$(A,[-,-]_A,\cdot_A,e,a_A)$ is an $F$-algebroid with an identity $e$. We first
show that Dubrovin-type twists and Nijenhuis deformations preserve
transposedness, and then that the derivation construction has a pre-Lie origin.
After \Cref{thm:local-model} none of this is surprising, since the bracket of a
transposed $F$-algebroid with identity is a Wronskian.

Recall that the \emph{Hertling--Manin tensor} of $A$ is
\[
        P_X(Y,Z)=[X,Y\cdot_AZ]_A-[X,Y]_A\cdot_AZ-Y\cdot_A[X,Z]_A .
\]
A section $E\in\Gamma(A)$ is a \emph{pseudoeventual identity}
\cite[Def.~4.1]{CruzMorales2023} if
\begin{equation}\label{eq:pseudoeventual}
        P_E(X,Y)=[e,E]_A\cdot_AX\cdot_AY,
        \qquad X,Y\in\Gamma(A).
\end{equation}
The tensor $P_E$ measures the failure of $[E,-]_A$ to be a derivation of
$\cdot_A$. The condition expressed by \Cref{eq:pseudoeventual} requires this failure to be the
multiplication operator attached to the single section $[e,E]_A$. The identity
$e$ is itself a pseudoeventual identity, since $P_e=0$ and $[e,e]_A=0$. A
pseudoeventual identity is called an \emph{eventual identity} when it is
invertible, that is when $E^{-1}\cdot_AE=e$ for some $E^{-1}\in\Gamma(A)$.

By \cite[Thm.~4.3]{CruzMorales2023}, a section $E$ is a pseudoeventual identity
if and only if
\[
        X\cdot_EY=X\cdot_AY\cdot_AE
\]
turns $(A,[-,-]_A,\cdot_E,a_A)$ into an $F$-algebroid. This new $F$-algebroid
carries an identity precisely when $E$ is invertible, in which case the identity
is $E^{-1}$ and the construction is Dubrovin's dual
\cite[Thm.~4.4]{CruzMorales2023}.

In the transposed case the notion is not restrictive at all.

\begin{proposition}\label{prop:all-pseudoeventual}
Let $(A,[-,-]_A,\cdot_A,e,a_A)$ be a transposed $F$-algebroid with an identity.
Then every section of $A$ is a pseudoeventual identity.
\end{proposition}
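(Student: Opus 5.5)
The plan is to reduce everything to the unital description of transposed Poisson brackets and then compute $P_E$ directly. By \Cref{def:transposed-F}, $(\Gamma(A),\cdot_A,[-,-]_A)$ is a transposed Poisson algebra over $\mathbb R$. Since $e$ is an identity, this algebra is unital with unit $e$. The unital case of the ungraded theory recalled in \Cref{sec:preliminaries} (Proposition~2.4 of \cite{Bai2023}, already used in \Cref{prop:zero-anchor-classification} and \Cref{cor:unital-dichotomy}) then gives two facts. First, $D:=[e,-]_A$ is a derivation of $\cdot_A$. Second, the bracket has the Wronskian form
\[
[X,Y]_A=X\cdot_A D(Y)-Y\cdot_A D(X),\qquad X,Y\in\Gamma(A).
\]
No fibrewise or local argument is needed, since these are identities between sections.

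Next, fix an arbitrary $E\in\Gamma(A)$ and expand the three terms of the Hertling--Manin tensor $P_E(X,Y)$ using this formula. For the first term, the derivation property of $D$ gives
\[
[E,X\cdot_AY]_A=E\cdot_A X\cdot_A D(Y)+E\cdot_A Y\cdot_A D(X)-X\cdot_A Y\cdot_A D(E).
\]
For the other two terms,
\[
[E,X]_A\cdot_A Y=E\cdot_A D(X)\cdot_A Y-X\cdot_A D(E)\cdot_A Y
\]
and
\[
X\cdot_A[E,Y]_A=X\cdot_A E\cdot_A D(Y)-X\cdot_A Y\cdot_A D(E).
\]
Because $\cdot_A$ is commutative and associative, the terms containing $D(X)$ cancel, and so do the terms containing $D(Y)$. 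The three terms in $D(E)$ combine, with signs $-1+1+1$, into a single copy:
\[
P_E(X,Y)=D(E)\cdot_A X\cdot_A Y=[e,E]_A\cdot_A X\cdot_A Y.
\]
This is exactly \eqref{eq:pseudoeventual}, so $E$ is a pseudoeventual identity.

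I do not expect any real obstacle. The only point that needs care is checking that the unital ungraded theory applies to $\Gamma(A)$ as a unital commutative $\mathbb R$-algebra, with $D=[e,-]_A$ a genuine derivation of $\cdot_A$. This holds even though $D$ need not be $C^\infty(M)$-linear when the anchor is non-zero. The argument never uses tensoriality of $D$: the computation above involves only the derivation property and the Wronskian formula.
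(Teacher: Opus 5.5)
Your proof is correct and follows the paper's argument: the bracket is the Wronskian of the derivation $D=[e,-]_A$, and the same three-term expansion of the Hertling--Manin tensor gives $P_E(X,Y)=D(E)\cdot_A X\cdot_A Y=[e,E]_A\cdot_A X\cdot_A Y$. You justify the Wronskian form by applying the unital ungraded theory to $\Gamma(A)$, which is the better-fitting reference; the paper instead cites \Cref{thm:local-model}, which is local and assumes that both the anchor and the product are non-vanishing.
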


\begin{proof}
By \Cref{thm:local-model} the bracket is the Wronskian
$[X,Y]_A=X\cdot_AD(Y)-Y\cdot_AD(X)$ attached to $D=[e,-]_A$, and $D$ is a
derivation of $(\Gamma(A),\cdot_A)$. For $E,X,Y\in\Gamma(A)$,
\[
\begin{aligned}
\relax[E,X\cdot_AY]_A
&=E\cdot_AD(X)\cdot_AY+E\cdot_AX\cdot_AD(Y)-X\cdot_AY\cdot_AD(E), \\
[E,X]_A\cdot_AY&=E\cdot_AD(X)\cdot_AY-X\cdot_AY\cdot_AD(E), \\
X\cdot_A[E,Y]_A&=E\cdot_AX\cdot_AD(Y)-X\cdot_AY\cdot_AD(E).
\end{aligned}
\]
Subtracting the last two lines from the first gives
$P_E(X,Y)=D(E)\cdot_AX\cdot_AY$, and $D(E)=[e,E]_A$ by definition of $D$.
\end{proof}

\begin{proposition}
Let $(A,[-,-]_A,\cdot_A,e,a_A)$ be a transposed $F$-algebroid and let
$E\in\Gamma(A)$. Then $(A,[-,-]_A,\cdot_E,a_A)$ is transposed. In particular,
whenever $E$ is invertible, the Dubrovin dual is again transposed.
\end{proposition}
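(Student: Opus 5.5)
The plan is to verify \eqref{eq:transposed-F} for $\cdot_E$ directly, i.e.
\[
2\,Z\cdot_E[X,Y]_A=[Z\cdot_E X,Y]_A+[X,Z\cdot_E Y]_A .
\]
The idea is that $\cdot_E$ is just $\cdot_A$ with the multiplier $Z$ replaced by $Z\cdot_A E$. Since $X\cdot_E Y=X\cdot_A Y\cdot_A E$ and $\cdot_A$ is commutative and associative, we have $Z\cdot_E W=(Z\cdot_A E)\cdot_A W$ for every section $W$. The identity above is therefore exactly \eqref{eq:transposed-F} for $\cdot_A$, written with the section $Z':=Z\cdot_A E$ in place of $Z$. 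It holds because $(\Gamma(A),\cdot_A,[-,-]_A)$ is transposed. This step needs neither the identity $e$ nor the Wronskian form of the bracket; only the transposed rule for arbitrary multipliers is used.

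It remains to check that $(A,[-,-]_A,\cdot_E,a_A)$ is still an $F$-algebroid, so that calling it transposed makes sense. Commutativity, associativity and $C^\infty(M)$-bilinearity of $\cdot_E$ are inherited from $\cdot_A$. The Lie algebroid data $([-,-]_A,a_A)$ are unchanged. Two routes are available for the Hertling--Manin relation:
\begin{itemize}
\item apply \Cref{prop:HM-free} to $\cdot_E$, using the transposed rule just established;
\item combine \Cref{prop:all-pseudoeventual}, which says $E$ is a pseudoeventual identity, with \cite[Thm.~4.3]{CruzMorales2023}.
\end{itemize}
I would use \Cref{prop:HM-free}, because it does not need the identity $e$. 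The proposition as stated does not assume one, whereas \Cref{prop:all-pseudoeventual} does.

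For the final sentence, suppose $E$ is invertible. Then by \cite[Thm.~4.4]{CruzMorales2023} the Dubrovin dual is exactly $(A,[-,-]_A,\cdot_E,a_A)$ with identity $E^{-1}$. It is therefore transposed by the first part.

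There is no real obstacle here. The only point needing care is the bookkeeping $Z\cdot_E W=(Z\cdot_A E)\cdot_A W$, together with the remark that the transposed rule is quantified over all sections $Z$, so substituting $Z\cdot_A E$ is legitimate.
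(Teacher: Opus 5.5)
Your argument is correct, and its central step is the paper's. Via $Z\cdot_E W=(Z\cdot_A E)\cdot_A W$, \eqref{eq:transposed-F} for $\cdot_E$ is just \eqref{eq:transposed-F} for $\cdot_A$ with $Z\cdot_A E$ in place of $Z$.

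Where you differ is in showing that $(A,[-,-]_A,\cdot_E,a_A)$ is an $F$-algebroid at all. The paper uses \Cref{prop:all-pseudoeventual} together with \cite[Thm.~4.3]{CruzMorales2023}. That goes through the Wronskian form of the bracket, and hence through the identity $e$. You instead check directly that $\cdot_E$ is a commutative associative algebroid, and then apply \Cref{prop:HM-free} to the transposed rule you have just proved. Your route is more self-contained and shows the first assertion needs no identity. The paper's route additionally places the result within the pseudoeventual-identity framework of \cite{CruzMorales2023}.

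One small correction: the statement does assume an identity. The element $e$ is part of the data $(A,[-,-]_A,\cdot_A,e,a_A)$ and is fixed throughout that subsection, so the paper's appeal to \Cref{prop:all-pseudoeventual} is legitimate. Your observation is rather that this hypothesis is not needed.
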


\begin{proof}
By \Cref{prop:all-pseudoeventual} the section $E$ is a pseudoeventual identity,
so $(A,[-,-]_A,\cdot_E,a_A)$ is an $F$-algebroid by
\cite[Thm.~4.3]{CruzMorales2023}. For $X,Y,Z\in\Gamma(A)$ we have
$2Z\cdot_E[X,Y]_A=2(Z\cdot_AE)\cdot_A[X,Y]_A$. Applying
\eqref{eq:transposed-F} with $Z\cdot_AE$ in place of $Z$, and using
commutativity and associativity of $\cdot_A$ in the form
$(Z\cdot_AE)\cdot_AX=Z\cdot_EX$, this equals
$[Z\cdot_EX,Y]_A+[X,Z\cdot_EY]_A$.
\end{proof}

Recall from \cite[Prop.~4.10]{CruzMorales2023} that $N=E\cdot_A-$ is a Nijenhuis
operator on $A$, with deformed bracket
$[X,Y]_E=[E\cdot_AX,Y]_A+[X,E\cdot_AY]_A-E\cdot_A[X,Y]_A$ and deformed anchor
$a_E(X)=a_A(E\cdot_AX)$.

\begin{proposition}
Let $(A,[-,-]_A,\cdot_A,e,a_A)$ be a transposed $F$-algebroid and let
$E\in\Gamma(A)$. Then $[X,Y]_E=E\cdot_A[X,Y]_A$, and the deformed
$F$-algebroid $(A,[-,-]_E,\cdot_E,a_E)$ is transposed.
\end{proposition}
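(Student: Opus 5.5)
We first identify the deformed bracket, then check the transposed Leibniz rule for the pair $(\cdot_E,[-,-]_E)$ by rewriting both sides through the original rule \eqref{eq:transposed-F}.

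\textbf{Step 1: the deformed bracket.} By definition,
\[
[X,Y]_E=[E\cdot_AX,Y]_A+[X,E\cdot_AY]_A-E\cdot_A[X,Y]_A .
\]
Applying \eqref{eq:transposed-F} with $Z=E$, the first two terms sum to $2E\cdot_A[X,Y]_A$. Hence $[X,Y]_E=E\cdot_A[X,Y]_A$. In other words, $[-,-]_E$ is the rescaled bracket \eqref{eq:rescaling} with $h=E$ on the algebra $(\Gamma(A),\cdot_A)$.

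\textbf{Step 2: the Jacobi identity.} The Jacobi identity for $[-,-]_E$ is exactly \eqref{eq:cyclic-h} with $h=E$, so $[-,-]_E$ is a Lie bracket. This is also guaranteed by the Nijenhuis theory of \cite[Prop.~4.10]{CruzMorales2023}.

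\textbf{Step 3: the $F$-algebroid structure.} The Leibniz rule with anchor $a_E$ follows from \eqref{eq:algebroid-leibniz}:
\[
E\cdot_A[X,fY]_A=fE\cdot_A[X,Y]_A+a_A(X)(f)\,E\cdot_AY .
\]
This gives the Leibniz term $a_A(X)(f)\,E\cdot_AY$. It does \emph{not} have the expected form $a_E(X)(f)\,Y=a_A(E\cdot_AX)(f)\,Y$. So we should not try to derive the Leibniz rule by hand. Instead we rely on the cited proposition, which asserts that $(A,[-,-]_E,\cdot_E,a_E)$ is an $F$-algebroid; that is part of the hypothesis being imported. The statement we must actually prove is only transposedness.

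\textbf{Step 4: transposedness.} We compute, using commutativity and associativity of $\cdot_A$:
\[
[Z\cdot_EX,Y]_E+[X,Z\cdot_EY]_E
=E\cdot_A\bigl([(Z\cdot_AE)\cdot_AX,Y]_A+[X,(Z\cdot_AE)\cdot_AY]_A\bigr).
\]
By \eqref{eq:transposed-F} with $Z\cdot_AE$ in place of $Z$, the bracketed sum equals $2(Z\cdot_AE)\cdot_A[X,Y]_A$. Therefore
\[
[Z\cdot_EX,Y]_E+[X,Z\cdot_EY]_E
=2\,E\cdot_AZ\cdot_AE\cdot_A[X,Y]_A
=2\,Z\cdot_E\bigl(E\cdot_A[X,Y]_A\bigr)
=2\,Z\cdot_E[X,Y]_E .
\]
This is exactly the transposed rule for the deformed structure.

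\textbf{Where care is needed.} The only delicate point is Step 3: ensuring that the deformed data genuinely form an $F$-algebroid, with the anchor compatibility. For that we rely on \cite[Prop.~4.10]{CruzMorales2023}, together with \Cref{prop:HM-free} for the Hertling--Manin relation, rather than reproving it. The transposed identity itself is a one-line consequence of \eqref{eq:transposed-F}.
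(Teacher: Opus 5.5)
Your proof is correct and is the paper's argument. Step 1 and Step 4 are exactly the computation given there: take $Z=E$, then apply \eqref{eq:transposed-F} with $Z\cdot_AE$ in place of $Z$. Steps 2 and 3 are extra and, like the paper, import the $F$-algebroid structure from \cite[Prop.~4.10]{CruzMorales2023}.

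The mismatch you flag in Step 3 is only apparent. Applying \eqref{eq:transposed-F} with $Z=E$ and $fY$ in place of $Y$ gives $a_A(E\cdot_AX)(f)\,Y=a_A(X)(f)\,E\cdot_AY$, so the Leibniz rule for $a_E$ can in fact be checked by hand.
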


\begin{proof}
	Taking $Z=E$ in \eqref{eq:transposed-F} gives
	$[E\cdot_AX,Y]_A+[X,E\cdot_AY]_A=2E\cdot_A[X,Y]_A$, whence
	$[X,Y]_E=E\cdot_A[X,Y]_A$. Therefore
	\[
		2Z\cdot_E[X,Y]_E=2Z\cdot_AE^2\cdot_A[X,Y]_A,
	\]
	while
	\[
		[Z\cdot_EX,Y]_E+[X,Z\cdot_EY]_E
		=E\cdot_A\bigl([Z\cdot_AE\cdot_AX,Y]_A+[X,Z\cdot_AE\cdot_AY]_A\bigr)
		=2Z\cdot_AE^2\cdot_A[X,Y]_A,
	\]
	by \eqref{eq:transposed-F} applied with $Z\cdot_AE$ in place of $Z$.
\end{proof}

\begin{remark}
The identity $[X,Y]_E=E\cdot_A[X,Y]_A$ says that the Nijenhuis deformation
attached to $E$ is the rescaling of the bracket by $E$, performed
simultaneously on the bracket and on the product. Transposedness is stable
under rescaling of the bracket alone, as already observed in
\Cref{eq:rescaling}, and the proposition shows that the two rescalings do not
interfere.
\end{remark}

Finally, the derivation construction of
\Cref{prop:zero-anchor-classification} has a pre-Lie origin, in the sense of
\cite[\S3]{CruzMorales2023}.

\begin{proposition}
	Let $(A,\cdot_A)$ be a commutative associative algebroid and let
	$D\in\Gamma(\operatorname{End}(A))$ be a derivation of $\cdot_A$. Then
	$X*Y=X\cdot_AD(Y)$ is a pre-Lie-com algebroid structure with zero anchor, and
	its subadjacent $F$-algebroid is the transposed $F$-algebroid of
	\Cref{prop:zero-anchor-classification}.
\end{proposition}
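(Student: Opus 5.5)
The plan is to verify the axioms of a pre-Lie-com algebroid directly for $X*Y=X\cdot_A D(Y)$, taking the definition from \cite[\S3]{CruzMorales2023}, and then to read off the subadjacent bracket. Three properties of $D$ drive everything: $D$ is $C^\infty(M)$-linear, being a section of $\operatorname{End}(A)$; it is a derivation of $\cdot_A$; and $\cdot_A$ is commutative, associative and $C^\infty(M)$-bilinear. Every identity below is an equality between sections and follows from these three properties.

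\emph{Step 1: anchor and tensoriality.} Since $D(fY)=fD(Y)$ and $\cdot_A$ is bilinear over functions, $X*(fY)=f(X*Y)$ and $(fX)*Y=f(X*Y)$. So the anchored Leibniz rule $X*(fY)=f(X*Y)+a(X)(f)Y$ holds with $a=0$.

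\emph{Step 2: the pre-Lie identity.} Using the derivation rule,
\[
(X*Y)*Z-X*(Y*Z)=X\cdot_A D(Y)\cdot_A D(Z)-X\cdot_A D\bigl(Y\cdot_A D(Z)\bigr)=-X\cdot_A Y\cdot_A D^2(Z).
\]
This expression is symmetric in $X$ and $Y$, so the associator is left-symmetric.

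\emph{Step 3: the compatibilities with $\cdot_A$.} I will check whichever compatibilities the definition in \cite{CruzMorales2023} imposes; I expect these to be identities of the following two types. First, the derivation rule gives
\[
X*(Y\cdot_A Z)=(X*Y)\cdot_A Z+Y\cdot_A(X*Z).
\]
Second, associativity gives
\[
(X\cdot_A Y)*Z=X\cdot_A(Y*Z).
\]
If the definition uses a variant, for instance one with a correction term of Hertling--Manin type, it should follow in the same two-line way from these formulas.

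\emph{Step 4: the subadjacent structure.} The subadjacent bracket is $X*Y-Y*X=X\cdot_A D(Y)-Y\cdot_A D(X)=[X,Y]_D$, with anchor $0$. This is exactly the structure of \Cref{prop:zero-anchor-classification}, which is already known to be a transposed $F$-algebroid.

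The only real obstacle I expect is matching the precise list of axioms in the cited definition of a pre-Lie-com algebroid. Once that list is fixed, each axiom is a short computation of the kind above.
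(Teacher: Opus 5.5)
Your proposal is correct and follows the paper's proof essentially step for step. The associator $X\cdot_A Y\cdot_A D^2(Z)$ is symmetric in $X,Y$, the $C^\infty(M)$-bilinearity of $*$ gives zero anchor, the derivation rule gives $X*(Y\cdot_A Z)=(X*Y)\cdot_A Z+Y\cdot_A(X*Z)$, and the commutator is $[-,-]_D$; your additional identity $(X\cdot_A Y)*Z=X\cdot_A(Y*Z)$ is also correct, although the paper does not use it.
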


\begin{proof}
	The associator of $*$ is
	$X*(Y*Z)-(X*Y)*Z=X\cdot_AY\cdot_AD^2(Z)$, which is symmetric in $X$ and $Y$, so
	$*$ is pre-Lie, and it is $C^\infty(M)$-bilinear, so the anchor vanishes.
	Moreover
	$X*(Y\cdot_AZ)=(X*Y)\cdot_AZ+Y\cdot_A(X*Z)$, so $(A,*,\cdot_A,0)$ is
	pre-Lie-com. Its subadjacent bracket is
	$X*Y-Y*X=X\cdot_AD(Y)-Y\cdot_AD(X)$.
\end{proof}

\section{Open problems}\label{sec:problems}

We would like to conclude this work with a list of open problems related to these topics.

\begin{problem}\label{pb:nonunital}
Every classification in this paper is obtained by evaluating
\eqref{eq:intro-transposed} at a unit, and \Cref{prop:exterior-obstruction} is
the only obstruction whose proof does not. Over a base without unit the question
is first of all one of definitions: for a non-unital commutative algebra $A$ one
has to decide which category of $A$-modules to work in, what $\Lambda^0_AL$
should be, and whether the Schouten--Rinehart bracket still extends to
$\Lambda_AL$. Once these are fixed, is the degree one obstruction
\[
	[Z,Y]\wedge X+[X,Z]\wedge Y=0,
	\qquad X,Y,Z\in L,
\]
also sufficient, and does the exterior annihilator $\operatorname{Ann}_\wedge(L)$
classify the transposed brackets which survive? A unit is needed even for the
injectivity of the correspondence of \Cref{thm:unital-odd}: on the non-unital
algebra $A=\langle\theta\rangle$ with $\theta^2=0$ and $|\theta|=-1$, the
element $\theta$ is non-zero while the associated bracket is not.
\end{problem}

\begin{problem}\label{pb:halfder}
Outside the unital case the classification has to proceed through the graded
$\frac12$-derivations of the graded Lie part, as in
\Cref{prop:no-half-derivations}. What are the graded $\frac12$-derivations of the
Lie superalgebras of Cartan type, and of the Virasoro-type algebras of
\cite{Kaygorodov2025VirasoroType}?
\end{problem}

\begin{problem}\label{pb:oddweak}
\Cref{thm:unital-odd} says that in odd degree the transposed Leibniz rule is too
strong to coexist with a non-negative grading. Is there a weaker compatibility
which retains the Gerstenhaber degree and is not vacuous? The Novikov and
anti-pre-Lie superstructures of \cite{Chen2024}, together with the bialgebra
theory of \cite{Liu2024}, are the natural candidates.
\end{problem}

\begin{problem}\label{pb:compatible}
Pairs of compatible structures are a recurrent theme in Poisson geometry,
starting from Poisson--Nijenhuis manifolds \cite{Kosmann-Schwarzbach1990}. On a
single graded commutative algebra, is there a meaningful notion of compatibility
between a Gerstenhaber bracket and a transposed bracket of degree zero, and does
it generate a hierarchy?
\end{problem}

\section*{Acknowledgements}

The author is deeply grateful to the Instituto de Matem\'atica Pura e Aplicada
(IMPA), Rio de Janeiro, for its hospitality and support throughout his visit,
during which this work was begun. The visit was made possible by the Istituto
Nazionale di Alta Matematica ``Francesco Severi'' (INdAM), which funded the
research period jointly with IMPA through the INdAM--IMPA Call 2026 for study
and scientific visits in Brazil. Particular thanks go to Prof.~Henrique
Bursztyn (IMPA) for his generous welcome and for the discussions from which
several of the questions addressed here arose. The author is a member of the
``Gruppo Nazionale per le Strutture Algebriche, Geometriche e le loro
Applicazioni'' (GNSAGA--INdAM) and he is also supported by the University of Palermo (FFR2026, UNIPA BsD).

\bibliographystyle{amsalpha}
\bibliography{references}

\end{document}